\newcommand{\BB}{{\cal B}}
\newcommand{\EE}{{\cal E}}
\newcommand{\FF}{{\cal F}}
\newcommand{\MM}{{\cal M}}
\newcommand{\WW}{{\cal W}}
\newcommand{\BR}{{\mathbb R}}
\newcommand{\BX}{{\mathbb X}}
\newcommand{\cal}{\mathcal}
\newtheorem{theorem}{\bf Theorem}[section]
\newtheorem{proposition}[theorem]{\bf Proposition}
\newtheorem{lemma}[theorem]{\bf Lemma}
\newtheorem{corollary}[theorem]{\bf Corollary}
\theoremstyle{definition}
\newtheorem{definition}[theorem]{Definition}
\newtheorem{remark}[theorem]{Remark}
\numberwithin{equation}{section}
\begin{document}

\title[Asymptotics for logistic-type equations]{Asymptotics for logistic-type equations with Dirichlet
fractional Laplace operator}

\author{Tomasz Klimsiak}

\address{Institute of Mathematics, Polish Academy of Sciences,
ul. \'{S}niadeckich 8,   00-656 Warsaw, Poland, \and Faculty of
Mathematics and Computer Science, Nicolaus Copernicus University,
Chopina 12/18, 87-100 Toruń, Poland, \\e-mail: {\tt tomas@mat.umk.pl}}

\begin{abstract}
We study the asymptotics of solutions of logistic type equations with fractional Laplacian
as  time goes to infinity  and as the exponent in nonlinear part goes to infinity.
We prove strong convergence of solutions in the energy space and uniform convergence
to the solution of an obstacle problem. As a by-product, we also prove the cut-off property for eigenvalues of  the Dirichlet fractional Laplace operator  perturbed by exploding potentials.
\end{abstract}

\maketitle
\footnote{{\bf AMS subject classifications.} Primary  35B40; Secondary 35K57, 35J61, 35R11, 35K85.
\\ {\bf Keywords.} Fractional Laplace operator, logistic equation, asymptotics, obstacle problem, Feynman-Kac formula, L\'evy process,  intrinsic ultracontractivity.
}

\section{Introduction}

Let $D\subset \BR^d$ ($d\ge 2$) be a bounded Lipschitz domain, $\varphi, b$ be  bounded positive Borel measurable functions on $D$ and $a>0$. In the present paper, we investigate asymptotics, as $p\rightarrow \infty$ and $t\rightarrow \infty$, of solutions to the following Cauchy-Dirichlet   problem
\begin{equation}
\label{eq1.3} \left\{
\begin{array}{l}\frac{d v_p}{d t}-\Delta^{\alpha} v_p= av_p-bv_p^p,\quad\mbox{in}\,\,\, D\times (0,\infty),\medskip\\
\,v_p=0,\quad\mbox{in}\,\,\, (\BR^d\setminus D)\times (0,\infty),
\medskip \\
\,v_p(0,\cdot)=\varphi,\quad\mbox{in}\,\,\, D.
\end{array}
\right.
\end{equation}
where $\alpha\in (0,1)$, and $\Delta^{\alpha}$ is the  fractional Laplacian (see Section \ref{sec2} for details).
Equations and systems of type (\ref{eq1.3}) serve as basic models in population biology. In classical models, $\alpha=1$,  the operator involved in (\ref{eq1.3}) is the usual Laplace operator. In the present paper, we concentrate on the study of (\ref{eq1.3}) with nonlocal operators, $\alpha\in (0,1)$.
In recent years, nonlocal population models attracted quite a lot  interest  (see \cite{BZ,CCR,MV,MPV,SLW} and the references therein).
They are designed to describe the  nonlocal dispersal strategy of animals.
This type of  dispersal strategy based on L\'evy flights  has been observed in nature (see, e.g., \cite{HQD,MV} for a discussion of this problem). Very recently, Caffarelli, Dipierro and Valdinoci \cite{CDV} investigated the existence problem for steady-state population model of type (\ref{eq1.3}) with additional nonlocal term on the right-hand side describing the nonlocal character of the species rate.

In the case of the classical Laplace operator, Dancer and Du \cite{DD1,DD2} proved a very interesting result stating   that
for large $p\ge 1$ the solutions of a stationary  counterpart to  (\ref{eq1.3}) behave like solutions of certain  steady-state predator-pray models. This common behaviour was described by  certain free boundary problem.

In the present paper, motivated by the results of Dancer and Du, we study the  asymptotic behaviour of solutions to (\ref{eq1.3}). We consider the following two cases:
\begin{enumerate}
\item[(i)] we pass to the limit in (\ref{eq1.3}) with $p\rightarrow +\infty$ and then with $t\rightarrow +\infty$;
\item[(ii)] we pass to the limit in (\ref{eq1.3}) with $t\rightarrow+ \infty$ and then with $p\rightarrow +\infty$.
\end{enumerate}
The most interesting part is the convergence as $p\rightarrow +\infty$ because by the known results  for the usual Laplace operator (see \cite{BM1,BM2,DO,DDM1,RT}), it is reasonable to expect that  the limit function is a solution of some free boundary problem (or, equivalently, the obstacle problem). 
This phenomenon was studied for the first time by Boccardo and  Murat  \cite{BM1} in the case of  equations with Leray-Lions type operator and with $a=0$, $b=1$. An interesting part is also the convergence as $t\rightarrow \infty$ in (i). It implies the  large-time asymptotics for an evolution obstacle problem and,  at the same time, an existence result for a stationary obstacle problem.
Asymptotics of solutions to equations  of type (\ref{eq1.3}) with classical Laplacian and general $a,b$ was investigated in \cite{DDM1,RT}.
To our knowledge,
there are no asymptotics results for (\ref{eq1.3}) with $\alpha\in(0,1)$ when $p\rightarrow \infty$ in (i) and (ii), and in (i) when $t\rightarrow \infty$. 

In the whole paper, we assume that the following hypotheses are satisfied.
\begin{enumerate}
\item[(H1)] $b\in \BB_b^+(D)$ is  nontrivial (i.e. $\int_D b\, dm>0$, where $m$ is the Lebesgue measure), there exists a  Lipschitz domain $D_0\subset D$ such that $\{b=0\}=\overline D_0$, and for every compact $K\subset D\setminus \overline D_0$,
\[
\inf_{x\in K} b(x)>0.
\]
\item[(H2)] $\varphi\in\BB^+_b(D)$ is nontrivial and $0\le \varphi\le \mathbb I_{D\setminus \overline D_0}$, where
(with the convention $\infty\cdot 0=0$)
\[
\mathbb I_{D\setminus \overline D_0}=\infty\mathbf{1}_{D_0}+\mathbf{1}_{D\setminus \overline D_0}\,.
\]
\end{enumerate}

One of the main difficulty in studies on  equations of type (\ref{eq1.3}) lies in the fact that  $b$
may vanish (the so called degenerate logistic equations).   When $b$ is  bounded away from zero,
then the term $bv_p^p$ in \eqref{eq1.3} is bounded (uniformly in $p\ge 1$) in $L^q$ norm for any $q\ge 1$, however, if we assume that $b$
is merely non-trivial, then  we are losing some control on the term $bv_p^p$, and the best  we can 
get in the limit (as $p\to\infty$) is a bounded measure. The techniques proposed and developed  by Dancer, Du and Ma in \cite{DD1,DD2,DDM2,DDM1}, and Rodrigues and   Tavares in \cite{RT}
strongly exploits 
the  properties inherent  to the Laplace operator as  locality of the operator - the evaluation of $\Delta u(x)$ depends on the values of $u$ in an arbitrarily small neighborhood of $x$ - and regularity up to the boundary of solutions to Poisson equations with smooth data and domains. Unfortunately, the fractional Laplacian does not share these properties (see \cite{RS}). Therefore, we  find ourselves forced to propose a new method of studying (\ref{eq1.3}). It  combines the techniques used in the case of the classical Laplacian with
some new technics based on the probabilistic potential theory and stochastic analysis.
Considered  method allows us  to handle  asymptotics for  \eqref{eq1.3}  with irregular data and domain, and  also  to get pointwise convergence in asymptotics results.
The last property was never investigated in the literature in the case of degenerate logistic type equations.
The method  we apply here   depends upon the knowledge that $\Delta^{\alpha}$ is symmetric, strongly Feller
and intrinsic ultracontractive, therefore the results of the paper may be easily extended to a much  broader class of operators
(including classical Laplacian). The technique  seems to be    very powerful as evidenced by the fact that
when applied to classical Laplacian, it gives stronger results than in \cite{DD1,DD2,DDM1,RT} (this is so, among others,
because we do not use the Hopf lemma, which requires high regularity of the boundary of $D$). 
Note that in the paper \cite{RT} devoted to evolution equations, the  authors assume in addition that
 $D,D_0$ are smooth domains, and in \cite{DDM1} devoted to elliptic equations the authors assume in addition that
$D,D_0$ are smooth, $b$ is continuous and $\overline D_0\subset D$.

As for  (i),  we prove that if $v$ is a  unique  solution of the  parabolic obstacle problem
\begin{equation}
\label{eq1.4}
 \left\{
\begin{array}{l}
\max\big\{ \frac{d v}{d t}-\Delta^{\alpha} v-av, v-\mathbb I_{D\setminus\overline D_0}\big\}=0,\quad\mbox{in}\,\,\, D\times (0,\infty),
\medskip\\
\,v=0,\quad\mbox{in}\,\,\, D^c\times (0,\infty),
\medskip\\
 v(0,\cdot)=\varphi,\quad\mbox{in}\,\,\, D,
 \end{array}
\right.
\end{equation}
then for all $T>0$ and $\delta\in (0,T]$,
\begin{equation}
\label{eq1.a}
\sup_{\delta\le t\le T}\|v_p(t)-v(t)\|_\infty+\int_0^T\|v_p(t)-v(t)\|_{H^{\alpha}_0(D)}\,dt \rightarrow 0\quad \mbox{as }p\rightarrow \infty.
\end{equation}
Moreover, if $\varphi\in C_0(D)$, then (\ref{eq1.a}) holds with $\delta=0$.
We then show that for  every  $a\in (\lambda_1^D, \lambda_1^{D_0})$, where
$\lambda_1^D$ (resp. $\lambda_1^{D_0}$) denotes the principal eigenvalue of the operator $-\Delta^{\alpha}$ with zero exterior Dirichlet condition on $D^c$ (resp. $D_0^c$),
there exists a (unique)  solution
$u$ of the  elliptic obstacle problem
\begin{equation}
\label{eq1.2} \left\{
\begin{array}{l}\max\big\{-\Delta^{\alpha} u-au,u- \mathbb I_{D\setminus \overline D_0}\big\}=0,\quad \mbox{in}\,\,\, D,
\medskip\\ u=0,\quad\mbox{in}\,\,\, D^c,
\medskip\\
u>0,\quad \mbox{on}\,\,\, D,
\end{array}
\right.
\end{equation}
and
\begin{equation}
\label{eq1.b}
\|v(t)-u\|_\infty+\|v(t)-u\|_{H^{\alpha}_0(D)} \rightarrow 0\quad \mbox{as }t\rightarrow \infty.
\end{equation}
As a matter of fact, in the present paper, we only show the existence of $u$. The uniqueness problem for (\ref{eq1.2}) is a separate difficult issue. It is solved
in \cite{K:arx1} (see also \cite{DDM2} for the case of the classical Laplacian).

As for problem (ii), we show that there exists a solution $u_p$ of the problem
\begin{equation}
\label{eq1.1} \left\{
\begin{array}{l}-\Delta^{\alpha} u_p= au_p-bu_p^p,\quad\mbox{in}\,\,\, D,\medskip \\
\,u=0,\quad\mbox{in}\,\,\, D^c,
\medskip \\
\,u_p>0\quad \mbox{on}\,\,\,  D
\end{array}
\right.
\end{equation}
if and only if $a\in (\lambda_1^D,\lambda_1^{D_0})$, and that for $a$ satisfying this condition,
\begin{equation}
\label{eq1.c}
\|v_p(t)-u_p\|_\infty+\|v_p(t)-u_p\|_{H^{\alpha}_0(D)} \rightarrow 0\quad \mbox{as }
t\rightarrow \infty.
\end{equation}
We next show that  for every $a\in (\lambda_1^D,\lambda_1^{D_0})$,
\begin{equation}
\label{eq1.d}
\|u_p-u\|_\infty+\|u_p-u\|_{H^{\alpha}_0(D)} \rightarrow 0\quad \mbox{as }p\rightarrow \infty,
\end{equation}
where $u$ is a solution to (\ref{eq1.2}).
The uniform convergence in (\ref{eq1.a}) and (\ref{eq1.d})  has been considered before in the literature only in the case
when $b\ge c$ for some constant $c>0$. For the proof in the general case, we combine the analytic  methods of \cite{BM1,RT}  with the Feynman-Kac representation and  some  methods of stochastic analysis and probabilistic potential theory.
In the proof of the asymptotics as $t\rightarrow\infty$, we merge  the  techniques introduced in \cite{SZ} with the probabilistic ones  introduced in \cite{K:JEE1}.
Note here that under additional regularity conditions on $b, D, D_0, \varphi$ the large time behaviour of  solutions to   (\ref{eq1.3}) and (\ref{eq1.4}) with classical Laplace operator was studied in \cite{DG,DY,FKMLG,RT}.

The method  we propose  in the paper  is built  on three pillars:  the Feynman-Kac representation of solutions to the mentioned problems
(thanks to which, among others, we achieve uniform convergences), the notion of intrinsic ultracontractivity 
which stands as a substitute of the Hopf lemma (as a byproduct, we may consider less regular domains), and the following 
result which plays a pivotal role in our proofs of the energy
estimates for solutions to (\ref{eq1.3}):
\begin{equation}
\label{eq1.1.1.1}
\lambda^D_1(-\Delta^{\alpha}+q_k)\nearrow \lambda_1^{V}\quad\mbox{as }k\rightarrow\infty.
\end{equation}
Here $V\subset D$ is a bounded {\em  Kac regular} domain (any Lipschitz domain  is Kac regular, see Proposition \ref{prop.ciea}) and  $\lambda^V(-\Delta^{\alpha}+q_k)$ is the principal eigenvalue of the operator $-\Delta^{\alpha}+q_k$ with zero exterior Dirichlet  condition on $V^c$. Furthermore, $\{q_k\}$ is an increasing sequence of bounded positive measurable functions on $D$ such that supp$[q_k]\subset D\setminus \overline V$ and
\[
\forall\,K\subset D\setminus \overline V,\, K\mbox{-compact}\qquad  \inf_{x\in K}q_k(x)\nearrow \infty.
\]
Similar result, but for classical Dirichlet Laplacian, was proved in \cite{FKMLG} under very restrictive smoothness assumptions on the domain.

It is worth mentioning that there is a rich series  of papers (see \cite{BZ,CCR} and the references therein) devoted to  nonlocal
logistic equations of the form (\ref{eq1.3}) but with $\Delta^{\alpha}$ replaced by a nonlocal operator $A$ of the form
\begin{equation}
\label{eq1.i.1}
-Au(x)=\int_DJ(x,y)(u(x)-u(y))\,dy
\end{equation}
with  some strictly positive symmetric kernel $J\in C(\overline D\times \overline D)$. By the very definition
of the fractional Laplacian representation \eqref{eq1.i.1} holds for $A=\Delta^{\alpha}$ (with the principal value integral)
but for $J(x,y)\sim |x-y|^{-d-2\alpha}$, which clearly does not belong to $C(\overline D\times \overline D)$.

\section{Preliminary results}
\label{sec2}

Throughout the paper, we assume that $d\ge2$ and $D\subset\BR^d$  is a  bounded Lipschitz domain.
We let   $m$ denote the Lebesgue measure on $\BR^d$.  We denote by $\BB(D)$ the $\sigma$-field of Borel subsets of $D$. $\BB_b(D)$ (resp. $\BB^+(D)$) is the set of real bounded (resp. positive) Borel measurable functions on $D$.  $\BB^{+}_b(D)=\BB_b(D)\cap\BB^+(D)$.

\subsection{Dirichlet fractional Laplacian and related Sobolev spaces}
\label{sec2.1}
For any $u\in C^2_b(\BR^d)$ we let
\begin{align}
\label{eq.flsid}
\nonumber
\Delta^{\alpha}u(x)&:=c_{\alpha,d}\lim_{r\rightarrow 0^+}\int_{\BR^d\setminus B(x,r)}\frac{u(y)-u(x)}{|y-x|^{d+2\alpha}}\,dy
\\&=\frac{c_{\alpha,d}}{2}\int_{\BR^d}\frac{u(x+y)+u(x-y)-2u(x)}{|y|^{d+2\alpha}}\,dy
\end{align}
with $c_{\alpha,d}=[4^{\alpha}\Gamma(\frac{d+2\alpha}{2})]/[\pi^{d/2}\Gamma(-\alpha)]$.

Let us consider the Dirichlet form  $(\EE,D(\EE))$  on $L^2(\BR^d;m)$ defined as
\[
D(\EE)=\{u\in L^2(\BR^d;m): \int_{\BR^d}|\xi|^{2\alpha}|\hat u(\xi)|^2\,d\xi<\infty\},\quad \EE(u,v)=\int_{\BR^d}|\xi|^{2\alpha}\hat u(\xi)\bar{\hat v}(\xi)\,d\xi.
\]
Here $\hat u$ stands for the Fourier transform of $u$.
By \cite[Proposition 3.4, Theorem 6.5]{DPV} (see also \cite[Lemma 3.15]{McLean}) $D(\EE)=H^{\alpha}(\BR^d)$,  and there exist
$C_1, C_2>0$ such that $C_1\|u\|_{\EE}\le\|u\|_{H^{\alpha}(\BR^d)}\le C_2\|u\|_{\EE},\, u\in D(\EE)$, where $\|u\|_\EE=\sqrt{\EE(u,u)}$, and 
\begin{equation}
\label{eq2.1.dffhs1}
H^{\alpha}(\BR^d)=\{u\in L^2(\BR^d;m): \|u\|_{H^{\alpha}(\BR^d)}:=[u]^{1/2}_{H^{\alpha}(\BR^d)}+\|u\|_{L^2(\BR^d;m)}<\infty\}
\end{equation}
with
\begin{equation}
\label{eq2.1.dffhs2}
[u]_{H^{\alpha}(\BR^d)}:= \int_{\BR^d}\int_{\BR^d}\frac{|u(x)-u(y)|^2}{|x-y|^{d+2\alpha}}\,dx\,dy.
\end{equation}
Moreover, by \cite[Proposition 3.3]{DPV}
\begin{equation}
\label{eq2.1.dffhs3}
\EE(u,v)=c_{\alpha,d}\int_{\BR^d}\int_{\BR^d}\big(u(x)-u(y)\big)\big(v(x)-v(y)\big)|x-y|^{-d-2\alpha}\,dx\,dy,\quad u,v\in D(\EE).
\end{equation}
By \cite[Sections 1.3, 1.4]{FOT} there exists a unique self-adjoint operator $(A,D(A))$ such that $D(A)\subset D(\EE)$, and 
\[
\EE(u,v)=(-Au,v),\quad u\in D(A),v\in D(\EE).
\]
From this relation and \eqref{eq2.1.dffhs3} we infer that $C^2_c(\BR^d)\subset D(A)$, and  for any $u\in C_c^2(\BR^d)$
\[
Au(x)=\Delta^{\alpha}u(x),\quad  \mbox{a.e.}\,\,\, x\in\BR^d.
\]

Let $\mbox{Cap}$ be the capacity naturally associated with the form $\EE$ (see \cite[Section 2.1]{FOT}). We say that a property  holds $\EE$-q.e. if it holds outside a set of capacity $\mbox{Cap}$ zero. We say that a function $u$
on $\BR^d$ is $\EE$-quasi-continuous if for every $\varepsilon>0$ there exists a closed set $F_\varepsilon$ such that $\mbox{Cap}(\BR^d\setminus F_\varepsilon)\le\varepsilon$ and $u_{|F_\varepsilon}$ is continuous. It is well known (see \cite[Theorem 2.1.3]{FOT}) that each $u\in D(\EE)$ has an $\EE$-quasi-continuous $m$-version, which in the sequel will be denoted by $\tilde u$.

By $\{T_t,\, t\ge 0\}$ (resp. $\{J_\beta,\, \beta\in \rho(A)\}$), we denote the semigroup (resp. resolvent)
generated by $A$.

We let   $(\EE_D,D(\EE_D))$ denote a form (called the part of $(\EE,D(\EE))$  on $D$) defined by
\[
D(\EE_D)=\{u\in D(\EE):\tilde u=0\,\, \EE\mbox{-q.e.}\,\,\mbox{on}\,\, \BR^d\setminus D\},\quad \EE_D(u,v)=\EE(u,v),\quad u,v\in D(\EE_D).
\]
By \cite[Theorem 4.4.3]{FOT}, $(\EE_D,D(\EE_D))$ is a regular Dirichlet form on $L^2(D;m)$. Therefore (see \cite[Sections 1.3, 1.4]{FOT}) there exists a unique self-adjoint negative definite operator $(A_D,D(A_D)$ on $L^2(D;m)$ such that
\[
D(A_D)\subset D(\EE_D),\quad \EE_D(u,v)=(-A_D u,v),\quad u\in D(A_D),\,v\in D(\EE_D)
\]
(here $(\cdot,\cdot)$ stands for the usual scalar product in $L^2(\BR^d;m)$). We put
\[
(\Delta^{\alpha})_{|D}:=A_D.
\]
The operator $(\Delta^{\alpha})_{|D}$ is called the  Dirichlet fractional Laplacian. 
Let $H^{\alpha}(D), [u]_{H^{\alpha}(D)}$ be defined as in \eqref{eq2.1.dffhs1}, \eqref{eq2.1.dffhs2}
but with $\BR^d$ replaced by $D$. Let
\[
H^{\alpha}_0(D):=\overline{C^\infty_c(D)}^{H^{\alpha}(D)}=\overline{C^\infty_c(D)}^{H^{\alpha}(\BR^d)}.
\]
The last equation follows from \cite[Lemma 5.1]{DPV}. On the other hand, since $(\EE_D,D(\EE_D))$
is regular, then 
\[
D(\EE_D)=\overline{C^\infty_c(D)}^{\|\cdot\|_{\EE_D}}=\overline{C^\infty_c(D)}^{\|\cdot\|_{\EE}}.
\]
The last equation follows from the definition of the form  $(\EE_D,D(\EE_D))$. By the equivalence of the norms $\|\cdot\|_{\EE}$
and $\|\cdot\|_{H^{\alpha}(\BR^d)}$, we conclude that $D(\EE_D)=H^{\alpha}_0(D)$. Observe that by \eqref{eq2.1.dffhs3}
for any $u\in D(\EE_D)$,
\[
\EE_D(u,u)=c_{\alpha,d}[u]_{H^{\alpha}(D)}+c_{\alpha,d}\int_Du^2(x)c(x)\,dx,
\]
where 
\[
c(x)=2\int_{\BR^d\setminus D}\frac{1}{|x-y|^{d+2\alpha}}\,dy,\quad x\in D.
\]
Thus, by \cite[Theorem 6.7]{DPV}, there exists $c_1>0$ such that $c_1\|u\|^2_{H^{\alpha}(D)}\le \EE_D(u,u),\, u\in D(\EE_D)$.
On the other hand for any $u\in C_c^\infty(D)$, by the equivalence of the norms $\|\cdot\|_{\EE}$
and $\|\cdot\|_{H^{\alpha}(\BR^d)}$, and  \cite[Theorem 5.4]{DPV}, we have
\[
\EE_D(u,u)=\EE(u,u)\le C_3\|u\|^2_{H^{\alpha}(\BR^d)}\le C_4\|u\|^2_{H^{\alpha}(D)}.
\]
Consequently, there exist $c_1, c_2>0$ such that
\begin{equation}
\label{eq2.1.equiv1}
c_1\|u\|_{\EE_D}\le \|u\|_{H^{\alpha}(D)}\le c_2\|u\|_{\EE_D},\quad u\in D(\EE_D)=H^{\alpha}_0(D).
\end{equation}
Recall here (see e.g. \cite[Corollary 7.2]{DPV}) that for any $p\in [1,2d/(d-2\alpha))$,
\begin{equation}
\label{eq2.1.equiv15r}
 H^{\alpha}(D) \subset L^p(D;m),
\end{equation}
and the embedding is compact.

As in the case of the form $(\EE,D(\EE))$, one can define  capacity $\mbox{Cap}_D$ and the notions of $\EE_D$-exceptional sets and  $\EE_D$-quasi-continuity.  We will drop $\EE_D$ in the notation if it will be clear from the context which Dirichlet form is considered. Note, however, that on the set $D$ both capacities, i.e. $\mbox{Cap}_D$ and $\mbox{Cap}$ are equivalent,  and the notions of  $\EE_D$-quasi-continuity and  $\EE$-quasi-continuity agree (see \cite[Theorem 4.4.3]{FOT}).

\subsection{Probabilistic potential theory}
\label{sec2.2}

Let  $\mathbb X= ((X_t)_{t\ge0}, (P_x)_{x\in\BR^d},(\FF_t)_{t\ge0})$ be a rotation invariant $\alpha$-stable  L\'evy process
associated with $(\EE,D(\EE))$ in the sense that  for any positive Borel function $f\in L^2(E;m)$,
\[
P_tf(x):=\mathbb E_xf(X_t)=T_t f(x)\quad \mbox{a.e. }x\in\BR^d,
\]
where $\mathbb E_x$ denotes the expectation  with respect to the measure $P_x$. It is well known (see e.g. \cite[Proposition I.2.5]{Bertoin} and \cite[Exercise 4, page 39]{Bertoin}) that such a process is doubly Feller, i.e. it is strongly Feller: $P_t(\BB_b(\BR^d))\subset C_b(\BR^d),\, t>0$, and it is Fellerian: $P_t(C_0(\BR^d))\subset C_0(\BR^d),\, t>0$. Here $C_b(\BR^d)$ is the set of bounded continuous functions on $\BR$, and $C_0(\BR^d)$ is the set of continuous functions on $\BR^d$ vanishing at infinity. We denote by $\mathbb X^D$ the process $\mathbb X$ killed upon exiting  $D$. It is known that $\BX^D$ is associated with the form $(\EE_D,D(\EE_D))$ (see \cite[Theorem 4.4.2]{FOT}). This means that for any positive Borel  $f\in L^2(D;m)$,
\begin{equation}
\label{eq.ppt1}
P^D_tf(x):=\mathbb E_x [f(X_t)\mathbf{1}_{t<\tau_D}]=T^D_t f(x),\quad \mbox{a.e.}\,\,\, x\in D,
\end{equation}
\begin{equation}
\label{eq.ppt12}
R^D_\beta f(x):=\mathbb E_x\int_0^{\tau_D} e^{-\beta r}f(X_r)\,dr=J^D_\beta f(x),\quad \mbox{a.e.}\,\,\, x\in D,
\end{equation}
where
\[
\tau_D=\inf\{t>0: X_t \in\BR^d\setminus D\}.
\]
Here $(T^D_t)$ is a Markov semigroup generated by $(\Delta^{\alpha})_{|D}$ on $L^2(D;m)$, and $(J^D_\beta)$ is its resolvent (note that $[0,\infty)$ is included in the resolvent set of $(\Delta^{\alpha})_{|D}$).
We denote by $G_{D,\beta}$ the   $\beta$-Green function for the operator $-(\Delta^{\alpha})_{|D}$, and by $p_D$  its transition function (see \cite[Exercise 4.2.1, Lemma 4.2.4]{FOT} for details). By the definition, for any $f\in \BB^+(D)$,
\begin{equation}
\label{eq.dfl1}
P^D_tf=\int_D f(y)p_D(t,\cdot,y)\,dy,\quad R^D_\beta f=\int_DG_{D,\beta}(\cdot,y)f(y)\,dy
\quad  x\in D.
\end{equation}
We write $G_D= G_{D,0}$. For a positive Borel measure $\mu$ on $D$, we set
\[
P^D_t\mu(x)=\int_D p_D(t,x,y)\,\mu(dy),\qquad R^D_\beta \mu(x)= \int_D G_{D,\beta}(x,y)\,\mu(dy)
\]
and $R^D=R^D_0$. By \cite[Theorem 4.2.3]{FOT}, for any $f\in \BB(D)\cap L^2(D;m)$ we have 
\begin{equation}
\label{eq.dfl101}
P_t^Df=\widetilde{T^D_t f},\quad \mbox{q.e.}\,\,\, x\in D,\, t>0,\qquad  R^D_\beta f=\widetilde{J^D_\beta f},\quad\mbox{q.e.}\,\,\, x\in D,\, \beta>0.
\end{equation}
It is well known (see e.g. \cite[Lemma 2.1]{Grzywny}), that
\begin{equation}
\label{eq.estgfu}
\sup_{x\in D}R^D1(x)<\infty
\end{equation}
It is also well known (see e.g. \cite[Section 4.2]{Kwasnicki}) that there exists $c>0$ (depending on $\alpha, d$) such that 
\begin{equation}
\label{eq.estgfu1}
G_D(x,y)\le \frac{c}{|x-y|^{d-2\alpha}},\quad x,y\in D.
\end{equation}
This in turn implies that 
\begin{equation}
\label{eq.estgfu2}
\sup_{x\in D}\|G_D(x,\cdot)\|_{L^p(D;m)}<\infty,\quad \mbox{for}\,\,\, p\in [1,d/(d-2\alpha)).
\end{equation}

We say that a Borel measure $\mu$ on $D$ is $\EE_D$-smooth if $|\mu| \ll\mbox{Cap}_D$ (i.e. $\mu$ charges no set of capacity $\mbox{Cap}_D$ zero)
and there exists an increasing sequence $\{F_n\}$ of closed subsets of $D$
such that $|\mu|(F_n)<\infty$, $n\ge 1$, and $\mbox{Cap}_D(K\setminus F_n)\rightarrow 0$ as $n\rightarrow\infty$ for every compact set $K\subset D$.

We let $\MM_{0,b}(D)$  be the space of all bounded $\EE_D$-smooth measures on $D$. We say that  an $\EE_D$-smooth measure $\mu$ belongs to the class $S_0(D)$ if there exists $c>0$ such that
\begin{equation}
\label{eq.ppt2}
\int_D|\tilde u|\,d|\mu|\le c\sqrt{\EE_D(u,u)},\quad u\in D(\EE_D).
\end{equation}
In the light of \eqref{eq2.1.equiv1}, $S_0(D)\subset H^{-\alpha}(D)$. 
We denote by  $\|\mu\|_{H^{-\alpha}(D)}$ the smallest  $c\ge 0$ such that \eqref{eq.ppt2} holds.
It is well known that there is a one-to-one correspondence  (Revuz duality) between positive $\EE_D$-smooth measures and positive continuous additive functionals (PCAFs for short)
of $\mathbb X^D$ (see \cite[Theorem 5.1.4]{FOT}).
By \cite[Theorem 5.1.3]{FOT}, if $A^\mu$ is the unique PCAF associated with a positive smooth measure $\mu$, then for every positive Borel function $f$ on $D$,
\begin{equation}
\label{eq.ppt3}
\mathbb E_x\int_0^{\tau_D} e^{-\beta r}f(X_r)\,dA^\mu_r=R^D_\beta(f\cdot \mu)(x)\quad \mbox{q.e. }x\in D,
\end{equation}
where $f\cdot \mu$ is a positive Borel measure  such that $d(f\cdot\mu)/d\mu=f$.

In what follows, if there is no ambiguity, in the notation we drop the prefix $\EE_D$.

\begin{lemma}
\label{lm2.4.1}
Assume that  $u\in\BB(D)$ and $\mu\in\mathcal M_{0,b}(D)$. Then
\[
u(x)=R^D\mu(x)
\]
for q.e. $x\in D$ if and only if there exists a process $M$ with $M_0=0$ such that $M$ is a uniformly integrable martingale on $[0,\tau_D]$ under the measure $P_x$ for q.e. $x\in \BR^d$, and 
\[
u(X_t)=\int_t^{\tau_D}\,dA^\mu_r-\int_t^{\tau_D}\,dM_r,\quad t\in [0,\tau_D],\quad P_x\mbox{-a.s.},\quad \mbox{q.e.}\,\,\, x\in D.
\]
\end{lemma}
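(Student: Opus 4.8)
The plan is to derive both implications from the Revuz duality \eqref{eq.ppt3} and the Markov property of $\mathbb X^D$. Write $\mu=\mu^+-\mu^-$ with positive $\mu^\pm\in\mathcal M_{0,b}(D)$ and let $A^{\mu^\pm}$ be the associated PCAFs, so that $A^\mu=A^{\mu^+}-A^{\mu^-}$; throughout, $u$ is regarded as extended by $0$ on $D^c$, so that $u(X_{\tau_D})=0$ (for $x\in D^c$ the whole statement is trivial since $\tau_D=0$ $P_x$-a.s.). The preliminary fact needed in both directions is that $\mathbb E_x A^{|\mu|}_{\tau_D}=R^D|\mu|(x)<\infty$ for q.e. $x\in D$: by \eqref{eq.ppt3} with $\beta=0$ and $f\equiv1$ one has $\mathbb E_x A^{|\mu|}_{\tau_D}=R^D|\mu|(x)$ q.e., while by Tonelli and \eqref{eq.estgfu}
\[
\int_D R^D|\mu|\,dm=\int_D R^D1\,d|\mu|\le\Big(\sup_{x\in D}R^D1(x)\Big)|\mu|(D)<\infty,
\]
so $R^D|\mu|$ is $m$-a.e. finite; being an excessive function of $\mathbb X^D$, it is then finite q.e. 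In particular $A^{\mu^\pm}_{\tau_D}\in L^1(P_x)$ for q.e. $x$, and $\mathbb E_x A^\mu_{\tau_D}=R^D\mu(x)$ q.e.

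For the implication "$\Rightarrow$", assume $u=R^D\mu$ q.e. and set $M_t:=\mathbb E_x[A^\mu_{\tau_D}\mid\FF_t]-u(x)$. This is a closed, hence uniformly integrable, $(\FF_t)$-martingale, it is constant on $[\tau_D,\infty)$ (because $A^\mu_{\tau_D}$ is $\FF_{\tau_D}$-measurable), and $M_0=R^D\mu(x)-u(x)=0$ for q.e. $x$. The key step is the identity
\[
\mathbb E_x\big[A^\mu_{\tau_D}\mid\FF_t\big]=A^\mu_t+u(X_t),\qquad 0\le t\le\tau_D,
\]
$P_x$-a.s. for q.e. $x$. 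On $\{t<\tau_D\}$ one writes $A^\mu_{\tau_D}-A^\mu_t=A^\mu_{\tau_D}\circ\theta_t$ using $\tau_D=t+\tau_D\circ\theta_t$ and the additivity of the PCAF, then applies the Markov property to get $\mathbb E_x[A^\mu_{\tau_D}\circ\theta_t\mid\FF_t]=\mathbb E_{X_t}A^\mu_{\tau_D}=R^D\mu(X_t)$, and finally uses that $\mathbb X^D$ does not hit the polar set on which $u\ne R^D\mu$ to replace $R^D\mu(X_t)$ by $u(X_t)$; on $\{t=\tau_D\}$ both sides equal $A^\mu_{\tau_D}$, since $u(X_{\tau_D})=0$. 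Right-continuity of the processes upgrades the identity to one valid simultaneously for all $t\in[0,\tau_D]$. Hence $M_t=A^\mu_t+u(X_t)-u(x)$ on $[0,\tau_D]$, and
\[
\int_t^{\tau_D}dM_r=M_{\tau_D}-M_t=(A^\mu_{\tau_D}-A^\mu_t)-u(X_t)=\int_t^{\tau_D}dA^\mu_r-u(X_t),
\]
which rearranges to the asserted representation.

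For the implication "$\Leftarrow$", take $t=0$ in the representation: since $A^\mu_0=M_0=0$ we obtain $u(x)=u(X_0)=A^\mu_{\tau_D}-M_{\tau_D}$, $P_x$-a.s. for q.e. $x$. Taking $\mathbb E_x$, using that $M$, being a uniformly integrable martingale on $[0,\tau_D]$, satisfies $\mathbb E_xM_{\tau_D}=M_0=0$ by optional sampling, and using $\mathbb E_xA^\mu_{\tau_D}=R^D\mu(x)$ from the preliminary step, we conclude $u(x)=R^D\mu(x)$ for q.e. $x\in D$.

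I expect the main obstacle to be the conditional-expectation identity $\mathbb E_x[A^\mu_{\tau_D}\mid\FF_t]=A^\mu_t+u(X_t)$: the bookkeeping at the terminal time $\tau_D$, the identification of $A^\mu_{\tau_D}-A^\mu_t$ with a functional shifted by $\theta_t$, and the passage from the $m$-a.e. identity $u=R^D\mu$ to a pointwise identity along the trajectories via the fact that $\mathbb X^D$ charges no polar set. Hand in hand with this is the a priori q.e. finiteness of $R^D|\mu|$, which is what makes $M$ a genuine uniformly integrable martingale rather than merely a local one. Once these points are settled, the signed decomposition $\mu=\mu^+-\mu^-$ contributes only routine bookkeeping.
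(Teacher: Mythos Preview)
The paper does not prove this lemma at all; it simply cites \cite[Theorem 4.7]{KR:JFA}. Your proposal, by contrast, supplies a self-contained argument via Revuz duality and the strong Markov property, which is the standard route and essentially what one would expect to find behind the citation. The argument is correct: the preliminary q.e.\ finiteness of $R^D|\mu|$ is exactly what makes $A^\mu_{\tau_D}\in L^1(P_x)$ and hence $M$ a genuine closed martingale; the conditional-expectation identity follows from additivity of the PCAF together with $\tau_D=t+\tau_D\circ\theta_t$ on $\{t<\tau_D\}$; and the replacement of $R^D\mu(X_t)$ by $u(X_t)$ is justified because the exceptional set $\{u\ne R^D\mu\}$ is polar and the Hunt process $\mathbb X^D$ does not hit polar sets. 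The one point you should make slightly more explicit is that right-continuity of $t\mapsto u(X_t)$ on $[0,\tau_D)$ comes not from any assumed regularity of $u$ but from the fact that $R^D\mu^\pm$ are excessive (hence finely continuous), so that $R^D\mu(X_t)$ is c\`adl\`ag; combined with the c\`adl\`ag version of the closed martingale $N_t=\mathbb E_x[A^\mu_{\tau_D}\mid\FF_t]$ and the continuity of $A^\mu$, this upgrades the identity from fixed $t$ to all $t\in[0,\tau_D]$ simultaneously. With that caveat, your proof is complete and strictly more informative than the paper's one-line reference.
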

\begin{proof}
See \cite[Theorem 4.7]{KR:JFA}.
\end{proof}

\subsection{Regular domains}

We say that a bounded domain $D\subset \BR^d$ is {\em Dirichlet regular} if for every $x\in \BR^d\setminus D$,
\[
P_x(\tau_D>0)=0.
\]
To see that any Lipschitz domain is Dirichlet regular, we first recall the notion of the base of a set $A\subset \BB(\BR^d)$ (see \cite[Section VI.4]{BH}).
Let
\[
b(A)=\{x\in\BR^d: P_x(\sigma_A=0)=1\},\quad \sigma_A=\inf\{t>0: X_t\in A\}.
\]
We have  that  for any $A,B\subset \BB(\BR^d)$,
\begin{equation}
\label{eq4.2}
b(A\cup B)=b(A)\cup b(B).
\end{equation}
Indeed, the inclusion $b(A)\cup b(B)\subset b(A\cup B)$
is clear (since $\sigma_{A\cup B}\le \sigma_A\wedge \sigma_B$). For the reverse inclusion, observe that $\sigma_{A\cup B}=0$
implies that $\sigma_A=0$ or $\sigma_B=0$. Therefore, if $x\in b(A\cup B)$, then $P_x(\sigma_A=0)>0$ or $P_x(\sigma_B=0)>0$.
By  Blumenthal's zero–one law this implies that $P_x(\sigma_A=0)=1$ or $P_x(\sigma_B=0)=1$. Thus, $x\in b(A)\cup b(B)$.
 
The following reasoning is taken from \cite[Example VI.4.7.4, page 276]{BH}. Let $x\in \partial D$, and $C_x$ be an open exterior cone at $x$. There exist rotations $\Phi_1,\dots \Phi_m:\BR^d\rightarrow \BR^d$
 such that $\overline D^c \setminus\{x\}=\bigcup_{i=1}^m\Phi_i(\overline D^c\cap C_x)$. Clearly, $x\in b(\overline D^c\setminus\{x\})$, so by (\ref{eq4.2}) there exists $i_0\in \{1,\dots,m\}$ such that $x\in b(\Phi_{i_0}(\overline D^c\cap C_x))$. By the rotation invariance of $\mathbb X$, $x\in b(\overline D^c\cap C_x)$.  Thus, $P_x(\sigma_{\overline D^c\cap C_x}=0)=1$.
Consequently, 
\begin{equation}
\label{eq.swo}
P_x(\sigma_{\overline D^c}=0)=1,\quad x\in\partial D.
\end{equation}
 Hence,   $P_x(\tau_{ D}>0)=0,\, x\in D^c$. We see that exterior cone condition is sufficient
 for Dirichlet regularity of $D$. In particular Lipschitz domains are Dirichlet regular.

We say that  a domain $D$ is {\em Kac regular} if
\begin{equation}
\label{eq.rd1}
P_x(\tau_D=\tau_{\overline D})=1\quad x\in D.
\end{equation}

\begin{proposition}
\label{prop.ciea}
If $D$ is a bounded Lipschitz domain, then $D$ is Kac regular.
\end{proposition}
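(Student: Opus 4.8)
The plan is to show that for a bounded Lipschitz domain $D$ we have $P_x(\tau_D = \tau_{\overline D}) = 1$ for every $x \in D$. Since $\tau_D \le \tau_{\overline D}$ always holds (because $D \subset \overline D$, so the process must leave $\overline D$ no earlier than it leaves $D$), it suffices to prove $P_x(\tau_D \ge \tau_{\overline D}) = 1$, i.e. that the process $\mathbb X$, at the moment it first exits $D$, is in fact in $\overline D^c$ (and not merely on $\partial D$) with probability one, and moreover cannot ``linger'' on $\partial D$. Concretely, it is enough to show that $X_{\tau_D} \in \overline D^c$ $P_x$-a.s., since on that event $\tau_{\overline D} \le \tau_D$ trivially. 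By the strong Markov property at $\tau_D$, and since $X_{\tau_D} \in \partial D$ on the complementary event, the key point reduces to the following: for $P_x$-a.e.\ path, if $X_{\tau_D} \in \partial D$, then immediately after $\tau_D$ the process enters $\overline D^c$ instantaneously; applying the strong Markov property, this is controlled by the quantity $P_z(\sigma_{\overline D^c} = 0)$ for $z \in \partial D$.

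The main input is exactly equation \eqref{eq.swo}, established in the discussion preceding the proposition: for every $z \in \partial D$ one has $P_z(\sigma_{\overline D^c} = 0) = 1$, which followed from the exterior cone condition for Lipschitz domains together with the decomposition property \eqref{eq4.2} of bases and the rotation invariance of $\mathbb X$. So the argument runs as follows. First I would write, using the strong Markov property at time $\tau_D$,
\[
P_x(\tau_{\overline D} > \tau_D) = \mathbb E_x\big[\mathbf{1}_{X_{\tau_D}\in\partial D}\, P_{X_{\tau_D}}(\sigma_{\overline D^c} > 0)\big],
\]
where I have used that $\tau_{\overline D} > \tau_D$ forces $X_{\tau_D} \in \overline D \setminus \overline D^c = \overline D$, hence (since $X_{\tau_D}\notin D$) $X_{\tau_D}\in\partial D$, and that after $\tau_D$ the remaining time to hit $\overline D^c$ equals $\sigma_{\overline D^c}$ evaluated at the post-$\tau_D$ path started from $X_{\tau_D}$. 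By \eqref{eq.swo}, $P_z(\sigma_{\overline D^c} > 0) = 0$ for every $z \in \partial D$, so the expectation vanishes and $P_x(\tau_{\overline D} = \tau_D) = 1$.

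A couple of technical points deserve care. One must make sure that $\tau_D$ is a stopping time to which the strong Markov property applies — this is standard since $D$ is open and $\mathbb X$ is a right process, and $\tau_D = \inf\{t > 0 : X_t \in D^c\}$ is a stopping time with respect to the (augmented) filtration. Second, one needs the identity that after $\tau_D$ the debut of $\overline D^c$ is $\tau_D + \sigma_{\overline D^c}\circ\theta_{\tau_D}$, which is just the additivity of hitting times under the shift operator $\theta$, valid because $X_{\tau_D}\in\overline D$ on the event considered (so $\overline D^c$ has not yet been hit at time $\tau_D$). I expect the only genuine subtlety — and the reason the Lipschitz hypothesis enters — is the validity of \eqref{eq.swo}, but that has already been carried out in the text via the exterior cone condition; given it, the proof of the proposition is a short strong-Markov argument. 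Hence no serious obstacle remains beyond bookkeeping with the shift operators.
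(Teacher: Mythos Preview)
Your proposal is correct and follows essentially the same approach as the paper: apply the strong Markov property at $\tau_D$, split into the cases $X_{\tau_D}\in\overline D^c$ (where $\tau_{\overline D}=\tau_D$ trivially) and $X_{\tau_D}\in\partial D$, and use \eqref{eq.swo} to conclude that $P_{X_{\tau_D}}(\sigma_{\overline D^c}=0)=1$ on the latter event. The paper's write-up computes $\mathbb E_x\big(\mathbf{1}_{\{\tau_{\overline D}=\tau_D\}}\mid\FF_{\tau_D}\big)=P_{X_{\tau_D}}(\tau_{\overline D}=0)$ directly rather than the complementary probability, but this is only a cosmetic difference.
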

\begin{proof}
Let $x\in D$. Clearly, $\tau_D\le\tau_{\overline D}$. By the strong Markov property
\begin{equation}
\label{eq.swo122}
\mathbb E_x\big(\mathbf{1}_{\{\tau_{\overline D}=\tau_D\}}\big|\FF_{\tau_D}\big)=P_{X_{\tau_D}}(\tau_{\overline D}=0),\quad P_x\mbox{-a.s.}
\end{equation}
By (\ref{eq.swo}), $P_y(\tau_{\overline D}=0)=1,\, y\in\partial D$. If $y\in \overline D^c$, then obviously $P_y(\tau_{\overline D}=0)=1$.
Thus,
\begin{align*}
\mathbb E_x P_{X_{\tau_D}}(\tau_{\overline D}=0)&=\mathbb E_x\mathbf{1}_{\{X_{\tau_D}\in \partial D\}}P_{X_{\tau_D}}(\tau_{\overline D}=0)+
\mathbb E_x\mathbf{1}_{\{X_{\tau_D}\in \overline D^c\}} P_{X_{\tau_D}}(\tau_{\overline D}=0)\\&= P_x(X_{\tau_D}\in \partial D)+P_x(X_{\tau_D}\in \overline D^c)=1.
\end{align*}
From this and (\ref{eq.swo122}), we get (\ref{eq.rd1}).
\end{proof}

\subsection{Eigenfunctions and intrinsic ultracontractivity}

In what follows, we denote by $\lambda_1(B)$ the first eigenvalue (whenever it exists) of a given operator $B$. To simplify notation, we also set
\[
\lambda_1^D:= \lambda_1(-(\Delta^{\alpha})_{|D}),\quad \lambda_1^D[q]:= \lambda_1(-(\Delta^{\alpha})_{|D}+q),
\]
where $q:D\to \BR^+$ is a positive potential.
It is  well known (see e.g. \cite{BG0}) that
\begin{equation}
\label{eq.tg.1}
0<p_D(t,x,y)\le \hat c \min\Big\{\frac{1}{t^{d/2\alpha}}, \frac{t}{|x-y|^{d+2\alpha}}\Big\},\quad x,y\in D,\quad t>0.
\end{equation}
Therefore, by Jentzsch's theorem (see e.g.  \cite[Theorem V.6.6, page 338]{Schaefer} or \cite[Lemma 6.4.5]{FOT})  for any  bounded open domain $D\subset \BR^d$  and a positive $q\in \BB_b(D)$ there exists a  unique strictly positive continuous  eigenfunction $\psi$ associated with the eigenvalue $\lambda^D_1[q]>0$ such that $\|\psi\|_{L^2(D;m)}=1$. We call $\psi$ the {\em principal eigenfunction}  for the operator $-(\Delta^{\alpha})_{|D}+q$, and  $\lambda_1^D[q]$ the {\em principal eigenvalue}. Moreover, if $D$ is Dirichlet regular, then $\psi\in C_0(D)$.  
We  denote by $\varphi_1^D$ the principal  eigenfunction for $-(\Delta^{\alpha})_{|D}$. From (\ref{eq.tg.1})  it also follows   that  for any $q\in [1,(d+2\alpha)/d)$ and  $T>0$,
\begin{equation}
\label{eq.tg.2}
\sup_{x\in D}\int_0^T\|p_D(t,x,\cdot)\|_{L^q(D;m)}\,dt<\infty.
\end{equation}

Recall that a symmetric Markov semigroup $(Q_t)$ on $L^2(D;m)$ is said to be  {\em ultracontractive} if 
for any $t>0$, $Q_t: L^2(D;m)\to L^\infty(D;m)$ is bounded. In this case, there exists a transition function $q(\cdot,\cdot,\cdot)$
such that for any $f\in\BB^+(D)$,
\[
Q_tf(x)=\int_Df(y)q(t,x,y)\,m(dy),\quad x\in D,\, t>0,
\]
and for any $t>0$ there exists $c_t>0$ such that $q(t,x,y)\le c_t,\, x,y\in D$.
By \cite{Grzywny},  the semigroup $(P^D_t)_{t>0}$ is {\em intrinsically ultracontractive} (the notion introduced in  \cite{DS}), i.e. for any  $t>0$, a Markov semigroup $(Q^D_t)$ defined as
\[
Q^D_tf(x)=e^{t\lambda_1^D}\frac{P^D_t(\varphi_1^Df)(x)}{\varphi_1^D(x)},\quad x\in D
\]
is ultracontractive on $L^2(D,\nu)$ with $\nu:= (\varphi_1^D)^2\cdot m$. Observe that the transition density $q_D(\cdot,\cdot,\cdot)$
for $(Q^D_t)$ admits the following formula
\[
q_D(t,x,y)=e^{t\lambda_1^D}\frac{p_D(t,x,y)}{\varphi^D_1(x)\varphi_1^D(y)},\quad x,y\in D,\, t>0.
\]
Therefore, for any $t>0$, there exists $\beta_t>0$ such that 
\begin{equation}
\label{eq.iuc12u}
p_D(t,x,y)\le \beta_t e^{-t\lambda_1^D } \varphi^D_1(x)\varphi_1^D(y),\quad x,y\in D.
\end{equation}
By \cite[Lemma 2.1.2]{Davies}, $\beta_t$ is non-increasing as  $t\rightarrow \infty$.
Moreover, by \cite[Theorem 3.2]{DS} for any $t>0$, there exists $\alpha_t>0$ such that
\begin{equation}
\label{eq.iuc12l}
\alpha_t e^{-t\lambda_1^D }\varphi_1^D(x)\varphi_1^D(y)\le p_D(t,x,y),\quad x,y\in D.
\end{equation}

\section{Eigenfunctions and eigenvalues}
\label{sec3}

From now on, unless it is stated otherwise,  we assume that (H1), (H2) (see Introduction) are satisfied.
Recall that by the Rayleigh-Ritz variational formula (see e.g. \cite[Theorem 4.5.1]{Davies}),
\[
\lambda^D_1[q]=\inf\{\EE_D(u,u)+(qu,u)_{L^2(D;m)}: u\in D(\EE_D),\, \|u\|_{L^2(D;m)}=1\},
\]
for any bounded open domain $D\subset\BR^d$, and for any $q\in\BB_b^+(D)$.
In particular, for open bounded domains $D_0\subset D\subset \BR^d$, we have
\begin{equation}
\label{eq3.rrvf12}
\lambda^D_1[q]\le \lambda_1^{D_0}[q].
\end{equation}

\begin{theorem}
\label{th3.1}
Let $c\in\BB^+_b(D)$, and $\{q_k\}\subset\BB^+_b(D)$ be a sequence
such that \mbox{\rm supp}$[q_k]\subset D\setminus \overline D_0$, $k\ge 1$, and for every compact $K\subset D\setminus \overline D_0$, $\inf_{x\in K}q_k(x)\nearrow \infty$ as $k\rightarrow\infty$. Let $\psi_k^c$ (resp. $\psi^c$) be the  principal eigenfunction for  $-(\Delta^{\alpha})_{|D}+c+q_k$ (resp. $-(\Delta^{\alpha})_{|D}+c$). Then
\[
\lambda^D_1[c+q_k]\nearrow \lambda^{D_0}_1[c],
\]
and there exists a subsequence $\{k_n\}$ such that
\[
\psi_{k_n}^c(x)\rightarrow \psi^c(x)\quad \mbox{for q.e. }x\in D_0.
\]
\end{theorem}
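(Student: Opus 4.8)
The plan is to characterise $\lambda_k:=\lambda_1^D[c+q_k]$ by the Rayleigh--Ritz formula, extract a limit of the normalised eigenfunctions by compactness in $H^\alpha_0(D)$, use the exploding potential to confine this limit to $D_0$, and then match energies to turn weak convergence into strong, hence quasi-everywhere, convergence. (Throughout, $\psi^c$ is read as the principal eigenfunction of $-(\Delta^\alpha)_{|D_0}+c$, extended by $0$ outside $D_0$, this being the reading compatible with the stated conclusions.)

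\emph{Monotone upper bound.} Since $q_k\le q_{k+1}$ and $u\mapsto(q_ku,u)_{L^2(D;m)}=\int_Dq_ku^2\,dm$ is monotone in $k$, the variational formula shows that $\lambda_k$ is non-decreasing. Inserting into the formula for $\lambda_1^D[c+q_k]$ the zero-extension of the principal eigenfunction of $-(\Delta^\alpha)_{|D_0}+c$ --- which is supported in $D_0$, where $q_k\equiv 0$, and on which $\EE_D$ agrees with $\EE_{D_0}$ --- gives $\lambda_k\le\lambda_1^{D_0}[c]$ for every $k$; cf. \eqref{eq3.rrvf12}. Hence $\lambda_k\nearrow L$ for some $L\le\lambda_1^{D_0}[c]$, and it remains to show $L=\lambda_1^{D_0}[c]$ and to identify the limit of the $\psi_k^c$.

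\emph{Compactness and confinement.} Normalise $\|\psi_k^c\|_{L^2(D;m)}=1$. From $\EE_D(\psi_k^c,\psi_k^c)\le\lambda_k\le\lambda_1^{D_0}[c]$ and \eqref{eq2.1.equiv1} the family $\{\psi_k^c\}$ is bounded in $H^\alpha_0(D)$, while $\int_Dq_k(\psi_k^c)^2\,dm\le\lambda_1^{D_0}[c]$. Using the compact embedding \eqref{eq2.1.equiv15r} with $p=2$, pass to a subsequence along which $\psi_{k_n}^c\rightharpoonup w$ weakly in $H^\alpha_0(D)$ and strongly in $L^2(D;m)$; then $w\ge 0$ and $\|w\|_{L^2(D;m)}=1$. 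For each compact $K\subset D\setminus\overline D_0$ one has $\int_K(\psi_k^c)^2\,dm\le(\inf_Kq_k)^{-1}\lambda_1^{D_0}[c]\to 0$, so $w=0$ $m$-a.e. on the open set $D\setminus\overline D_0$; combined with $\overline D_0\subset D$, $m(\partial D_0)=0$ and the fact that $w$ already vanishes q.e. on $D^c$, this gives $w=0$ $m$-a.e. on $\BR^d\setminus D_0$, hence $w\in H^\alpha_0(D_0)$ since $D_0$ is Lipschitz. \emph{This last step is the main obstacle}: it amounts to upgrading ``$w=0$ $m$-a.e. on $D\setminus\overline D_0$'' to ``$\tilde w=0$ q.e. there, including on $\partial D_0$''. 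I would justify it either through the description of $H^\alpha_0(D_0)$, valid for Lipschitz $D_0$, as the space of $H^\alpha(\BR^d)$-functions vanishing $m$-a.e. off $D_0$ (for $\alpha\le\frac12$ this is immediate since then $\partial D_0$ is polar; for $\alpha>\frac12$ it is the zero-trace characterisation), or, more self-containedly, through the Dirichlet-form fact that a quasi-open Lebesgue-null set is polar, together with the Kac regularity of $D_0$ (Proposition \ref{prop.ciea}).

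\emph{Identification and upgrade.} For $v\in C_c^\infty(D_0)$ the $q_{k_n}$-term in the weak identity $\EE_D(\psi_{k_n}^c,v)+((c+q_{k_n})\psi_{k_n}^c,v)=\lambda_{k_n}(\psi_{k_n}^c,v)$ vanishes; letting $n\to\infty$ (weak convergence for the form term, strong $L^2$-convergence for the remaining terms) gives $\EE_{D_0}(w,v)+(cw,v)=L(w,v)$, which extends to all $v\in H^\alpha_0(D_0)$ by density. Thus $w$ is a non-negative, non-trivial eigenfunction of $-(\Delta^\alpha)_{|D_0}+c$ for the eigenvalue $L$; by the Jentzsch/Perron--Frobenius argument behind \eqref{eq.tg.1} (the kernel $p_{D_0}$ is strictly positive) $w$ is a positive multiple of the principal eigenfunction and $L=\lambda_1^{D_0}[c]$, so $w=\psi^c$ after normalisation and $\lambda_k\nearrow\lambda_1^{D_0}[c]$. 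Finally, dropping the non-negative $q_{k_n}$-term in $\lambda_{k_n}=\EE_D(\psi_{k_n}^c,\psi_{k_n}^c)+((c+q_{k_n})\psi_{k_n}^c,\psi_{k_n}^c)$ and using $(c\psi_{k_n}^c,\psi_{k_n}^c)\to(cw,w)$ yields $\limsup_n\EE_D(\psi_{k_n}^c,\psi_{k_n}^c)\le\lambda_1^{D_0}[c]-(cw,w)=\EE_{D_0}(w,w)=\EE_D(w,w)$, while weak lower semicontinuity gives the matching bound for $\liminf_n$; hence $\EE_D(\psi_{k_n}^c,\psi_{k_n}^c)\to\EE_D(w,w)$, which together with the weak convergence forces $\psi_{k_n}^c\to w$ strongly in $H^\alpha_0(D)$. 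By \cite[Theorem 2.1.4]{FOT} a further subsequence of the (continuous) versions then converges quasi-everywhere, giving $\psi_{k_n}^c(x)\to\psi^c(x)$ for q.e. $x\in D_0$.
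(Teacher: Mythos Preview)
Your proof is correct and takes a genuinely different route from the paper's. The paper argues probabilistically: it writes the Feynman--Kac representation
\[
\psi_k^c(x)=\lambda_1^D[c+q_k]\,\mathbb E_x\int_0^{\tau_D}e^{-\int_0^t(c+q_k)(X_r)\,dr}\psi_k^c(X_t)\,dt,
\]
observes that $e^{-\int_0^t q_k(X_r)\,dr}\to\mathbf 1_{[0,\tau_{\overline{D_0}}]}(t)$ under the exploding-potential hypothesis, passes to the limit in the integral (using only the $L^2$-compactness of $\{\psi_k^c\}$ to control the integrand), and then invokes Kac regularity of $D_0$ (Proposition~\ref{prop.ciea}) to replace $\tau_{\overline{D_0}}$ by $\tau_{D_0}$; this yields directly that the q.e.\ pointwise limit $\tilde\psi$ satisfies the resolvent identity on $D_0$, hence is the principal eigenfunction there. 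Your approach is purely variational: confinement via the bound $\int q_k(\psi_k^c)^2\le\lambda_1^{D_0}[c]$, the Sobolev-space identification $\{u\in H^\alpha(\BR^d):\operatorname{supp}u\subset\overline{D_0}\}=H^\alpha_0(D_0)$ for Lipschitz $D_0$ (your ``main obstacle''; this in fact holds for \emph{all} $\alpha\in(0,1)$ by \cite[Theorem~3.29]{McLean}, so your case split at $\alpha=\tfrac12$ is unnecessary), weak passage to the limit in the form identity, and an energy-matching step to upgrade weak to strong $H^\alpha_0(D)$-convergence and thence (via \cite[Theorem~2.1.4]{FOT}) to q.e.\ convergence. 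The paper's route obtains q.e.\ convergence for free from the integral representation and makes the role of Kac regularity explicit, in keeping with the probabilistic philosophy of the article; your route avoids stochastic analysis altogether and, as a bonus, gives strong convergence in the energy space on all of $D$, not merely q.e.\ convergence on $D_0$. You also correctly flag the typo in the statement: $\psi^c$ must denote the principal eigenfunction on $D_0$, as the paper's own proof confirms.
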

\begin{proof}
By the definition of the principal eigenvalue,
\begin{equation}
\label{eq3.6}
\EE_D(\psi^c_k,\psi^c_k)+(c\psi^c_k,\psi^c_k)+(q_k\psi^c_k,\psi^c_k)=\lambda^D_1[c+q_k].
\end{equation}
By \cite[Theorem 6.1.1]{FOT},
\begin{equation}
\label{eq3.7}
\psi^c_k(x)=\lambda^D_1[c+q_k]\mathbb E_x\int_0^{\tau_D}e^{-\int_0^t(c+q_k)(X_r)\,dr}\psi^c_k(X_t)\,dt,\quad x\in D.
\end{equation}
By (\ref{eq3.rrvf12}),
\[
\lambda^D_1[c+q_k]\le \lambda^{D_0}_1[c+q_k]=\lambda^{D_0}_1[c].
\]
From  this and (\ref{eq3.6}), we get
\begin{equation}
\label{eq3.9}
\sup_{k\ge 1}\EE_D(\psi^c_k,\psi^c_k)\le \sup_{k\ge 1}\lambda^D_1[c+q_k]
=:\lambda<\infty.
\end{equation}
By \eqref{eq2.1.equiv15r},
there exists a subsequence $\{n_k\}$ such that
$\psi^c_{k_n}\rightarrow \psi $ in $L^2(D;m)$ for some $\psi\in
L^2(D;m)$ such that $\|\psi\|_{L^2(D;m)}=1$. Let $\nu\in S_0(D)$ (cf. \eqref{eq.ppt2})
be a positive measure such that $R^D\nu$ is bounded q.e by a
constant. Then
\[
\int_D\mathbb E_x\int_0^{\tau_D}|\psi^c_k(X_r)-\psi(X_r)|\,dr\,d\nu=(|\psi^c_k-\psi|,R^D\nu)\le c\|\psi^c_k-\psi\|_{L^2(D;m)}.
\]
From this and a standard reasoning (see the reasoning following  \cite[(5.2.22)]{FOT}), we infer that, up to subsequence,
\begin{equation}
\label{eq3.cutr5}
\mathbb E_x \int_0^{\tau_D}|\psi^c_k(X_r)-\psi(X_r)|\,dr\to 0,\quad\mbox{q.e.}\,\,\, x\in D.
\end{equation}
Applying this convergence  and (\ref{eq3.7}) we deduce that, up to a subsequence, $\{\psi^c_{k_n}\}$ is convergent q.e.
Set $\tilde\psi(x)=\limsup_{n\rightarrow \infty}\psi^c_{k_n}(x)$, $x\in D$. Observe that by the assumptions on the sequence $\{q_k\}$,
\[
\int_0^tq_k(X_r)\rightarrow \infty\mathbf{1}_{\{\tau_{ \overline D_0}<t\}},\quad t\in [0,\tau_D],
\]
with the convention that $0\cdot \infty=0$. Hence
\[
e^{-\int_0^tq_k(X_r)\,dr}\rightarrow \mathbf{1}_{[0, \tau_{ \overline D_0}]}(t),\quad t\in [0,\tau_D].
\]
From this, (\ref{eq3.7}), \eqref{eq3.cutr5} and q.e. convergence of $\psi^c_{k_n}$ we conclude 
\[
\tilde \psi(x)=\lambda \mathbb E_x\int_0^{\tau_{\overline D_0}}e^{-\int_0^t c(X_r)\,dr}\tilde\psi(X_t)\,dt,\quad\mbox{q.e.}\,\,\, x\in D_0.
\]
By Proposition \ref{prop.ciea}, 
\[
\tilde \psi(x)=\lambda \mathbb E_x\int_0^{\tau_{D_0}}e^{-\int_0^t c(X_r)\,dr}\tilde\psi(X_t)\,dt,\quad\mbox{q.e.}\,\,\, x\in D_0.
\]
By the above formula, we get, in particular, that $\tilde \psi$ is strictly positive and quasi-continuous
on $D_0$. Since $\tilde\psi=\psi$ a.e., $\tilde \psi\in D(\EE_D)$. Hence, by \cite[Theorem 6.1.1]{FOT}, $\tilde \psi$ is a strictly positive solution to
$-(\Delta^{\alpha})_{|D_0}u+cu=\lambda u$.
Therefore $\tilde\psi=\psi^c$ q.e.  and $\lambda=\lambda^{D_0}_1[c]$.
\end{proof}

\begin{lemma}
\label{lm3.1} Let $q_1, q_2\in\BB^+_b(D)$. If $q_1\le q_2$  a.e. and
$m(\{q_1<q_2\})>0$, then
\[
\lambda^{D}_1[q_1]<\lambda^{D}_1[q_2].
\]
\end{lemma}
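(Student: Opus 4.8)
The plan is to use the Rayleigh–Ritz variational characterization of the principal eigenvalue together with the principal eigenfunction $\psi_1$ associated with $-(\Delta^{\alpha})_{|D}+q_1$. First, from \eqref{eq3.rrvf12}-type monotonicity (or directly from the variational formula) one gets $\lambda^D_1[q_1]\le\lambda^D_1[q_2]$ immediately, since $q_1\le q_2$ a.e.\ forces $\EE_D(u,u)+(q_1u,u)\le\EE_D(u,u)+(q_2u,u)$ for every admissible $u$, and taking the infimum preserves the inequality. So the whole content is the strictness. For this, let $\psi_2$ be the principal eigenfunction for $-(\Delta^{\alpha})_{|D}+q_2$, normalized in $L^2(D;m)$; it is strictly positive and continuous on $D$ by Jentzsch's theorem as recalled after \eqref{eq.tg.1}. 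Then
\[
\lambda^D_1[q_1]\le\EE_D(\psi_2,\psi_2)+(q_1\psi_2,\psi_2)=\EE_D(\psi_2,\psi_2)+(q_2\psi_2,\psi_2)-\big((q_2-q_1)\psi_2,\psi_2\big)=\lambda^D_1[q_2]-\int_D(q_2-q_1)\psi_2^2\,dm.
\]
Since $q_2-q_1\ge 0$ a.e.\ and $m(\{q_1<q_2\})>0$, and since $\psi_2>0$ everywhere on $D$, the integral $\int_D(q_2-q_1)\psi_2^2\,dm$ is strictly positive. This yields $\lambda^D_1[q_1]<\lambda^D_1[q_2]$.

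The only point requiring a little care — and the one I would flag as the main obstacle, though it is mild here — is justifying that $\psi_2$ is \emph{strictly} positive on all of $D$ (not merely a.e., and not vanishing on a set of positive measure), so that the strict positivity of the defect integral genuinely follows from $m(\{q_1<q_2\})>0$. This is exactly what Jentzsch's theorem provides in the present setting: by \eqref{eq.tg.1} the killed semigroup has a strictly positive transition density, the associated resolvent is irreducible and positivity-improving, hence the Perron–Frobenius eigenfunction is strictly positive; the continuous version is then strictly positive pointwise on $D$. An alternative, fully probabilistic route to the same fact is the Feynman–Kac representation
\[
\psi_2(x)=\lambda^D_1[q_2]\,\mathbb E_x\int_0^{\tau_D}e^{-\int_0^t q_2(X_r)\,dr}\psi_2(X_t)\,dt,\qquad x\in D,
\]
exactly as in \eqref{eq3.7}, from which strict positivity on $D$ is immediate once one knows $\psi_2\not\equiv 0$.

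Having $\psi_2>0$ on $D$, there is nothing left: the displayed chain of (in)equalities is exact because $\psi_2$ is an eigenfunction (so the first inequality is the variational bound applied to $\psi_2$, and the middle equality just rewrites the eigenvalue), and the final strict inequality follows from $\int_D(q_2-q_1)\psi_2^2\,dm>0$. I would present the argument in precisely this order: (1) recall Rayleigh–Ritz and the non-strict inequality; (2) introduce $\psi_2$ and recall its strict positivity via Jentzsch / Feynman–Kac; (3) plug $\psi_2$ into the variational formula for $\lambda^D_1[q_1]$ and read off the strict gap from the positive-measure assumption.
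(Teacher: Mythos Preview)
Your proof is correct and slightly different in structure from the paper's. The paper argues by contradiction: assuming $\lambda:=\lambda^D_1[q_1]=\lambda^D_1[q_2]$, it writes the weak eigenvalue equation for $\psi_1$ tested against $\psi_2$ and the one for $\psi_2$ tested against $\psi_1$, subtracts, and obtains $((q_2-q_1)\psi_1,\psi_2)=0$, contradicting strict positivity of both $\psi_1$ and $\psi_2$ on $D$. Your route is direct rather than by contradiction, uses only the single eigenfunction $\psi_2$, and appeals to the Rayleigh--Ritz formula instead of the weak eigenvalue equations; as a bonus it yields the explicit quantitative gap $\lambda^D_1[q_2]-\lambda^D_1[q_1]\ge\int_D(q_2-q_1)\psi_2^2\,dm>0$. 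Both arguments ultimately hinge on the same ingredient you correctly flagged, namely the strict positivity of the principal eigenfunction on $D$, which the paper also invokes (via Jentzsch's theorem, stated just after \eqref{eq.tg.1}).
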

\begin{proof}
Let $\psi_1, \psi_2$ be the principal eigenfunctions for
$\lambda^{D}_1[q_1]$ and $\lambda^{D}_1[q_2]$, respectively. It
is clear that $\lambda^{D}_1[q_1]\le \lambda^{D}_1[q_2]$.
Suppose that  $\lambda:=\lambda^{D}_1[q_1]=\lambda^{D}_1[q_2]$.
Then
\[
\EE_D(\psi_1,\psi_2)+(q_1\psi_1,\psi_2)=\lambda (\psi_1,\psi_2),\qquad
\EE_D(\psi_2,\psi_1)+(q_2\psi_2,\psi_1)=\lambda (\psi_2,\psi_1).
\]
Hence $(q_2-q_1,\psi_1\psi_2)=0$, which contradicts  the fact that
$m(\{q_1<q_2\})>0$.
\end{proof}

\begin{lemma}
\label{lm3.2}
We have $\lambda^D_1<\lambda^{D_0}_1$  (cf. {\rm(H1)}).
\end{lemma}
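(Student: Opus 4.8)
The plan is to prove the non-strict inequality $\lambda_1^D\le\lambda_1^{D_0}$ by the variational formula and then rule out equality by a positivity argument. For the first inequality, note that extending functions by zero gives $C^\infty_c(D_0)\subset C^\infty_c(D)$, hence, passing to closures in the $H^{\alpha}(\BR^d)$-norm, $H^{\alpha}_0(D_0)\subset H^{\alpha}_0(D)$; moreover for $u\in H^{\alpha}_0(D_0)$ one has $\EE_D(u,u)=\EE(u,u)=\EE_{D_0}(u,u)$, all three forms being restrictions of $\EE$. The Rayleigh--Ritz formula recalled at the beginning of Section \ref{sec3} then yields $\lambda_1^D\le\lambda_1^{D_0}$ at once. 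So it suffices to assume $\lambda:=\lambda_1^D=\lambda_1^{D_0}$ and derive a contradiction.

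Under this assumption, let $u$ be the principal eigenfunction $\varphi_1^{D_0}$ of $-(\Delta^{\alpha})_{|D_0}$ extended by zero to $D$. Then $u\in H^{\alpha}_0(D_0)\subset H^{\alpha}_0(D)=D(\EE_D)$, $\|u\|_{L^2(D;m)}=1$, and $\EE_D(u,u)=\EE_{D_0}(\varphi_1^{D_0},\varphi_1^{D_0})=\lambda_1^{D_0}=\lambda$, so $u$ attains the infimum defining $\lambda_1^D$. By the standard first-variation argument (take $v\in D(\EE_D)$ and differentiate $\varepsilon\mapsto\EE_D(u+\varepsilon v,u+\varepsilon v)-\lambda\|u+\varepsilon v\|^2_{L^2(D;m)}$ at $\varepsilon=0$), this forces $\EE_D(u,v)=\lambda(u,v)$ for all $v\in D(\EE_D)$, i.e. $u\in D((\Delta^{\alpha})_{|D})$ and $-(\Delta^{\alpha})_{|D}u=\lambda u$. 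Equivalently, $T^D_tu=e^{-\lambda t}u$ in $L^2(D;m)$ for every $t>0$, and hence, by \eqref{eq.dfl1}, $u(x)=e^{\lambda t}\int_Dp_D(t,x,y)u(y)\,dy$ for a.e.\ $x\in D$.

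Now I would invoke the strict positivity of the transition density: by \eqref{eq.tg.1}, $p_D(t,x,y)>0$ for all $x,y\in D$ and $t>0$. Since $u\ge 0$ and $u\not\equiv0$, the displayed identity forces $u(x)>0$ for a.e.\ $x\in D$. On the other hand, $u=0$ on $D\setminus\overline D_0$ by construction, and $m(D\setminus\overline D_0)>0$: by (H1), $\{b=0\}=\overline D_0$ while $\int_Db\,dm>0$, so $m(\{b>0\})=m(D\setminus\overline D_0)>0$. This contradiction shows $\lambda_1^D<\lambda_1^{D_0}$. I expect the only delicate point to be making sure that the zero-extension of $\varphi_1^{D_0}$ genuinely minimizes the Rayleigh quotient over all of $H^{\alpha}_0(D)$ (not merely over $H^{\alpha}_0(D_0)$), which is exactly where the equality assumption $\lambda_1^D=\lambda_1^{D_0}$ is used; everything else is routine. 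Alternatively, one can bypass the first-variation step by combining Theorem \ref{th3.1} (with $c=0$) with Lemma \ref{lm3.1}: for any sequence $\{q_k\}$ as in Theorem \ref{th3.1}, the set $D\setminus\overline D_0$ is nonempty and open, so for $k$ large enough $q_k$ is nontrivial, whence $\lambda_1^D=\lambda_1^D[0]<\lambda_1^D[q_k]\le\lim_j\lambda_1^D[q_j]=\lambda_1^{D_0}$.
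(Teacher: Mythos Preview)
Your proof is correct. In fact you give two arguments: your alternative at the end is precisely the paper's own proof (combining Theorem~\ref{th3.1} with Lemma~\ref{lm3.1}), so there is nothing to add on that front.

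Your main argument, by contrast, is genuinely different from the paper's and worth noting. The paper deduces the strict inequality as a corollary of the heavier Theorem~\ref{th3.1} (the cut-off property $\lambda_1^D[q_k]\nearrow\lambda_1^{D_0}$), together with the strict monotonicity of Lemma~\ref{lm3.1}. Your route is more elementary and self-contained: you use only the Rayleigh--Ritz characterization, the standard first-variation argument showing that a minimizer of the quotient is an eigenfunction of $-(\Delta^{\alpha})_{|D}$, and the strict positivity of $p_D$ from \eqref{eq.tg.1} to force any such eigenfunction to be a.e.\ positive on all of $D$. This avoids Theorem~\ref{th3.1} entirely and would work just as well in any setting where the ground-state eigenfunction is known to be strictly positive (e.g.\ via Jentzsch's theorem or irreducibility of the semigroup). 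The paper's approach, on the other hand, fits naturally into the flow of Section~\ref{sec3}, where Theorem~\ref{th3.1} is the central result and Lemma~\ref{lm3.2} drops out as an immediate consequence.
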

\begin{proof}
Let $\{q_k\}\subset\BB^+_b(D)$  be a sequence of functions such
that supp$[q_k]\subset D\setminus \overline D_0$ and for every
compact $K\subset D\setminus \overline D_0$, $\inf_{x\in
K}q_k(x)\nearrow \infty$. By Theorem \ref{th3.1},
$\lambda_1^D[q_k]\nearrow \lambda_1^{D_0}$ as
$k\rightarrow\infty$, whereas by Lemma \ref{lm3.1},
$\lambda_1^D< \lambda_1^D[q_k]$, $k\ge1$, which proves the
lemma.
\end{proof}

\section{Existence result for semilinear elliptic equations}
\label{sec4}

We recall that we assume that (H1), (H2) are satisfied and $D$ is a bounded Lipschitz domain.

Let  $f:D\times \mathbb R\rightarrow \mathbb R$ be a continuous function which is bounded on bounded subsets of $D\times \BR$.  We consider the following  problem:
 \begin{equation}
\label{eq3.1}
-\Delta^{\alpha} u= f(\cdot,u),\quad {\rm in}\,\,\, D,\qquad u=0,\quad{\rm in}\,\,\, \BR^d\setminus D.
\end{equation}

\begin{definition}
\label{df3.1}
We say that a bounded function $u$ on $D$ is a solution to (\ref{eq3.1}) if
for q.e. $x\in D$,
\begin{equation}
\label{eq.er1}
u(x)=R^Df(\cdot,u)(x).
\end{equation}
\end{definition}

\begin{remark}
In the present section, we choose  as  basis  the above  integral form  definition of a solutions to PDE \eqref{eq3.1}
since it is suitable  for the method of sub and supersolutions we apply below. However, the above definition
is related to the following more familiar  definitions.
Let $u$ be a bounded function on $\BR^d$.
\label{rem4.2}
\begin{itemize}
\item[a)]
By (\ref{eq.dfl101}),  $u$ is a  solution to (\ref{eq3.1}) if and only if $u$ is a {\em weak solution} to (\ref{eq3.1}), i.e.  $u\in H^{\alpha}_0(D)$
and
\[
\EE_D(u,v)=(f(\cdot,u),v),\quad v\in H^{\alpha}_0(D).
\]
\item[b)] By \eqref{eq.ppt12}, $u$ is a  solution to (\ref{eq3.1}) if and only if $u\in D(-(\Delta^{\alpha})_{|D})$, and 
\[
-(\Delta^{\alpha})_{|D}u=f(\cdot,u)
\] 
in $L^2(D;m)$.
\item[c)] By  \cite[Theorem 7.1]{Kwasnicki}, if $u$ is a continuous  solution to (\ref{eq3.1}), then
\[
-\Delta^{\alpha}u(x)=f(x,u(x)),\quad x\in D,\quad\quad u(x)=0,\quad x\in \BR^d\setminus D,
\]
i.e. the limit in \eqref{eq.flsid} exists and the above equation holds for any $x\in \BR^d\setminus D$.
Conversely, assume that $u\in\BB_b(\BR^d)\cap C(D)$  and satisfies  the above equations, i.e. $u$
is a {\em pointwise solution} to \eqref{eq3.1}. Then $u$ is a solution to \eqref{eq3.1}
(see e.g. \cite[Theorem 7.2]{Kwasnicki}).
\end{itemize}
\end{remark}

\begin{definition}
\label{def3.2}
We say that a bounded function $u$ on $D$ is a {\em  supersolution} (resp. {\em subsolution}) to (\ref{eq3.1}) if there exists a positive (resp. negative) measure $\mu\in\MM_{0}(D)$ such that   for q.e. $x\in D$,
\[
u(x)=R^Df(\cdot,u)(x)+R^D\mu(x).
\]
\end{definition}

\begin{proposition}
\label{prop3.3}
 Let $\underline u$ (resp. $\bar u$) be a  subsolution (resp. supersolution) to {\rm{(\ref{eq3.1})}} and $\underline u\le \bar u$. Then there exists a  solution $u$ to {\rm{(\ref{eq3.1})}}
such that $\underline u\le u\le \bar u $.
\end{proposition}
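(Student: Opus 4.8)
The plan is to run the classical method of sub- and supersolutions in the probabilistic ``Revuz measure / Feynman--Kac'' framework of the paper, producing the solution as a monotone (or Perron-type) limit of an iteration scheme sandwiched between $\underline u$ and $\bar u$. First I would fix a constant $\Lambda>0$ large enough that, on the bounded set where $\underline u\le s\le \bar u$ (which is bounded since $\underline u,\bar u$ are bounded and $f$ is bounded on bounded sets), the map $s\mapsto f(x,s)+\Lambda s$ is nondecreasing in $s$ for every $x\in D$. Writing $L:=-\Delta^\alpha+\Lambda$ and noting $[0,\infty)\subset\rho((\Delta^\alpha)_{|D})$, so that $L$ has a positive resolvent $R^D_\Lambda$ with $R^D_\Lambda(f(\cdot,u)+\Lambda u)=R^D f(\cdot,u)$ for bounded $u$ (here I use \eqref{eq.ppt12}, \eqref{eq.dfl1}, and the probabilistic identities of Section \ref{sec2.2}), the fixed-point equation \eqref{eq.er1} becomes $u=R^D_\Lambda\big(f(\cdot,u)+\Lambda u\big)$. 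I would then define the iteration $u_0:=\bar u$ and $u_{n+1}:=R^D_\Lambda\big(f(\cdot,u_n)+\Lambda u_n\big)$.

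The key steps are monotonicity and stability of this iteration. Because $\bar u$ is a supersolution, $\bar u=R^D_\Lambda(f(\cdot,\bar u)+\Lambda\bar u)+R^D\mu$ with $\mu\ge 0$, hence $u_1=R^D_\Lambda(f(\cdot,\bar u)+\Lambda\bar u)\le \bar u$ q.e.\ (using $R^D\mu\ge0$, which holds by \eqref{eq.ppt3} since $\mu$ is smooth and positive). Inductively, if $\underline u\le u_n\le u_{n-1}\le\bar u$ q.e., then the monotonicity of $s\mapsto f(x,s)+\Lambda s$ together with positivity of $R^D_\Lambda$ gives $u_{n+1}\le u_n$; and comparing with the subsolution $\underline u=R^D_\Lambda(f(\cdot,\underline u)+\Lambda\underline u)+R^D\nu$, $\nu\le0$, gives $\underline u\le R^D_\Lambda(f(\cdot,\underline u)+\Lambda\underline u)\le u_{n+1}$. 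So $\{u_n\}$ is a nonincreasing sequence of Borel functions with $\underline u\le u_n\le\bar u$; set $u:=\lim_n u_n$, a bounded Borel function with $\underline u\le u\le\bar u$. It remains to pass to the limit in $u_{n+1}=R^D_\Lambda\big(f(\cdot,u_n)+\Lambda u_n\big)$: since $f(x,u_n(x))+\Lambda u_n(x)\to f(x,u(x))+\Lambda u(x)$ pointwise (continuity of $f$) and the integrands are uniformly bounded, dominated convergence against the kernel $G_{D,\Lambda}(x,\cdot)$ (which is finite, by \eqref{eq.estgfu}--\eqref{eq.estgfu2}) yields $u=R^D_\Lambda\big(f(\cdot,u)+\Lambda u\big)=R^D f(\cdot,u)$ q.e., i.e.\ $u$ solves \eqref{eq.er1}.

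The main obstacle I expect is the careful bookkeeping of the ``q.e.'' qualifiers: the supersolution/subsolution identities, the iteration, and the comparison inequalities all hold only quasi-everywhere, and one must check that the exceptional sets can be taken uniform in $n$ (a countable union of polar sets is polar) so that the limit identity holds q.e.\ on a single cofinal set; one also needs that $f(\cdot,u_n)+\Lambda u_n$, being bounded Borel, is legitimately integrated by the resolvent kernel and that the resulting function is again the quasi-continuous version appearing in \eqref{eq.er1} (this is where \eqref{eq.dfl101} and the equivalence of $\mathrm{Cap}_D$ and $\mathrm{Cap}$ on $D$ are used). A secondary technical point is verifying the existence of the threshold $\Lambda$: it suffices that $f$ is, say, locally Lipschitz or merely that on the relevant bounded interval $f(x,\cdot)$ has a modulus of monotonicity; since the paper only assumes $f$ continuous and bounded on bounded sets, I would instead argue with $\Lambda$ chosen via the uniform bound on $f$ over the compact range $[\inf\underline u,\sup\bar u]$ and replace pointwise monotonicity by the weaker statement that $s\mapsto f(x,s)+\Lambda s$ is nondecreasing \emph{after} truncating $u_n$ to $[\underline u(x),\bar u(x)]$, which is harmless because the iterates stay in that interval. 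Alternatively, and perhaps more cleanly, one can bypass the Lipschitz issue entirely by using a Perron-type construction: take $u(x):=\inf\{w(x): w \text{ supersolution}, \underline u\le w\le\bar u\}$ and show directly that $u$ is a solution, but the monotone-iteration argument above is the one I would write out in detail.
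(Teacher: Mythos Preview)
Your monotone-iteration scheme is the classical one and is correctly set up in the probabilistic framework, but it has a genuine gap under the stated hypotheses. The proposition assumes only that $f$ is continuous and bounded on bounded sets, and under that assumption there is in general \emph{no} finite $\Lambda$ making $s\mapsto f(x,s)+\Lambda s$ nondecreasing on the relevant interval: take for instance $f(x,s)=-\sqrt{|s|}$ on any interval containing $0$, whose one-sided difference quotients are unbounded below. Your proposed workaround---choosing $\Lambda$ via the uniform bound on $f$ and truncating to $[\underline u(x),\bar u(x)]$---does not help: a sup-norm bound on $f$ says nothing about a one-sided Lipschitz constant, and the truncation is vacuous since the iterates already live in that interval by induction. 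Without monotonicity of $s\mapsto f(x,s)+\Lambda s$ the inductive comparisons $u_{n+1}\le u_n$ and $\underline u\le u_{n+1}$ both fail, and the scheme collapses. The Perron alternative you mention at the end is not developed, and showing that the infimum of supersolutions is itself a solution is again delicate without some one-sided Lipschitz control.

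The paper sidesteps this entirely by a different route. It first truncates the nonlinearity to $\hat f(x,y):=f\big(x,(\bar u(x)\wedge y)\vee\underline u(x)\big)$, which is globally bounded and continuous, and obtains a fixed point of $\Phi(u)=R^D\hat f(\cdot,u)$ on $L^2(D;m)$ by Schauder's theorem (compactness of $\Phi$ coming from the compact embedding \eqref{eq2.1.equiv15r} together with \eqref{eq.dfl101} and \eqref{eq.estgfu}). The bracket $\underline u\le\hat u\le\bar u$ is then verified \emph{a posteriori} by applying the Tanaka--Meyer formula to the stochastic representations of $\hat u$ and $\bar u$ (resp.\ $\underline u$) furnished by Lemma~\ref{lm2.4.1}; this comparison uses only the definition of $\hat f$ and the sign of the measures in Definition~\ref{def3.2}, never monotonicity in $s$. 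For the specific nonlinearity $f(x,y)=ay-b(x)y^p$ used later in the paper your iteration would in fact go through, since that $f$ is locally Lipschitz; but as a proof of Proposition~\ref{prop3.3} in its stated generality the monotone-iteration argument does not close.
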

\begin{proof}
{\bf Step 1.}
Define
\[
\hat f(x,y)= f(x, (\bar u(x)\wedge y)\vee \underline u(x)),\quad x\in D,\, y\in\BR.
\]
We shall show that if  $\hat u$ is a solution  to \eqref{eq3.1}, with $f$ replaced by $\hat f$,
then $\underline u\le \hat u\le \bar u$. By the definition of a supersolution, there exists a positive $\mu\in \MM_{0,b}(D)$ such that
\[
\bar u= R^Df(\cdot,\bar u)+R^D\mu\quad \mbox{q.e.}
\]
By Lemma \ref{lm2.4.1}, there exist  martingales $\hat M, \bar M$
such that for
q.e. $x\in D$,
\[
\hat u(X_t)=\int_t^{\tau_D}\hat f(X_r,\hat u(X_r))\,dr-\int_t^{\tau_D}\,d\hat M_r,\quad t\in [0,\tau_D],\quad P_x\mbox{-a.s.,}
\]
\[
\bar u(X_t)=\int_t^{\tau_D} f(X_r,\bar u(X_r))\,dr+\int_t^{\tau_D}\,dA^\mu_r-\int_t^{\tau_D}\,d\bar M_r,\quad t\in [0,\tau_D],\quad P_x\mbox{-a.s.}
\]
By the Tanaka-Meyer formula (see, e.g., \cite[IV.Theorem 70]{Protter}),
\begin{align*}
(\hat u-\bar u)^+(x)&\le \mathbb E_x\int_0^{\tau_D}\mathbf{1}_{\{\hat u>\bar u\}}(X_r)(\hat f(X_r,\hat u(X_r))-f(X_r,\bar  u(X_r)))\,dr\\&\quad -\mathbb E_x \int_0^{\tau_D}\mathbf{1}_{\{\hat u>\bar u\}}(X_r)\,dA^\mu_r.
\end{align*}
By the definition of $\hat f$ and positivity of $\mu$ we get $(\hat u-\bar u)^+=0$. A similar argument shows that $(\underline u-\hat u)^+=0$. Thus $\underline u\le \hat u\le \bar u$ as claimed.

{\bf Step 2.} Define $\Phi$ by
\[
\Phi: L^2(D;m)\rightarrow L^2(D;m),\quad \Phi(u)= R^D\hat f(\cdot, u).
\]
Since $\hat f$ is bounded, the operator $\Phi$ is well defined (cf. \eqref{eq.estgfu}).  From continuity of $f$ it follows at once that  $\Phi$
is continuous.  Let $\{u_n\}\subset L^2(D;m)$. By \eqref{eq2.1.equiv15r}, \eqref{eq.dfl101}, there exists a subsequence $\{n_k\}$
such that $R^D\hat f(\cdot, u_{n_k})$ is convergent a.e. Applying \eqref{eq.estgfu} and the dominated convergence theorem shows the convergence of $\{R^D\hat f(\cdot, u_{n})\}$ in $L^2(D;m)$. Thus  $\Phi$ is compact. Therefore, by  Schauder's fixed point theorem, there exists $u\in L^2(D;m)$ such that $u=R^D\hat f(\cdot,u)$ a.e. Of course, we may choose an $m$-version  $\hat u$ of $u$ such that $\hat u=R^D\hat f(\cdot,\hat u)$. By  Step 1, $\underline u\le \hat u\le\bar u$, so  $\hat u=R^Df(\cdot,\hat u)$.
\end{proof}

\begin{proposition}
\label{prop3.2} Assume that $a>0$. Then there exists at most one
strictly positive  solution to {\rm{(\ref{eq1.1})}}.
\end{proposition}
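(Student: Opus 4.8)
The statement to prove is Proposition~\ref{prop3.2}: for $a>0$ there is at most one strictly positive solution of \eqref{eq1.1}. The plan is to argue by contradiction: suppose $u_1,u_2$ are two strictly positive solutions, i.e.\ $u_i = R^D(au_i - bu_i^p)$ q.e.\ on $D$ for $i=1,2$. The natural tool is a Picone-type / linearization identity together with the variational characterization of $\lambda_1^D$. First I would observe that since $u_i$ solves \eqref{eq1.1}, the function $u_i$ is (by Remark~\ref{rem4.2}(a)) a weak solution in $H^\alpha_0(D)$, so $\EE_D(u_i,v) = (au_i - bu_i^p, v)$ for all $v\in H^\alpha_0(D)$; moreover by intrinsic ultracontractivity and the Green-function bounds \eqref{eq.estgfu}, \eqref{eq.estgfu1}, each $u_i$ is bounded and comparable to $\varphi_1^D$ near $\partial D$, which will be needed to justify that the test functions below lie in $H^\alpha_0(D)$.

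The core step is the following: test the equation for $u_1$ with $v = u_1 - u_2^2/u_1$ and the equation for $u_2$ with $w = u_2 - u_1^2/u_2$, then subtract. (These quotients make sense q.e.\ because both $u_i$ are strictly positive on $D$; one must check $v,w\in H^\alpha_0(D)$, which is where the boundary comparability with $\varphi_1^D$ enters.) For the fractional Dirichlet form one has the Picone inequality
\[
\EE_D\Big(u_1,\, u_1 - \frac{u_2^2}{u_1}\Big) + \EE_D\Big(u_2,\, u_2 - \frac{u_1^2}{u_2}\Big) \ge 0,
\]
which follows from the pointwise inequality, for $s,t>0$ and $a,b\ge 0$,
\[
(s-t)\Big(\big(s - \tfrac{b^2}{s}\big) - \big(t - \tfrac{b^2}{t}\big)\Big) \;\text{-type expansion} \ \ge 0,
\]
i.e.\ the elementary fact that $(s-t)^2 - s^2 - t^2 + \ldots$ assembles into a sum of squares after expanding the bilinear double integral defining $\EE_D$ (the same computation as in the classical Picone identity for the Dirichlet integral, carried out on the kernel $|x-y|^{-d-2\alpha}$). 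On the right-hand side, after the substitution the linear terms $au_i$ contribute
\[
a\big[(u_1, v) + (u_2, w)\big] = a\Big(u_1 - \tfrac{u_2^2}{u_1} + u_2 - \tfrac{u_1^2}{u_2},\, \text{against } 1\Big) \quad\text{paired appropriately},
\]
and a direct computation shows the $a$-terms cancel, leaving only the nonlinear contribution
\[
-\int_D b\Big(u_1^{p-1} - u_2^{p-1}\Big)\big(u_1^2 - u_2^2\big)\,dm,
\]
which by monotonicity of $t\mapsto t^{p-1}$ is $\le 0$, and is strictly negative unless $u_1 = u_2$ a.e.\ on $\{b>0\}$. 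Combining with the Picone inequality $\ge 0$ forces $u_1 = u_2$ a.e.\ on $\{b>0\}$, hence (by (H1)) on $D\setminus\overline D_0$.

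Finally I would upgrade this to $u_1=u_2$ on all of $D$. On $D_0$ both $u_i$ satisfy $-\Delta^\alpha u_i = a u_i$ (since $b=0$ there), but this is a \emph{nonlocal} operator, so the equation on $D_0$ still sees the values of $u_i$ on $D\setminus\overline D_0$, where we have just shown $u_1=u_2$; thus $w:=u_1-u_2$ satisfies $-\Delta^\alpha w = aw$ in $D_0$ and $w=0$ in $D_0^c$, i.e.\ (via the Feynman-Kac / Green operator representation and Kac regularity from Proposition~\ref{prop.ciea}) $w(x) = a\,\E_x\int_0^{\tau_{D_0}} w(X_t)\,dt$ q.e.\ on $D_0$. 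Here the main obstacle is ruling out a nontrivial such $w$: if $a$ is below $\lambda_1^{D_0}$ this is immediate from the spectral bound, but for general $a>0$ one argues that $w^+$ and $w^-$ would each be sub/supersolutions giving an eigenfunction-type relation forcing $a$ to be an eigenvalue of $-\Delta^\alpha$ on $D_0$ with a \emph{signed} eigenfunction — contradicting simplicity/positivity of the principal eigenfunction (Jentzsch's theorem, as invoked after \eqref{eq.tg.1}) — unless $w\equiv 0$. I expect this last uniqueness-on-$D_0$ argument (reconciling the nonlocality with the degeneracy of $b$) to be the technically delicate point; the Picone computation itself, while lengthy, is routine once the test functions are shown admissible.
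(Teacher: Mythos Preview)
Your approach via the Picone identity is genuinely different from the paper's, and the core computation is sound: testing the two equations against $u_1 - u_2^2/u_1$ and $u_2 - u_1^2/u_2$ and adding does give
\[
0 \;\le\; \EE_D\!\Big(u_1,\,u_1-\tfrac{u_2^2}{u_1}\Big)+\EE_D\!\Big(u_2,\,u_2-\tfrac{u_1^2}{u_2}\Big)
\;=\;-\int_D b\big(u_1^{p-1}-u_2^{p-1}\big)\big(u_1^2-u_2^2\big)\,dm\;\le\;0,
\]
so both sides vanish, and on $\{b>0\}$ you get $u_1=u_2$. The paper instead reduces to the \emph{ordered} case $u_1\le u_2$ (observing $u_1\vee u_2$ is a subsolution and $u_1+u_2$ a supersolution, then invoking Proposition~\ref{prop3.3}), notes that each $u_i$ is the principal eigenfunction of $-(\Delta^\alpha)_{|D}+bu_i^{p-1}$ with eigenvalue $a$, and applies the strict monotonicity Lemma~\ref{lm3.1} to derive $a=\lambda_1^D[bu_1^{p-1}]<\lambda_1^D[bu_2^{p-1}]=a$. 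That argument is shorter, avoids any test-function admissibility issues, and never needs to treat $D_0$ separately.

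Your proof, as written, has a genuine gap at the last step. From $u_1=u_2$ on $D\setminus\overline{D_0}$ you reach $-(\Delta^\alpha)_{|D_0}w=aw$ with $w=0$ on $D_0^c$, and then claim that a nonzero $w$ would contradict Jentzsch's theorem. It would not: Jentzsch only asserts positivity and simplicity of the \emph{principal} eigenfunction; if $a$ happens to be a higher eigenvalue of $-(\Delta^\alpha)_{|D_0}$, a sign-changing $w$ is perfectly consistent. Your sketch about $w^\pm$ being sub/supersolutions does not rescue this for general $a>0$.

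There are two clean fixes. First, you are throwing away information: the left-hand side above equals $A+B$ with
\[
A=\EE_D(u_1,u_1)-\EE_D\!\Big(u_2,\tfrac{u_1^2}{u_2}\Big)\ge0,\qquad
B=\EE_D(u_2,u_2)-\EE_D\!\Big(u_1,\tfrac{u_2^2}{u_1}\Big)\ge0,
\]
each nonnegative by the fractional Picone inequality. Since $A+B=0$, both vanish, and the equality case of Picone forces $u_1=c\,u_2$ on all of $D$; substituting into \eqref{eq1.1} gives $c=c^p$, hence $c=1$, with no separate argument on $D_0$ needed. Second, the ``only if'' direction of Theorem~\ref{th3.2} (whose proof uses only Lemmas~\ref{lm3.1} and~\ref{lm3.2}, not the present proposition) shows that the mere existence of a positive solution already forces $a<\lambda_1^{D_0}$, so your spectral-gap argument on $D_0$ then applies directly. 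Either route closes the gap; the paper's eigenvalue-monotonicity proof sidesteps it entirely.
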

\begin{proof}
Let $u_1, u_2$ be strictly positive  solutions to (\ref{eq1.1}). It
is an elementary check that $u_1+u_2$ is a  supersolution to
(\ref{eq1.1}). It is also well known (see e.g. \cite[Proposition 3.7]{K:arx1}) that
$u_1\vee u_2$ is a subsolution to (\ref{eq1.1}). This, when
combined with Proposition \ref{prop3.3}, shows that without loss of
generality we may assume that $u_1\le u_2$. Striving for a contradiction, suppose that
$m(\{u_1<u_2\})>0$. By the Feynman-Kac formula (see e.g. \cite[Theorem 6.1.1]{FOT}), for every $x\in
D$,
\begin{align*}
(u_2-u_1)(x)&=a \mathbb E_x\int_0^{\tau_D}
e^{-\int_0^t(b(u_2)^{p-1}-b(u_1)^{p-1})(X_r)\,dr}(u_2-u_1)(X_t)\,dt\\&
\ge
a \mathbb E_x\int_0^{\tau_D}
e^{-t\|b\|_\infty\|u_2\|^{p-1}}(u_2-u_1)(X_t)\,dt=aR^D_{\beta}(u_2-u_2)(x),
\end{align*}
with $\beta=\|b\|_\infty\|u_2\|^{p-1}$.
It follows from this and \eqref{eq.tg.1}  that $u_1(x)<u_2(x),\, x\in D$.  In
particular, since $b$ is nontrivial and positive,
$m(\{b(u_1)^{p-1}<b(u_2)^{p-1}\})>0$. Hence, by Lemma \ref{lm3.1},
\[
a=\lambda_1^D[b(u_1)^{p-1}]<\lambda_1^D[b(u_2)^{p-1}]=a,
\]
which is a  contradiction.
\end{proof}

\begin{theorem}
\label{th3.2} There exists a solution to {\rm{(\ref{eq1.1})}}  if
and only if $\lambda_1^D<a<\lambda_1^{D_0}$.
\end{theorem}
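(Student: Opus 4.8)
The plan is to prove the two implications separately. For necessity I would argue via the Feynman--Kac representation together with Lemma~\ref{lm3.1}; for sufficiency via the method of sub- and supersolutions (Proposition~\ref{prop3.3}), applied to $f(x,y)=ay-b(x)y^p$ — a Carath\'eodory function, bounded on bounded subsets of $D\times\BR$, to which the argument of Proposition~\ref{prop3.3} applies without change.

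\emph{Necessity.} Suppose $u$ is a strictly positive solution of \eqref{eq1.1}. Rewriting the equation as $-(\Delta^\alpha)_{|D}u+(bu^{p-1})\,u=au$, one sees that $u$ is a strictly positive element of $D(\EE_D)$ solving the linear eigenproblem for $-(\Delta^\alpha)_{|D}+bu^{p-1}$; since by Jentzsch's theorem the only eigenvalue with a positive eigenfunction is the principal one, $a=\lambda_1^D[bu^{p-1}]$. As $b$ is nontrivial and $u>0$, the potential $bu^{p-1}$ is nonnegative and nontrivial, so Lemma~\ref{lm3.1} with $q_1=0$, $q_2=bu^{p-1}$ gives $\lambda_1^D<a$. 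For the bound $a<\lambda_1^{D_0}$ I would use $u(x)=a\,\mathbb E_x\int_0^{\tau_D}e^{-\int_0^t bu^{p-1}(X_r)\,dr}u(X_t)\,dt$ for q.e.\ $x\in D$. Since $b=0$ on $\overline D_0$, splitting the integral at $\tau_{D_0}$ yields $u(x)=a\,\mathbb E_x\int_0^{\tau_{D_0}}u(X_t)\,dt+(\text{a term strictly positive for }x\in D_0)$, the positivity coming from $u>0$ on $D$ and from the fact that the $\alpha$-stable process exits $D_0$ into the nonempty open set $D\setminus\overline D_0$ with positive probability. Hence $u>a\,R^{D_0}u$ on $D_0$; pairing with $\varphi_1^{D_0}>0$ and using $R^{D_0}\varphi_1^{D_0}=(\lambda_1^{D_0})^{-1}\varphi_1^{D_0}$ and symmetry of $R^{D_0}$ gives $(u,\varphi_1^{D_0})>(a/\lambda_1^{D_0})(u,\varphi_1^{D_0})$, whence $a<\lambda_1^{D_0}$.

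\emph{Sufficiency.} Assume $\lambda_1^D<a<\lambda_1^{D_0}$. A subsolution is $\underline u:=\eps\,\varphi_1^D$ for $\eps>0$ small: then $-(\Delta^\alpha)_{|D}\underline u-f(\cdot,\underline u)=\underline u(\lambda_1^D-a+b\underline u^{p-1})$, and $b\underline u^{p-1}\le\eps^{p-1}\|b\|_\infty\|\varphi_1^D\|_\infty^{p-1}\le a-\lambda_1^D$ once $\eps$ is small, so the associated bounded measure is $\le0$. For a supersolution I would invoke Theorem~\ref{th3.1}: pick an increasing $\{q_k\}\subset\BB^+_b(D)$ with $\mathrm{supp}[q_k]$ a compact subset of $D\setminus\overline D_0$ and $\inf_K q_k\nearrow\infty$ for every compact $K\subset D\setminus\overline D_0$; then $\lambda_1^D[q_k]\nearrow\lambda_1^{D_0}>a$, so fix $k$ with $\lambda_1^D[q_k]>a$, let $\psi_k\in C_0(D)$ be the principal eigenfunction of $-(\Delta^\alpha)_{|D}+q_k$, and set $\bar u:=C\psi_k$. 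Since $-(\Delta^\alpha)_{|D}\bar u-f(\cdot,\bar u)=(\lambda_1^D[q_k]-a)\bar u+b\bar u^p-q_k\bar u$, this is $\ge0$ off $\mathrm{supp}[q_k]$ (as $\lambda_1^D[q_k]>a$), and on the compact set $\mathrm{supp}[q_k]\subset D\setminus\overline D_0$ we have $b\ge c_k>0$ (by (H1)) and $\psi_k\ge\delta_k>0$ (by continuity), so $(\lambda_1^D[q_k]-a)+bC^{p-1}\psi_k^{p-1}\ge\|q_k\|_\infty\ge q_k$ there for $C$ large. Thus $\bar u$ is a supersolution once $C$ is large.

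It remains to arrange $\underline u\le\bar u$. Writing $\psi_k=e^{t\lambda_1^D[q_k]}\,\mathbb E_x[e^{-\int_0^tq_k(X_r)\,dr}\psi_k(X_t)\mathbf{1}_{t<\tau_D}]$ and using $0\le q_k\le\|q_k\|_\infty$ together with the intrinsic ultracontractivity bounds \eqref{eq.iuc12u}--\eqref{eq.iuc12l} on $p_D$, one gets constants $0<c\le C'$ with $c\,\varphi_1^D\le\psi_k\le C'\varphi_1^D$ on $D$; hence $\underline u=\eps\varphi_1^D\le(\eps/c)\psi_k\le C\psi_k=\bar u$ provided $\eps$ is taken small enough (note $\eps$ is chosen last, after $k$ and $C$). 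Proposition~\ref{prop3.3} then gives a solution $u$ of \eqref{eq1.1} with $\eps\varphi_1^D\le u\le C\psi_k$, and $u>0$ on $D$ because $\varphi_1^D>0$ there. The hard part is the construction of the supersolution: because $b$ degenerates on $\overline D_0$, the undamped term $au$ cannot be absorbed there, so constants or multiples of $\varphi_1^D$ fail near $D_0$, and one is forced to use the perturbed eigenfunction $\psi_k$ — whose eigenvalue exceeds $a$ precisely by the cut-off property of Theorem~\ref{th3.1} — while the comparison $\underline u\le\bar u$ relies on intrinsic ultracontractivity in place of the Hopf lemma.
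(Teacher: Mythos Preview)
Your proof is correct and follows essentially the same strategy as the paper, with one genuine variation worth noting.

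For the bound $a<\lambda_1^{D_0}$ in the necessity direction, the paper argues purely via eigenvalue comparison: since $u>0$ solves the linear problem with potential $bu^{p-1}$, one has $a=\lambda_1^D[bu^{p-1}]$, and then the proof of Lemma~\ref{lm3.2} (i.e.\ Theorem~\ref{th3.1} applied with $c=bu^{p-1}$ together with Lemma~\ref{lm3.1}) gives $\lambda_1^D[bu^{p-1}]<\lambda_1^{D_0}[bu^{p-1}]=\lambda_1^{D_0}$, the last equality coming from $b\equiv0$ on $\overline D_0$. Your route --- splitting the Feynman--Kac integral at $\tau_{D_0}$, observing that the post-$\tau_{D_0}$ contribution is strictly positive, and then pairing with $\varphi_1^{D_0}$ --- is a legitimate and self-contained alternative that avoids invoking the cut-off property a second time. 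The paper's argument is a one-liner once the machinery of Section~\ref{sec3} is available; yours is more hands-on but makes the role of the jump structure (exiting $D_0$ into $D\setminus\overline D_0$) explicit.

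For sufficiency, both constructions are the same in spirit. The paper inserts an intermediate step: it first picks $\eta\in C_c^\infty(D)$ supported in $D\setminus\overline D_0$ with $\lambda_1^D[\eta]\ge a$, then a function $v$ with $bv^{p-1}\ge\eta$, and takes $c\psi$ with $\psi$ the principal eigenfunction of $-(\Delta^\alpha)_{|D}+bv^{p-1}$ and $c$ large enough that $c\psi\ge v$; the supersolution property then follows from $(c\psi)^{p-1}\ge v^{p-1}$ everywhere. You instead use the eigenfunction $\psi_k$ of $-(\Delta^\alpha)_{|D}+q_k$ directly and exploit the compactness of $\mathrm{supp}[q_k]$ together with (H1) to push $bC^{p-1}\psi_k^{p-1}$ above $q_k$ there. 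Both reach the same supersolution, and both use intrinsic ultracontractivity to place $\varepsilon\varphi_1^D$ below it.
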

\begin{proof}
Suppose that there exists a solution $u$ to (\ref{eq1.1}).  Then,
by Lemma \ref{lm3.1},
\[
a=\lambda_1^D[bu^{p-1}]>\lambda_1^D.
\]
Moreover, by Lemma \ref{lm3.2} and (H1),
\[
a=\lambda_1^D[bu^{p-1}]<\lambda_1^{D_0}[bu^{p-1}]=\lambda_1^{D_0}.
\]
Now, assume that $\lambda_1^D<a<\lambda_1^{D_0}$.  By Theorem
\ref{th3.1} and the fact that $a<\lambda_1^{D_0}$, there exists a
positive  $\eta\in C_c^\infty(D)$ with supp$[\eta]\subset
D\setminus\overline D_0$ such that $\lambda_1^D[\eta]\ge a$. By
(H1), there exists a  positive function  $v\in C^\infty_c(D)$
 such that $bv^{p-1}\ge \eta$. We thus have that $\lambda_1^D[bv^{p-1}]\ge \lambda_1^D[\eta]\ge a$.
Let $\psi$ be the principal eigenfunction for
$\lambda_1^D[bv^{p-1}]$, and let $c>0$ be such that $c\psi\ge
v$. Then
\[
-(\Delta^{\alpha})_{|D}(c\psi)=a(c\psi)-b(c\psi)^p
+\big(\lambda_1^D[bv^{p-1}]-a\big)c\psi+c\psi b\big((c\psi)^{p-1}-v^{p-1}\big).
\]
Therefore $c\psi$ is a supersolution to (\ref{eq1.1}). It is clear
that for a sufficiently small $\varepsilon>0$, $\varepsilon
\varphi_1^D$ is a subsolution to (\ref{eq1.1}). Moreover, by the
Feynman-Kac formula (see e.g. \cite[Theorem 6.1.1]{FOT}) and ultracontractivity of $p_D$, for every
$x\in D$ we have
\begin{align*}
\psi(x)&=e^{t\cdot \lambda_1^D[bv^{p-1}]}
\mathbb E_xe^{-\int_0^tbv^{p-1}(X_r)\,dr}\mathbf{1}_{\{t<\tau_D\}}\psi(X_t)\\&
\ge e^{t(\lambda_1^D[bv^{p-1}]-\|bv^{p-1}\|_\infty)}
\mathbb E_x\mathbf{1}_{\{t<\tau_D\}}\psi(X_t)\\&
=e^{t(\lambda_1^D[bv^{p-1}]-\|bv^{p-1}\|_\infty)}
\int_Dp_D(t,x,y)\psi(y)\,dy\\&\ge
c_t\int_D\varphi_1^D(x)\varphi_1^D(y) \psi(y)\,dy \ge \bar c_t
\varphi_1^D(x).
\end{align*}
Thus, for a sufficiently small $\varepsilon>0$, $\varepsilon
\varphi_1^D\le c\psi$. Therefore, by Proposition \ref{prop3.3},
there exists a solution to (\ref{eq1.1}).
\end{proof}

\section{Obstacle problem and asymptotics as $p\rightarrow \infty$ for elliptic equations}

In this section, we provide three equivalent formulations of the obstacle problem (\ref{eq1.2}).
All three shall prove to be useful throughout the paper.
Next, we prove asymptotics  of steady-state  logistic equations with respect to the
increasing power of the absorption term. As a by-product, we get an existence result for the obstacle problem (\ref{eq1.2}). As in Sections \ref{sec3} and \ref{sec4}, we
assume that (H1), (H2) are in force and $D$ is a bounded Lipschitz domain.

\subsection{Obstacle problem}

\begin{definition}
\label{def2.1}
We say that  $u\in H^{\alpha}_0(D)$  is a {\em weak solution} to (\ref{eq1.2}) if $0<u\le \mathbb I_{D\setminus \overline D_0}$ a.e. and for every $\eta\in H^{\alpha}_0(D)$ such that $\eta\le \mathbb I_{D\setminus\overline D_0}$ a.e. we have
\begin{equation}
\label{eq3.dvi}
\EE_D(u,\eta-u)\ge a(u,\eta-u).
\end{equation}
\end{definition}

\begin{proposition}
\label{prop2.1}
Assume that $u$ is a quasi-continuous bounded strictly positive function on $D$ such that $u\le \mathbb I_{D\setminus\overline D_0}$ a.e. Then the following statements are equivalent:
\begin{enumerate}
\item[\rm(i)] $u$ is a weak solution to \mbox{\rm(\ref{eq1.2})}.
\item[\rm(ii)] There exists a positive  $\mu\in S_0(D)$ such that
\begin{enumerate}
\item[\rm(a)] $\EE_D(u,\eta)=a(u,\eta)-\int_D\tilde \eta\,d\mu,\quad  \eta\in H^{\alpha}_0(D)$,
\item[\rm(b)] $\int_D(u-\eta)\,d\mu=0$ for every quasi-continuous  function $\eta$ on $D$ such that $u\le\eta\le \mathbb I_{D\setminus\overline D_0}$ a.e.,
\end{enumerate}
\item[\rm(iii)] There exists a c\`adl\`ag process $M$ with $M_0=0$  and a positive measure $\mu\in \MM_{0,b}(D)$ such that
\begin{enumerate}
\item[\rm(a)] $M$ is a uniformly integrable martingale under the measure $P_x$ for q.e. $x\in D$, and 
\[
u(X_t)=a\int_t^{\tau_D}u(X_r)\,dr-\int_t^{\tau_D}\,dA^\mu_r-\int_t^{\tau_D}\,dM_r,\quad t\le\tau_D,\,\,\, P_x\mbox{-a.s.},\quad\mbox{q.e.}\,\,\, x\in D.
\]
\item[\rm (b)] For any quasi-continuous function $\eta$ on $D$ such that $u\le\eta\le \mathbb I_{D\setminus\overline D_0}$ a.e.,
\[
\int_0^{\tau_D}(\eta-u)(X_r)\,dA^\mu_r=0\quad P_x\mbox{-a.s.}
\]
for q.e. $x\in D$.
\end{enumerate}
\end{enumerate}
\end{proposition}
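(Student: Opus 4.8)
The strategy is to establish the cycle of implications (i)$\Rightarrow$(ii)$\Rightarrow$(iii)$\Rightarrow$(i), since each step translates the obstacle condition into a different language (variational inequality, complementarity with a smooth measure, probabilistic decomposition). The bridge between (i) and (ii) is the standard identification of the Lagrange multiplier: given the weak solution $u$, the linear functional $\eta\mapsto \EE_D(u,\eta)-a(u,\eta)$ on $H^\alpha_0(D)$ is, by (\ref{eq3.dvi}), nonnegative on the cone $\{\eta\le 0\}\cap H^\alpha_0(D)$ (test with $u+\eta$ and $u-\eta$ where admissible), hence by the Riesz--Schwartz representation of nonnegative distributions it is given by integration against a positive measure $\mu$; the coercivity estimate (\ref{eq2.1.equiv1}) together with the Sobolev embedding (\ref{eq2.1.equiv15r}) forces $\mu\in S_0(D)$, i.e.\ $\mu\in H^{-\alpha}(D)$ via (\ref{eq.ppt2}). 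This gives (ii)(a). For (ii)(b), the complementarity $\int_D(u-\eta)\,d\mu=0$ follows by testing (\ref{eq3.dvi}) with $\eta$ itself and with $2u-\eta$ (both admissible when $u\le\eta\le\mathbb I_{D\setminus\overline D_0}$) and combining with (a); one must check that $u-\eta$ is a legitimate test function, which holds since $u,\eta\in H^\alpha_0(D)$ are both bounded.

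**From (ii) to (iii).** Here I would invoke the probabilistic machinery already assembled in Section~\ref{sec2.2}. Rewriting (ii)(a) as $\EE_D(u,\eta)=(au,\eta)-\int_D\tilde\eta\,d\mu$ says exactly that $u=R^D(au\cdot m)-R^D\mu$ q.e., i.e.\ $u+R^D\mu=R^D(au)$; applying Lemma~\ref{lm2.4.1} to the function $u+R^D\mu$ (with the measure $(au)\cdot m + \mu\in\MM_{0,b}(D)$, which is bounded because $u$ is bounded, $aR^D1$ is bounded by (\ref{eq.estgfu}), and $\mu$ is bounded smooth) produces a uniformly integrable martingale $M$ with
\[
(u+R^D\mu)(X_t)=\int_t^{\tau_D}au(X_r)\,dr-\int_t^{\tau_D}\,dM_r,\quad t\le\tau_D,\ P_x\text{-a.s.}
\]
Using that $R^D\mu(X_t)=\E_{X_t}\int_0^{\tau_D}\,dA^\mu_r$ is, by the Markov property and Revuz duality (\ref{eq.ppt3}), equal to $\E_x\big(A^\mu_{\tau_D}-A^\mu_t\mid\FF_t\big)$, one absorbs the difference of the $A^\mu$-terms into a martingale correction and arrives at the decomposition in (iii)(a). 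The complementarity (iii)(b) is then the Revuz-duality transcription of (ii)(b): $\int_D(\eta-u)\,d\mu=0$ for the specific quasi-continuous $\eta$'s means $\E_x\int_0^{\tau_D}(\eta-u)(X_r)\,dA^\mu_r=0$ q.e., and since the integrand is nonnegative (as $\eta\ge u$ a.e., hence q.e.\ against the Revuz measure $\mu$) this upgrades to the P$_x$-a.s.\ pathwise statement.

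**From (iii) back to (i).** Given the martingale representation, one reverses the first step: taking $t=0$ and $P_x$-expectation in (iii)(a) yields $u(x)=aR^Du(x)-R^D\mu(x)$ q.e., which by (\ref{eq.dfl101}) is the weak formulation $\EE_D(u,\eta)=a(u,\eta)-\int_D\tilde\eta\,d\mu$ for all $\eta\in H^\alpha_0(D)$. Then for admissible $\eta$ with $\eta\le\mathbb I_{D\setminus\overline D_0}$ one writes $\EE_D(u,\eta-u)-a(u,\eta-u)=-\int_D(\tilde\eta-u)\,d\mu$ and splits $\tilde\eta-u=(\eta-u)^+-(\eta-u)^-$; on $\{u\le\eta\}$ the complementarity (iii)(b), reread via Revuz duality as $\int_D(\eta\vee u-u)\,d\mu=0$, kills the positive part, while $\int_D(\eta-u)^-\,d\mu\ge0$ since $\mu\ge0$, yielding $\EE_D(u,\eta-u)\ge a(u,\eta-u)$, which is (\ref{eq3.dvi}). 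The hypotheses that $u$ is quasi-continuous, bounded, strictly positive and $\le\mathbb I_{D\setminus\overline D_0}$ are exactly what make all the test functions and measures in the argument lie in the right spaces.

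**Main obstacle.** The delicate point is the regularity of the obstacle multiplier $\mu$: one must show not merely that $\mu$ is a positive Radon measure but that it belongs to $S_0(D)\subset H^{-\alpha}(D)$ and is \emph{bounded} and $\EE_D$-smooth, so that Lemma~\ref{lm2.4.1} applies and $A^\mu$ exists as a PCAF. This requires care because the obstacle $\mathbb I_{D\setminus\overline D_0}$ is $+\infty$ on $D_0$, so a priori $\mu$ could concentrate mass near $\partial D_0$; the boundedness must be extracted from testing (\ref{eq3.dvi}) against $\mathbb I_{D\setminus\overline D_0}\wedge n$-type truncations and using $u\le\mathbb I_{D\setminus\overline D_0}$ together with $\EE_D$-coercivity. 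A secondary subtlety is justifying the a.e.-to-q.e.\ passage for the inequalities $\eta\ge u$ when integrating against $\mu$, which relies on $\mu\ll\mathrm{Cap}_D$ and on choosing quasi-continuous versions throughout.
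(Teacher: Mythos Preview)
Your approach for (i)$\Leftrightarrow$(ii) is essentially the paper's: both of you extract the multiplier $\mu$ via the Riesz representation of the nonnegative functional $\eta\mapsto a(u,\eta)-\EE_D(u,\eta)$ on $C_c^\infty(D)$, then read off $\mu\in S_0(D)$ from the $H^{-\alpha}$-bound, and derive the complementarity from the variational inequality. Your ``test with $\eta$ and $2u-\eta$'' trick for (ii)(b) is a clean variant of the paper's one-sided inequality \eqref{eq4.3} combined with positivity of $\mu$; both arguments, as written, literally cover only $\eta\in H^\alpha_0(D)$ and need a short additional step (showing $\mu$ is supported on $\{u=1\}\cap(D\setminus\overline D_0)$) to reach general quasi-continuous $\eta$.

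The genuine difference is in (ii)$\Leftrightarrow$(iii): the paper simply cites \cite[Proposition~3.16]{K:SM}, whereas you sketch a direct argument via Lemma~\ref{lm2.4.1} and Revuz duality. Your route is more self-contained and does work, but note one ordering issue: Lemma~\ref{lm2.4.1} requires $\mu\in\MM_{0,b}(D)$, i.e.\ \emph{bounded}, while (ii) only gives $\mu\in S_0(D)$. Boundedness follows once you know (ii)(b), since then $\mu$ is carried by $\{u=1\}$ and $\mu(D)=\int_{\{u=1\}}u\,d\mu\le\int_D u\,d\mu=a\|u\|_{L^2}^2-\EE_D(u,u)<\infty$; so you should establish (ii)(b) (and this support property) before invoking Lemma~\ref{lm2.4.1} in the passage to (iii). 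Your ``Main obstacle'' paragraph correctly flags exactly this point.
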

\begin{proof}
The equivalence of (iii) and  (ii)  follows from \cite[Proposition 3.16]{K:SM}. 
(i) $\Rightarrow$ (ii).  For  $\xi\in C_c^\infty(D)$, we let
\[
I(\xi):=-\EE_D(u,\xi)+a(u,\xi)_{L^2(D;m)}.
\]
By (\ref{eq3.dvi}), $I(\xi)\ge 0$ for $\xi\ge 0$. Therefore, by  Riesz's theorem, there exists a positive Radon measure $\nu$ on $D$ such that $I(\xi)=\int_D\xi\,d\mu$, $\xi\in C_c^\infty(D)$. Hence
\begin{equation}
\label{eq3.dvi1}
\EE_D(u,\eta)=a(u,\eta)_{L^2}-\int_D \eta\,d\mu,\quad \eta\in C_c^\infty(D).
\end{equation}
From this one can easily conclude that $\mu\in S_0(D)$ and that  the above equation holds for any quasi-continuous $\eta\in H^{\alpha}_0(D)$.
By  (\ref{eq3.dvi}) and  (\ref{eq3.dvi1}),
\begin{equation}
\label{eq4.3}
\int_D(\eta-u)\,d\mu\le 0
\end{equation}
for any quasi-continuous  $\eta\in H^{\alpha}_0(D)$ such that $\eta\le \mathbb{I}_{D\setminus D_0}$ a.e.
Thus, (ii)(b) follows. The implication (ii) $\Rightarrow$ (i) is trivial. 
\end{proof}

\begin{proposition}
\label{prop.236dj}
If $u$ is a  weak solution to \mbox{\rm(\ref{eq1.2})}, then $u\in C_0(D)$.
\end{proposition}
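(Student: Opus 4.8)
The plan is to use the probabilistic representation of the weak solution, i.e.~part (iii) of Proposition~\ref{prop2.1}, which gives a c\`adl\`ag martingale $M$ and a positive measure $\mu\in\MM_{0,b}(D)$ with
\[
u(X_t)=a\int_t^{\tau_D}u(X_r)\,dr-\int_t^{\tau_D}\,dA^\mu_r-\int_t^{\tau_D}\,dM_r,\quad t\le\tau_D,\quad P_x\text{-a.s.}
\]
for q.e.~$x\in D$. Taking expectations and using the Revuz formula \eqref{eq.ppt3} together with the Feynman--Kac formula, one sees that $u$ admits the pointwise (q.e.) representation $u=aR^D u-R^D\mu$ on $D$; more conveniently, absorbing the $au$-term into the killing via Feynman--Kac, one gets $u(x)=\mathbb E_x\int_0^{\tau_D}e^{at}\big(au(X_t)\,dt - dA^\mu_t\big)$ type identities, but the cleanest route is: write $u=aR^D u-R^D\mu=R^D(au\cdot m-\mu)$ and note $au\cdot m-\mu$ is a (signed) measure in $\MM_{0,b}(D)$ because $u$ is bounded and $\mu$ is a bounded smooth measure. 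So it suffices to show: \emph{for any $\nu\in\MM_{0,b}(D)$, the potential $R^D\nu$ has a version in $C_0(D)$.}

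**Key steps.** First, I would reduce to positive $\nu$ by splitting $\nu=\nu^+-\nu^-$. Second, for positive $\nu$, approximate: since $\nu$ is bounded and smooth, choose an increasing sequence of closed sets $F_n\subset D$ with $\nu(D\setminus F_n)\to0$ and $\mathrm{Cap}_D(K\setminus F_n)\to0$ for every compact $K$; then $\nu_n:=\mathbf 1_{F_n}\cdot\nu$ increases to $\nu$ and each $R^D\nu_n\le R^D\nu$, with $R^D\nu$ bounded (this uses $S_0(D)$-membership of $\mu$, or more simply the bound \eqref{eq.estgfu} applied after noting $\|\nu\|<\infty$ and the Green-function estimate \eqref{eq.estgfu1}--\eqref{eq.estgfu2} — indeed $R^D\nu(x)=\int_D G_D(x,y)\,\nu(dy)$ with $G_D(x,\cdot)$ uniformly in $L^p$ for $p<d/(d-2\alpha)$, so boundedness follows if $\nu$ has bounded density, but for a general bounded measure one invokes that $u$ itself is given bounded). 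Third, for $\nu$ with bounded \emph{density} $f\in\BB_b(D)$, i.e.~$R^D\nu=R^D f$, use the strong Feller and Feller ($C_0$-preserving) property of $\mathbb X^D$: writing $R^D f=\int_0^\infty P^D_t f\,dt$ and using $\sup_x R^D 1<\infty$ to control the tail, plus $P^D_t(\BB_b)\subset C_b$ and $P^D_t(C_0(D))\subset C_0(D)$ (which follows from doubly-Feller-ness of $\mathbb X$ together with Dirichlet regularity of the Lipschitz domain $D$, already established in the excerpt), one concludes $R^D f\in C_0(D)$. For the general bounded smooth $\mu$ appearing here, one combines this with the intrinsic-ultracontractivity bound \eqref{eq.iuc12u}: $P^D_t\mu(x)\le\beta_t e^{-t\lambda_1^D}\varphi_1^D(x)\,(\varphi_1^D,\mu)$, which shows $R^D\mu=\int_0^1 P^D_t\mu\,dt+\int_1^\infty P^D_t\mu\,dt$ has a continuous part (finite-$t$ integral, by strong Feller applied to the already-regularized $P^D_s\mu$ for small $s>0$) and a tail dominated by a constant multiple of $\varphi_1^D\in C_0(D)$; uniform convergence of the approximations then transfers the $C_0(D)$-property to the limit.

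**Main obstacle.** The delicate point is handling the measure $\mu$ rather than a function: $P^D_t\mu$ for small $t$ is not obviously continuous just from strong Feller-ness, since strong Feller-ness regularizes $\BB_b$ functions, not measures. The fix is the semigroup property combined with intrinsic ultracontractivity: for $t>0$, $P^D_t\mu=P^D_{t/2}(P^D_{t/2}\mu)$, and $P^D_{t/2}\mu$ is \emph{bounded} by \eqref{eq.iuc12u} (and lies in $\BB_b(D)$), so $P^D_t\mu=P^D_{t/2}$ applied to a bounded function is in $C_b(D)$; the $C_0$-decay at $\partial D$ comes from the $\varphi_1^D$-factor in the same estimate. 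Thus the heart of the argument is assembling the two ingredients the paper has carefully prepared — doubly-Feller/$C_0$-preservation of $\mathbb X^D$ on a Dirichlet-regular domain, and intrinsic ultracontractivity — to upgrade the a.e.~identity $u=R^D(au\cdot m-\mu)$ to a genuine $C_0(D)$ statement, after which the quasi-continuous version of $u$ must coincide with this continuous version q.e., hence $u\in C_0(D)$.
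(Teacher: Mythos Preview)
The paper's own proof is a one-line citation to \cite[Proposition~4.2]{K:arx1}, so there is no in-paper argument to compare against. Your proposal, however, has a genuine gap: the reduction ``it suffices to show $R^D\nu\in C_0(D)$ for any $\nu\in\MM_{0,b}(D)$'' collapses the problem to a false statement. Your intrinsic-ultracontractivity argument correctly gives $P^D_t\mu\in C_0(D)$ for each \emph{fixed} $t>0$, and hence $\int_\varepsilon^\infty P^D_t\mu\,dt\in C_0(D)$ for every $\varepsilon>0$; but to pass to $R^D\mu$ you would need $\int_0^\varepsilon P^D_t\mu\,dt\to0$ \emph{uniformly in $x$} as $\varepsilon\downarrow0$, and nothing you wrote supplies this --- the phrase ``strong Feller applied to the already-regularized $P^D_s\mu$'' again only treats $t$ bounded away from $0$. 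The obstruction is real: equilibrium potentials of compact sets with irregular points are bounded potentials of finite-energy smooth measures that fail to be continuous. So potentials of measures in $S_0(D)$, even bounded ones, need not lie in $C_0(D)$.

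What is missing is the obstacle structure itself. The reaction measure $\mu$ is not arbitrary, and more to the point one can dominate $u$ by a \emph{continuous} barrier $\hat h\le\mathbb I_{D\setminus\overline D_0}$ (for instance $\hat h=1+a\|u\|_\infty R^{D_0}1$, continuous since $(P^{D_0}_t)$ is strongly Feller and $R^{D_0}1$ is bounded), so that $u$ solves the same variational inequality with $\hat h$ in place of $\mathbb I_{D\setminus\overline D_0}$. With a continuous obstacle, a penalization argument combined with the Feller property of $\mathbb X^D$ yields $u\in C_0(D)$ --- this is exactly the route the paper takes in the parabolic analogue, Proposition~\ref{prop2.1vv}, and is presumably what \cite{K:arx1} does in the elliptic case. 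A secondary issue: you invoke Proposition~\ref{prop2.1}(iii), whose hypotheses already include boundedness of $u$; that is not part of Definition~\ref{def2.1} and would have to be established first (it follows from the same barrier comparison, but is not free).
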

\begin{proof}
Follows from \cite[Proposition 4.2]{K:arx1}.
\end{proof}

\subsection{Existence and asymptotics}

Let us recall the definition of a weak solution to
(\ref{eq1.1}) (see Remark \ref{rem4.2}).
\begin{definition}
We say that a strictly positive function $u_p\in H^{\alpha}_0(D)$ is a {\em weak
solution} to (\ref{eq1.1}) if
\begin{equation}
\label{eq5.0} \EE_D(u_p,\eta)= (au_p,\eta)-(bu_p^p,\eta),\quad
\eta\in H^{\alpha}_0(D).
\end{equation}
\end{definition}

\begin{theorem}
\label{th5.1}
\begin{enumerate}
\item[\rm(i)] For every $a\in (\lambda_1^D,\lambda_1^{D_0})$ there exists
a unique bounded weak solution $u$ to {\rm (\ref{eq1.2})}.

\item[\rm(ii)]Let $u_p$, $p>1$, be a weak solution to  {\rm{(\ref{eq1.1})}}. Then
\[
 \|u_p-u\|_\infty+\EE_D(u_p-u,u_p-u)\rightarrow 0 \quad\mbox{\rm as }p\rightarrow \infty.
\]
\end{enumerate}
\end{theorem}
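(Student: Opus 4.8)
The plan is to obtain the existence part of (i) and all of (ii) at once by passing to the limit $p\to\infty$ in the logistic problems, the uniqueness statement in (i) being the one ingredient I would take from \cite{K:arx1}. For $a\in(\lambda_1^D,\lambda_1^{D_0})$ and $p>1$, Theorem \ref{th3.2} and Proposition \ref{prop3.2} give a unique weak solution $u_p$ of \eqref{eq1.1}; by Remark \ref{rem4.2} it satisfies $u_p=R^D(au_p-bu_p^p)$ and is the principal eigenfunction of $-(\Delta^{\alpha})_{|D}+bu_p^{p-1}$ with $\lambda^D_1[bu_p^{p-1}]=a$. The first and decisive step is the uniform energy bound $\sup_{p>1}\EE_D(u_p,u_p)<\infty$: testing \eqref{eq5.0} with $\eta=u_p$ gives $\EE_D(u_p,u_p)+\int_Dbu_p^{p+1}\,dm=a\|u_p\|_{L^2(D;m)}^2$, so it suffices to bound $\|u_p\|_{L^2(D;m)}$. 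If $\|u_{p_n}\|_{L^2(D;m)}\to\infty$ along a subsequence, the normalizations $\hat u_n:=u_{p_n}/\|u_{p_n}\|_{L^2(D;m)}$ satisfy $\EE_D(\hat u_n,\hat u_n)+\int_Dbu_{p_n}^{p_n-1}\hat u_n^2\,dm=a$, so they are bounded in $H^{\alpha}_0(D)$ and, by \eqref{eq2.1.equiv15r}, a further subsequence converges weakly in $H^{\alpha}_0(D)$ and strongly in $L^2(D;m)$ to some $\hat u\ge0$ with $\|\hat u\|_{L^2(D;m)}=1$. The bound $\int_Dbu_{p_n}^{p_n-1}\hat u_n^2\,dm\le a$, together with $\|u_{p_n}\|_{L^2(D;m)}\to\infty$ and the lower bound of $b$ on compact subsets of $D\setminus\overline D_0$ (H1), forces $\hat u=0$ a.e.\ on $D\setminus\overline D_0$; since $D_0$ is Lipschitz, $\hat u\in H^{\alpha}_0(D_0)$, and letting $n\to\infty$ in the above identity tested against $\eta\in C_c^\infty(D_0)$ (where $b\equiv0$) gives $\EE_{D_0}(\hat u,\eta)=a(\hat u,\eta)$, i.e.\ $\hat u$ is a nonnegative nonzero eigenfunction of $-(\Delta^{\alpha})_{|D_0}$ for the eigenvalue $a$; Jentzsch's theorem then forces $a=\lambda_1^{D_0}$, contradicting $a<\lambda_1^{D_0}$.

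Next I would derive uniform $L^\infty$ bounds and control the absorption term. From $u_p=R^D(au_p-bu_p^p)\le aR^Du_p$ and positivity of $R^D$ one gets $u_p\le a^k(R^D)^ku_p$ for all $k$; taking $k$ large enough that $(R^D)^k\colon L^{q_0}(D;m)\to L^\infty(D;m)$ (Sobolev-type iteration of the Green operator using \eqref{eq.estgfu1}) and using that $\{u_p\}$ is bounded in $L^{q_0}(D;m)$, $q_0\in(1,2d/(d-2\alpha))$, by the energy bound and \eqref{eq2.1.equiv15r}, we obtain $\sup_p\|u_p\|_\infty<\infty$; by intrinsic ultracontractivity (\eqref{eq.iuc12u}) the $u_p$ are moreover dominated uniformly by a multiple of $\varphi_1^D$, hence uniformly small near $\partial D$. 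A uniform lower bound $u_p\ge\varepsilon_0\varphi_1^D$ for $p$ large follows from the sub/supersolution scheme in the proof of Theorem \ref{th3.2}: the subsolutions $\varepsilon\varphi_1^D$ of \eqref{eq1.1} are admissible for a fixed $\varepsilon=\varepsilon_0$ once $p$ is large, since $\varepsilon$ need only satisfy $\varepsilon\le\|\varphi_1^D\|_\infty^{-1}\big((a-\lambda_1^D)/\|b\|_\infty\big)^{1/(p-1)}\to\|\varphi_1^D\|_\infty^{-1}$, and Proposition \ref{prop3.3} together with the uniqueness of $u_p$ then forces $u_p\ge\varepsilon_0\varphi_1^D$. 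Finally $R^D(bu_p^p)=aR^Du_p-u_p$ is uniformly bounded in $L^\infty(D;m)$, and pairing with $\varphi_1^D$ (using $R^D\varphi_1^D=(\lambda_1^D)^{-1}\varphi_1^D$ and symmetry of $G_D$) gives $\sup_p\int_D\varphi_1^D\,bu_p^p\,dm<\infty$; with the smallness of $u_p$ near $\partial D$ this yields $\sup_p\int_Dbu_p^p\,dm<\infty$, so the measures $\nu_p:=bu_p^p\,m$ are uniformly bounded.

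Now I would extract a subsequence along which $u_{p_n}$ converges weakly in $H^{\alpha}_0(D)$, in $L^2(D;m)$ and a.e.\ to $u$, and — via a reference measure $\nu\in S_0(D)$ with bounded potential, as in the proof of Theorem \ref{th3.1} — also $\mathbb E_x\int_0^{\tau_D}|u_{p_n}-u|(X_r)\,dr\to0$ q.e.; simultaneously $\nu_{p_n}$ converges in $H^{-\alpha}(D)$ (being $=au_{p_n}+\Delta^{\alpha}u_{p_n}$, bounded there) and weakly, as bounded measures, to a positive $\mu\in\MM_{0,b}(D)$, and the limit satisfies $\EE_D(u,\eta)=a(u,\eta)-\int_D\tilde\eta\,d\mu$ for all $\eta\in H^{\alpha}_0(D)$, with $u\ge\varepsilon_0\varphi_1^D>0$. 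Two structural facts must then be checked: (a) $u\le\mathbb I_{D\setminus\overline D_0}$ a.e., i.e.\ $u\le1$ a.e.\ on $D\setminus\overline D_0$ — were $u>1+\delta$ on a positive-measure set $A\subset D\setminus\overline D_0$, a.e.\ convergence would give $u_{p_n}>1+\delta/2$ on sets of measure $\ge m(A)/2$, whence $\int_Kbu_{p_n}^{p_n}\,dm\ge(\inf_Kb)(1+\delta/2)^{p_n}m(A)/2\to\infty$ for a suitable compact $K\subset D\setminus\overline D_0$, contradicting the boundedness of $\nu_{p_n}$; and (b) the complementarity $\int_D(u-\eta)\,d\mu=0$ for every quasi-continuous $\eta$ with $u\le\eta\le\mathbb I_{D\setminus\overline D_0}$ a.e., obtained by localizing the mass of $\nu_{p_n}$ — $\nu_{p_n}(D_0)=0$ because $b\equiv0$ on $D_0$, while on $D\setminus\overline D_0$ the mass concentrates as $n\to\infty$ on the contact set $\{u=1\}$ (on $\{u<1\}$ the penalization $bu_{p_n}^{p_n}$ tends to $0$). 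By Proposition \ref{prop2.1}, $u$ is a weak solution of \eqref{eq1.2}, so $u\in C_0(D)$ by Proposition \ref{prop.236dj}; uniqueness (\cite{K:arx1}) makes this limit independent of the subsequence, so the whole family $u_p$ converges to $u$. This proves (i) and reduces (ii) to upgrading the convergence.

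It remains to establish the two convergences in (ii). The uniform convergence $\|u_p-u\|_\infty\to0$ is the main obstacle. Near $\partial D$ it is controlled by the uniform domination $u_p+u\le C\varphi_1^D$ coming from intrinsic ultracontractivity; in the interior one compares the Feynman-Kac representation $u_p(x)=a\,\mathbb E_x\int_0^{\tau_D}e^{-\int_0^tbu_p^{p-1}(X_r)\,dr}u_p(X_t)\,dt$ (cf.\ \cite[Theorem 6.1.1]{FOT}) with the representation of $u$ given by Proposition \ref{prop2.1}(iii), and shows that the exponential multiplicative functionals $e^{-\int_0^tbu_p^{p-1}(X_r)\,dr}$ converge, uniformly in the starting point $x$, to $\mathbf 1_{\{t<\zeta\}}$, $\zeta$ being the hitting time of the contact region, after which one passes to the limit by dominated convergence; this uses \eqref{eq.tg.1}, \eqref{eq.iuc12u} and the analytic device of \cite{BM1,RT} transported to the probabilistic setting. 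Granting the uniform convergence, the energy convergence follows: by weak lower semicontinuity $\liminf\EE_D(u_p,u_p)\ge\EE_D(u,u)$, while $\EE_D(u_p,u_p)=a\|u_p\|_{L^2(D;m)}^2-\int_Du_p\,d\nu_p\to a\|u\|_{L^2(D;m)}^2-\int_Du\,d\mu=\EE_D(u,u)$ — the passage $\int_Du_p\,d\nu_p\to\int_Du\,d\mu$ using the uniform convergence of $u_p$ and that $\mu$ is carried by a compact subset of $D$, and the last identity being Proposition \ref{prop2.1}(ii)(a) with $\eta=u$ — so $\EE_D(u_p,u_p)\to\EE_D(u,u)$, which with the weak convergence gives $\EE_D(u_p-u,u_p-u)\to0$. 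I expect the uniform-in-$x$ passage to the limit in the Feynman-Kac functionals, hand in hand with pinning down the limiting measure $\mu$ and its behaviour near $\partial D_0$, to be the technically heaviest part of the proof.
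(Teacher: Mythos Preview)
Your route is genuinely different from the paper's, and the difference is not cosmetic: it concerns the \emph{order} in which the two convergences in (ii) are obtained, and this is precisely where your proposal develops a gap.

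The paper proceeds as follows. First, a uniform $L^\infty$ bound is obtained \emph{directly} by constructing a single supersolution $c_{k_0}\psi_{k_0}$ valid for all $p\ge2$, where $\psi_{k_0}$ is the principal eigenfunction of $-(\Delta^{\alpha})_{|D}+q_{k_0}$ and $k_0$ is chosen via Theorem~\ref{th3.1} so that $\lambda_1^D[q_{k_0}]\ge a$; this bypasses your contradiction/bootstrap argument entirely. Second, and crucially, the paper proves the \emph{energy} convergence $\EE_D(u_p-u,u_p-u)\to0$ \emph{before} attacking the uniform convergence. The device is simply to test \eqref{eq5.0} with $\eta=u_p-u$ and observe that on $\{u_p\le u\}$ one has $u_p\le1$ on $D\setminus\overline D_0$ (since $u\le\mathbb I_{D\setminus\overline D_0}$) and $b=0$ on $D_0$, whence $bu_p^p\le\|b\|_\infty$ there and $\int_D(u_p-u)\,d\mu_p\ge -\|b\|_\infty\int_{\{u_p\le u\}}|u_p-u|\to0$. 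The variational inequality \eqref{eq3.dvi} is then verified directly (no need to identify the support of the limit measure). Third, the uniform convergence is obtained \emph{from} the energy convergence: apply It\^o's formula to $|u_p-u|^2$ along $\BX^D$, split $\mathbb E_x\int_0^{\tau_D}|u_p-u|(X_r)\,dA^\mu_r$ at a small time $h$, control the tail $[h,\infty)$ by ultracontractivity and the bound $\int_D|u_p-u|\,d\mu\le c\,\EE_D(u_p-u,u_p-u)^{1/2}$ (this uses $\mu\in S_0(D)$), and control $[0,h]$ by $\|P^D_hu-u\|_\infty$ and the Feller property.

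Your plan reverses the order: uniform first, then energy via ``weak convergence plus convergence of norms''. The energy step would indeed follow once uniform convergence is in hand, but your argument for uniform convergence is where the proposal breaks down. You propose to compare the multiplicative Feynman--Kac representation $u_p(x)=a\,\mathbb E_x\int_0^{\tau_D}e^{-\int_0^t bu_p^{p-1}(X_r)\,dr}u_p(X_t)\,dt$ with the \emph{additive} representation of $u$ from Proposition~\ref{prop2.1}(iii), and to show that $e^{-\int_0^t bu_p^{p-1}(X_r)\,dr}\to\mathbf1_{\{t<\zeta\}}$ uniformly in $x$. This is not substantiated: the potentials $bu_p^{p-1}$ are not controlled pointwise (they blow up on the contact set and vanish off it, but you have no quantitative information on \emph{where} the contact set is or how $u_p$ approaches $1$ near it), and the limit object is an additive functional, not a killing time, so the comparison is not of like with like. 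Without energy convergence already available, there is no obvious handle on $\int_D|u_p-u|\,d\mu$, which is exactly the quantity the paper controls via $\mu\in S_0(D)$. In short, the paper's ordering is what makes the uniform step tractable; your ordering leaves you with the hardest step unsupported.

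Your a~priori bounds (the contradiction argument for $\sup_p\|u_p\|_{L^2}$, the bootstrap to $L^\infty$, the mass bound on $\nu_p$) are correct alternatives to the paper's constructions, but they are longer and buy nothing extra here. The identification of the limit via Proposition~\ref{prop2.1}(ii) would also work, but the verification of (ii)(b) near $\partial D_0$ is delicate; the paper's direct passage to the variational inequality via the estimate $\int_D(\gamma\eta-u_p)\,d\mu_p\le\|b\|_\infty\gamma^p\int|\gamma\eta-u_p|\to0$ is both shorter and avoids this issue.
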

\begin{proof}
Choose $\{q_k\}\subset\BB^+_b(D)$ so that supp$[q_k]\subset
D\setminus \overline D_0,\, k\ge 1$, and for every compact $K\subset
D\setminus \overline D_0$, $\inf_{x\in K}q_k(x)\nearrow \infty$.
Let  $\psi_k$ be the principal
eigenfunction for $-(\Delta^{\alpha})_{|D}+q_k$. Then for any $c\ge 0,\, k\ge 1$,
\begin{equation}
\label{eq5.1.1.2}
-(\Delta^{\alpha})_{|D}(c\psi_k)=a(c\psi_k)-b(c\psi_k)^p+(\lambda_1^D[q_k]-a)(c\psi_k)+b(c\psi_k)^p-q_k(c\psi_k).
\end{equation}
By the fact that $a<\lambda_1^{D_0}$ and Theorem \ref{th3.1}, there
exists $k_0\ge 1$ such that $\lambda_1^D[q_{k_0}]-a\ge 0$. Observe that
\[
b(c\psi_{k_0})^p-cq_{k_0}\psi_{k_0}\ge \mathbf{1}_{K_{k_0}}c\psi_{k_0}(c^{p-1}\inf_{x\in K_{k_0}}b(x)\inf_{x\in K_{k_0}}\psi_{k_0}(x)-\sup_{x\in D}q_{k_0}(x)),
\]
where $K_{k_0}=\mbox{supp}[q_{k_0}]$. Since $K_{k_0}$ is compact
and $K_{k_0}\subset D\setminus \overline D_0$,  (H1) implies that
\[
d_{k_0}:=\inf_{x\in K_{k_0}}b(x)\inf_{x\in K_{k_0}}\psi_{k_0}(x)>0.
\]
Hence, since $q_{k_0}$ is bounded, there exists $c_{k_0}$
such that for any $p\ge 2$,
\[
b(c_{k_0}\psi_{k_0})^p-c_{k_0}q_{k_0}\psi_{k_0}\ge 0.
\]
Therefore $c_{k_0}\psi_{k_0}$ is a supersolution to (\ref{eq1.1}).
Since $\lambda_1^D<a$, we easily conclude that $\varepsilon
\varphi^D_1$ is a subsolution to (\ref{eq1.1}) for a sufficiently
small $\varepsilon >0$. Moreover, by \cite[Theorem 3.4]{DS}, for a
sufficiently small $\varepsilon>0$, $\varepsilon \varphi^D_1\le
c_{k_0}\psi_{k_0}$. By Propositions \ref{prop3.3} and
\ref{prop3.2},
\begin{equation}
\label{eq5.1}
\varepsilon\varphi^D_1(x)\le u_p(x)\le c_{k_0}\psi_{k_0}(x),\quad x\in D,\quad p\ge 2.
\end{equation}
By the definition of a weak solution to (\ref{eq1.1}),
\begin{equation}
\label{eq5.2} \EE_D(u_p,\eta)+\int_D\eta\,d\mu_p=a(u_p,\eta),\quad
\eta\in D(\EE_D),
\end{equation}
where $\mu_p=bu^p_p\cdot m$. Taking $\eta=u_p$ as a test function
and using (\ref{eq5.1}), we conclude that
\begin{equation}
\label{eq5.4}
\sup_{p\ge 2}\EE_D(u_p,u_p)<\infty,\qquad
\sup_{p\ge 2}\int_D bu^{p+1}_p\,dm<\infty.
\end{equation}
From this and (\ref{eq5.2}) we deduce that $\sup_{p\ge
2}\|\mu_p\|_{H^{-\alpha}(D)}<\infty$ (cf. \eqref{eq.ppt2}). Therefore, there exists $u\in
H^{\alpha}_0(D)$ and $\mu\in  S_0(D)$ such that, up to a subsequence,
$\mu_p\rightarrow \mu$ weakly in $H^{-\alpha}(D)$ and $u_p\rightarrow
u$ weakly in $H^{\alpha}_0(D)$. Moreover, since $H^{\alpha}_0(D)$ is compactly
embedded in $L^q(D;m)$ for  $q\in [1,2d(d-2\alpha))$ (cf. Section \ref{sec2.1}), up to a subsequence,
$u_p\rightarrow u$  a.e. From the second inequality in
(\ref{eq5.4}) and (H1), we easily deduce  that
\begin{equation}
\label{eq5.5} u\le \mathbb{I}_{D\setminus\overline D_0}\quad
m\mbox{-a.e.}
\end{equation}
 Taking $\eta=u_p-u$ in (\ref{eq5.2}) we get
\begin{equation}
\label{eq5.3}
\EE_D(u_p-u,u_p-u)+\EE_D(u,u_p-u)+\int_D(u_p-u)\,d\mu_p=a(u_p,u_p-u).
\end{equation}
Next, by (\ref{eq5.5}),
\[
\int_D(u_p-u)\,d\mu_p\ge \int_{\{u_p\le u\}}bu^p_p(u_p-u) \ge
-\|b\|_\infty\int_{\{u_p\le u\}}|u_p-u|. \quad
\]
Substituting  into \eqref{eq5.3}, and  using  weak convergence of $\{u_p\}$ in
$D(\EE_D)$ and
strong convergence of $\{u_p\}$ in $L^q(D;m)$  we conclude that, up
to a subsequence,
\[
\EE_D(u_p-u,u_p-u)\rightarrow 0.
\]
Let $\eta\in H^{\alpha}_0(D)$ be such that  $\eta\le
\mathbb{I}_{D\setminus\overline D_0}$  a.e., and let $\gamma\in
(0,1)$. By (H1),
\begin{align}
\label{eq5.tm} \int_D(\gamma\eta-u_p)\,d\mu_p&\le
\int_{\{\gamma\eta\ge u_p\}}(\gamma\eta-u_p)bu^p_p\,dm\nonumber\\
&\le \|b\|_\infty\int_{\{\gamma\eta\ge
u_p\}}|\gamma\eta-u_p|\gamma^p\,dm\rightarrow 0.
\end{align}
Therefore, by already  proved convergence properties  of $\{u_p\}$, and
(\ref{eq5.2}), we get that for every $\eta\in H^{\alpha}_0(D)$  such that
$\eta\le \mathbb{I}_{D\setminus\overline D_0}$  a.e.,
\begin{equation}
\label{eq5.6}
\EE_D(u,\eta-u)\ge a(u,\eta-u).
\end{equation}
This together with (\ref{eq5.5}) implies that   $u$ is a weak solution to
(\ref{eq1.2}). Moreover, by Proposition \ref{prop.236dj}, $u$ is continuous. By the uniqueness result for  (\ref{eq1.2}) (see
\cite{K:arx1}), $\EE_D(u_p-u,u_p-u)\rightarrow 0$.

As for the uniform convergence in (ii), by Proposition
\ref{prop2.1}(iii), and  Lemma \ref{lm2.4.1},
\begin{align*}
u_p(X_t)-u(X_t)&= a\int_t^{\tau_D}(u_p-u)(X_r)\,dr
-\int_t^{\tau_D}bu_p^p(X_r)\,dr\\&\quad +\int_t^{\tau_D}\,dA^\mu_r-\int_t^{\tau_D}\,d(M^p_r-M_r),\quad t\in [0,\tau_D],\, P_x\mbox{-a.s.}
\end{align*}
for some martingales $M^p, M$, and q.e. $x\in D$.
By It\^o's formula, (H1) and (\ref{eq.estgfu2}),
\begin{align}
\label{eq.ptg1}
\nonumber|u_p(x)-u(x)|^2&= 2a\mathbb E_x\int_0^{\tau_D}|u_p-u|^2(X_r)\,dr
-2\mathbb E_x\int_0^{\tau_D}(u_p-u)(X_r)bu_p^p(X_r)\,dr\\&\quad\nonumber
+2\mathbb E_x\int_0^{\tau_D}(u_p-u)(X_r)\,dA^\mu_r\\&\nonumber \le
2a\mathbb E_x\int_0^{\tau_D}|u_p-u|^2(X_r)\,dr+2 \|b\|_\infty
\mathbb E_x\int_0^{\tau_D}|u_p-u|(X_r)\,dr\\&\nonumber\quad
+2\mathbb E_x\int_0^{\tau_D}|u_p-u|(X_r)\,dA^\mu_r \\& \le
c(2a+2\|b\|_\infty)\|u_p-u\|_{L^q(D;m)}
+2\mathbb E_x\int_0^{\tau_D}|u_p-u|(X_r)\,dA^\mu_r
\end{align}
for any  $q\in (1,d/(d-2\alpha))$, and with $c$ depending only on $q,D,\alpha$ and $d$.
Set $\theta:= c_{k_0}\|\psi_{k_0}\|_\infty$. By  \eqref{eq5.1} and ultracontractivity of $(P^D_t)_{t\ge 0}$ (cf. \eqref{eq.iuc12u}), for $h>0$ we have
\begin{align}
\label{eq.ptg2}
\nonumber&\mathbb E_x\int_0^{\tau_D}|u_p-u|(X_r)\,dA^\mu_r\\
&\nonumber\quad=\mathbb E_x\int_h^{\infty}\mathbf{1}_{\{r<\tau_D\}}|u_p-u|(X_r)\,dA^\mu_r
+ \mathbb E_x\int_0^h\mathbf{1}_{\{r<\tau_D\}}|u_p-u|(X_r)\,dA^\mu_r\\
&\nonumber\quad=
\int_D\int_h^{\infty}|u_p-u|(y)p_D(t,x,y)\,dt\,\mu(dy)+\mathbb E_x\int_0^h
\mathbf{1}_{\{r<\tau_D\}} |u_p-u|(X_r)\,dA^\mu_r\\
& \quad\le \beta_h
\frac{e^{-h\lambda^D_1 }\|\varphi_1^D\|^2_\infty}{\lambda_1^D}
\int_D|u_p-u|(y)\,\mu(dy)+2\theta\mathbb E_x\int_0^h
\mathbf{1}_{\{r<\tau_D\}}\,dA^\mu_r.
\end{align}
Since  $\mu\in S_0(D)$ as  shown above, we have   
\begin{equation}
\label{eq.ptg3}
\int_D|u_p-u|(y)\,\mu(dy)\le c\EE_D(u_p-u,u_p-u).
\end{equation}
By Proposition \ref{prop2.1}(iii), for every $x\in D$,
\begin{align}
\label{eq.ptg4}
\nonumber \mathbb E_x\int_0^h \mathbf{1}_{\{r<\tau_D\}}\,dA^\mu_r &=a\mathbb E_x\int_0^h
\mathbf{1}_{\{r<\tau_D\}}u(X_r)\,dr+\mathbb E_x[u(X_h)\mathbf{1}_{\{h<\tau_D\}}]-u(x)\\&
\le a\theta h+\|P^D_h(u)-u\|_\infty.
\end{align}
Since $(P^D_t)$ is Fellerian, $\|P^D_h(u)-u\|_\infty\rightarrow
0$ as $h\searrow 0$. Consequently, putting together \eqref{eq.ptg1}--\eqref{eq.ptg4}, and  the already proven convergence
properties of $(u_p)$, 
we conclude that
$\|u_p-u\|_\infty\rightarrow 0$ as $p\rightarrow \infty$.
\end{proof}

\section{Parabolic equations: existence and probabilistic interpretation}

Let $m_1$ be the Lebesgue measure on $\BR^{d+1}$. Set $D_T=(0,T)\times D$.
Let $\langle\cdot,\cdot\rangle$ denote the  duality pairing between
$H^{\alpha}_0(D)$ and its dual space $H^{-\alpha}(D)$.  Set
\[
\WW=\{u\in L^2(\BR;
H^{\alpha}_0(D):\frac{d u}{d t}\in L^2(\BR;
H^{-\alpha}(D))\},
\]
\[
\WW(0,T)=\{u\in L^2(0,T; H^{\alpha}_0(D)):\frac{d u}{d t}\in L^2(0,T; H^{-\alpha}(D))\},
\]
and define a bilinear form $\BB^D$ by
\[
\mathcal{B}^D(u,v)=\left\{
\begin{array}{l}\int_\BR\langle-\frac{d u}{d t},
v\rangle\,dt +\int_\BR\EE_D(u,v)\, dt,
\quad u\in\WW,v\in L^2(\BR; H^{\alpha}_0(D)),\smallskip \\
\int_\BR\langle-\frac{d v}{d t},u\rangle\,dt
+\int_\BR\EE_D(u,v)\, dt,\quad u\in L^2(\BR; H^{\alpha}_0(D)),v\in\WW,
\end{array}
\right.
\]

Let $\mathfrak X^D= ((\mathscr X^D_t)_{t\ge0}, (P_{s,x})_{(s,x)\in \BR\times D},(\mathscr F_t)_{t\ge0})$ be a Hunt process associated with the form $\mathcal B^D$ (see \cite[Theorem 6.3.1]{Oshima}). In fact (see \cite[Theorem 6.3.1]{Oshima} again)
\[
\mathscr X^D_t= (\upsilon(t),X^D_{\upsilon(t)}),\quad t\ge 0,
\]
where $\upsilon(t)$ is the uniform motion to the right, i.e. $\upsilon(t)=\upsilon(0)+t$ and $\upsilon(0)=s$ $P_{s,x}$-a.s. Moreover,  $X^D$ is a c\`adl\`ag process such that  for any Borel subset $B$ of $D$,
\[
P_{s,x}(X^D_t\in B)=\int_B p_D(t-s,x,y)\,dy,\quad x\in D,\,s<t.
\]
It follows that  for fixed $s\ge 0$ process $t\mapsto X^D_{s+t}$ under measure  $P_{s,x}$, for $x\in D$,
agrees with  the process $\mathbb X^D$ introduced in Section \ref{sec2.2}.

As in \cite[Section 6.2]{Oshima}, we define a Choquet capacity naturally associated with the form $\BB^D$. We shal denote it by $\mbox{Cap}_1$. Then, as in the case of  the form
$\EE_D$, we define quasi-notions associated with $\mbox{\rm Cap}_1$ ($\mbox{Cap}_1$-q.e., $\BB^D$-quasi-continuity, $\BB^D$-smooth measures).
We denote by  $\MM_{0,b}(\BR\times D)$  the set of $\BB^D$-smooth bounded measures on $\BR\times D$, and for fixed $T>0$,
we denote by $\MM_0(D_T)$ the subset of $\MM_{0,b}(\BR\times D)$ consisting of measures $\mu$ such that $\mu((\BR\times D)\setminus D_T)=0$. By \cite[Proposition 4.1]{KR:JEE}, for every positive smooth measure $\mu$ on $D_T$ there exists a unique PNAF (positive natural additive functional)  $A^\mu$ of $\mathfrak X^D$ in the Revuz duality with $\mu$.

In the sequel, for a function $v$ on $D_T$, we let
\[
v^{(T)}(t,x):= v(T-t,x),\quad (t,x)\in D_T,
\]
and for a given measure $\mu\in\MM_{0,b}(D_T)$, we denote by $\mu^{(T)}$
the measure on $D_T$ given by
\[
\int_{D_T}\eta\,d\mu^{(T)}=\int_{D_T}\eta^{(T)}\,d\mu,\quad \eta\in C_b(D_T).
\]
Recall that, starting from Section \ref{sec3}, we assume in the paper that
conditions  (H1),(H2) (see Introduction) are satisfied.
In the sequel,  we frequently use, without special mention,  that $\WW(0,T)\subset C([0,T];L^2(D;m))$   (see e.g. \cite[Remarque 1.2, page 156]{Lions}).

\subsection{Probabilistic interpretation of solutions to linear equations}

Let $\varphi\in L^2(D;m)$ and $f\in L^2(D_T;m_1)$.
Consider  the following linear equation.
\begin{equation}
\label{eq1.3test2} \left\{
\begin{array}{l}\frac{d v}{d t}-\Delta^{\alpha} v= f,\quad\mbox{in}\,\,\, D\times (0,\infty),\medskip\\
\,v=0,\quad\mbox{in}\,\,\, (\BR^d\setminus D)\times (0,\infty),
\medskip \\
\,v(0,\cdot)=\varphi,\quad\mbox{in}\,\,\, D.
\end{array}
\right.
\end{equation}

\begin{definition}
We say that a bounded function  $v\in\WW(0,T)$ is a {\em weak solution} to (\ref{eq1.3test2}) on $[0,T]$ if $v(0,\cdot)=\varphi$ and
for every $\eta\in L^2(0,T;H^{\alpha}_0(D))$,
\begin{equation}
\label{eq6.0}
\int_0^s\Big\langle\frac{d v}{dt},\eta\Big\rangle\,dt+\int_0^s\EE_D(v,\eta)\,dt= \int_0^s(f,\eta)\,dt,\quad s\in (0,T).
\end{equation}
\end{definition}

\begin{proposition}
\label{prop.ppt1}
Let $\varphi\in L^2(D;m)$ and $f\in L^2(D_T;m_1)$.
\begin{enumerate}[\rm(i)]
\item  $v$ is a  weak solution to \eqref{eq1.3test2} if and only  if
\begin{equation}
\label{eq6.1ppt}
v^{(T)}(s,x)=\mathbb E_{s,x}\varphi(X^D_T)+\mathbb E_{s,x}\int_0^{T-s} f^{(T)}(\mathscr X^D_r)\,dr,\quad\mbox{a.e.}\,\,\, (s,x)\in D_T.
\end{equation}

\item There exists a unique weak solution $v$ to \eqref{eq1.3test2} on $[0,T]$. 
Moreover,  $v$ is a strong solution to \eqref{eq1.3test2}, i.e. $v$ is  absolutely continuous on $[0,T]$,  $\frac{dv}{dt}\in L^1(0,T;L^2(D;m))$, $v(t)\in D((\Delta^{\alpha})_{|D})$ a.e. $t\in [0,T]$, and 
\begin{equation}
\label{eq6.1pptpizz}
\frac{dv}{dt}(t)-(\Delta^{\alpha})_{|D}v(t)=f(t),\quad\mbox{a.e.}\,\,\, t\in [0,T].
\end{equation}

\item Let $\tilde v^{(T)}(s,x)$ be equal to the right-hand side of \mbox{\rm(\ref{eq6.1ppt})}
if it is finite, and $\tilde v^{(T)}(s,x)=0$ otherwise. Then there exists a c\`adl\`ag process $M$ with $M_0=0$ such that $M$ is an $(\mathscr F_t)_{t\ge0}$-martingale under the measure $P_{s,x}$ and
\[
\tilde v^{(T)}(\mathscr X^D_t)=\varphi(X^D_T)+\int_t^{T-s} f^{(T)}(\mathscr X^D_r)\,dr -\int_t^{T-s}\,dM_r,\quad t\in [0,T-s],\quad P_{s,x}\mbox{-}a.s.,
\]
for every $(s,x)\in D_T$ such that $\mathbb E_{s,x}|\varphi(X^D_T)|+\mathbb E_{s,x}\int_0^{T-s} |f^{(T)}(\mathscr X^D_r)|\,dr<\infty$.
\end{enumerate}
\end{proposition}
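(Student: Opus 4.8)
The plan is to prove the three parts of Proposition \ref{prop.ppt1} in the order (ii), (i), (iii), using the probabilistic machinery already assembled in Section \ref{sec2} together with the Hunt process $\mathfrak X^D$ attached to the space-time form $\BB^D$. For part (ii), existence and uniqueness of a weak solution in $\WW(0,T)$ is the standard Lions theory for parabolic equations with a coercive, bounded bilinear form on the Gelfand triple $H^\alpha_0(D)\hookrightarrow L^2(D;m)\hookrightarrow H^{-\alpha}(D)$ (coercivity of $\EE_D$ follows from \eqref{eq2.1.equiv1}), and the inclusion $\WW(0,T)\subset C([0,T];L^2(D;m))$ already quoted makes the initial condition meaningful; boundedness of $v$ follows from boundedness of the data via the maximum principle (or, more conveniently, from the Feynman-Kac representation proved next combined with \eqref{eq.estgfu} and \eqref{eq.tg.2}). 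To upgrade the weak solution to a strong solution one uses that $f\in L^2(D_T;m_1)$ and $\varphi\in L^2(D;m)$, invoking the regularizing effect of the analytic semigroup $(T^D_t)$ generated by the self-adjoint operator $(\Delta^\alpha)_{|D}$: write $v(t)=T^D_t\varphi+\int_0^t T^D_{t-r}f(r)\,dr$, and use the standard smoothing estimate $\|(\Delta^\alpha)_{|D}T^D_s\|\le C/s$ to deduce $v(t)\in D((\Delta^\alpha)_{|D})$ for a.e.\ $t$ together with $\tfrac{dv}{dt}\in L^1(0,T;L^2(D;m))$ and \eqref{eq6.1pptpizz}.

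For part (i), the heart of the matter is to identify the weak solution with the Feynman-Kac formula \eqref{eq6.1ppt}. I would first treat the time-reversed picture: $v^{(T)}$ is the weak solution of the backward problem, and by the description of $\mathfrak X^D$ from \cite[Theorem 6.3.1]{Oshima} --- namely $\mathscr X^D_t=(\upsilon(t),X^D_{\upsilon(t)})$ with $X^D$ having transition density $p_D$ --- the right-hand side of \eqref{eq6.1ppt} equals
\[
P^D_{T-s}\varphi(x)+\int_0^{T-s}P^D_r f^{(T)}(T-s-r,\cdot)(x)\,dr
\]
after conditioning on the deterministic clock $\upsilon$ and using \eqref{eq.ppt1}. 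Reversing time back, this is exactly the Duhamel/semigroup representation $v(t)=T^D_t\varphi+\int_0^t T^D_{t-r}f(r)\,dr$ established in (ii). Conversely, any weak solution coincides with this semigroup representation by uniqueness. The measurability and integrability issues (that the stochastic expression is finite a.e.\ $(s,x)$) are handled by \eqref{eq.estgfu}, \eqref{eq.tg.2} and Fubini, exactly as in the elliptic case.

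For part (iii), the martingale representation, I would proceed as in the proof of Lemma \ref{lm2.4.1} (which is \cite[Theorem 4.7]{KR:JFA}), but now for the space-time process $\mathfrak X^D$. Define $\tilde v^{(T)}$ as the pointwise (everywhere-defined) version given by the right-hand side of \eqref{eq6.1ppt}, and for $(s,x)$ in the full-measure set where the data are $P_{s,x}$-integrable, set
\[
M_t:=\mathbb E_{s,x}\Big[\varphi(X^D_T)+\int_0^{T-s}f^{(T)}(\mathscr X^D_r)\,dr\,\Big|\,\mathscr F_t\Big]-\mathbb E_{s,x}\Big[\varphi(X^D_T)+\int_0^{T-s}f^{(T)}(\mathscr X^D_r)\,dr\Big].
\]
This is by construction a uniformly integrable $(\mathscr F_t)$-martingale with $M_0=0$. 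Using the Markov property of $\mathfrak X^D$ at time $t$ together with the already-proven representation \eqref{eq6.1ppt} evaluated at the shifted starting point $\mathscr X^D_t$, one identifies
\[
\mathbb E_{s,x}\Big[\varphi(X^D_T)+\int_0^{T-s}f^{(T)}(\mathscr X^D_r)\,dr\,\Big|\,\mathscr F_t\Big]=\int_0^t f^{(T)}(\mathscr X^D_r)\,dr+\tilde v^{(T)}(\mathscr X^D_t),
\]
which rearranges precisely to the claimed identity $\tilde v^{(T)}(\mathscr X^D_t)=\varphi(X^D_T)+\int_t^{T-s}f^{(T)}(\mathscr X^D_r)\,dr-\int_t^{T-s}dM_r$ on $[0,T-s]$. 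The c\`adl\`ag modification of $M$ exists because $\mathfrak X^D$ is a Hunt process (its natural filtration satisfies the usual conditions).

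The main obstacle I anticipate is part (iii): one must be careful that \eqref{eq6.1ppt} holds not merely a.e.\ but at the specific points $\mathscr X^D_t(\omega)$ visited by the process, so that the Markov-property identification of the conditional expectation is legitimate. This is the standard ``refinement'' step --- one passes from the a.e.\ statement to a q.e./excessive-version statement using that $\tilde v^{(T)}$ is finely continuous (being a difference of two potentials/excessive functions built from $p_D$ and the smooth data), exactly the device used in \cite[Theorem 4.7]{KR:JFA} and in the proof of Theorem \ref{th3.1} above via \eqref{eq3.cutr5}. A secondary technical point is verifying that the time-reversal bookkeeping between $\mu$, $\mu^{(T)}$, $v$, $v^{(T)}$ and the forward/backward forms is consistent; this is routine but must be stated once carefully so it can be reused in later sections.
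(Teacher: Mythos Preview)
Your proposal is correct and follows essentially the same route as the paper. The paper's own proof consists of three sentences: it cites \cite[Theorem 3.7, Theorem 5.8]{K:JFA} for (i) and (iii), then observes that \eqref{eq6.1ppt} is precisely the mild-solution formula and invokes \cite[Theorem 8.2.1]{Vrabie} to upgrade to a strong solution for (ii). Your sketch unpacks exactly what those cited results do: Lions theory plus the Duhamel representation and analytic-semigroup smoothing for (ii), identification of the Feynman--Kac expression with the semigroup formula for (i), and the conditional-expectation/Markov-property construction of the martingale for (iii). The ``refinement'' issue you flag in (iii) is handled automatically here because $\tilde v^{(T)}$ is defined pointwise as the expectation itself, so the Markov property applies directly without needing a fine-continuity argument; your resolution is therefore slightly more elaborate than necessary but not wrong. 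One small slip: you justify boundedness of $v$ from ``boundedness of the data'', but the stated hypothesis is only $\varphi\in L^2$, $f\in L^2$; the paper's definition of weak solution builds boundedness in, and in all subsequent applications the data are indeed bounded, so this is a harmless inconsistency you inherit from the paper rather than a gap in your argument.
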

\begin{proof}
(i) and (iii) follow from \cite[Theorem 3.7, Theorem 5.8]{K:JFA}. Observe that \eqref{eq6.1ppt} means that
$v$ is a {\em mild solution} to \eqref{eq1.3test2}. Therefore, by \cite[Theorem 8.2.1]{Vrabie}, $v$ is a {\em strong solution}
to \eqref{eq1.3test2}.
\end{proof}

\begin{remark}
For brevity (and in light of Proposition \ref{prop.ppt1}(ii)), we frequently write that $v$ is a  weak (strong) solution to
\[
\frac{dv}{dt}-(\Delta^{\alpha})_{|D}v=f,\quad v(0)=\varphi
\]
instead of writing that it is a weak (strong) solution to \eqref{eq1.3test2}.
\end{remark}

\begin{remark}
\label{rem.bsde}
The displayed formula in Proposition \ref{prop.ppt1}(iii) says that the pair of processes $(\tilde v^{(T)}(\mathscr X^D), M)$
is a solution of the so called Backward Stochastic Differential Equation (BSDE) with terminal condition $\varphi(X^D_T)$,
and right-hand side $ f^{(T)}(\mathscr X^D)$ (see \cite{KR:JFA}).

\end{remark}

\subsection{Existence for parabolic logistic equations}

\begin{theorem}
\label{th6.impar5}
For every $p>0$ there exists a unique bounded weak solution $v_p$ to \mbox{\rm(\ref{eq1.3})}.
Moreover,   
$\frac{d v_p}{d t}, (\Delta^{\alpha})_{|D}v_p\in C((0,T];L^2(D;m))$.
\end{theorem}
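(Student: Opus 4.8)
The plan is to construct $v_p$ by a fixed-point argument for the probabilistic (mild) formulation of Proposition~\ref{prop.ppt1}, working with a truncated, globally Lipschitz nonlinearity, then to identify the fixed point with a solution of the genuine equation by means of a priori bounds coming from comparison with explicit sub- and supersolutions, and finally to upgrade the time regularity by an analytic-semigroup bootstrap.

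Fix $T>0$, put $N:=\|\varphi\|_\infty e^{aT}$ and set $\hat f(x,y):=ay-b(x)\big((0\vee y)\wedge N\big)^p$, which is bounded and Lipschitz in $y$ uniformly in $x$. On the set of bounded Borel functions on $D_T$ define $\Phi$ by letting $(\Phi v)^{(T)}(s,x)$ be the right-hand side of \eqref{eq6.1ppt} with $f$ there replaced by $\hat f(\cdot,v)$; this is a genuine self-map, since $\hat f(\cdot,v)$ is bounded Borel and \eqref{eq6.1ppt} then produces a bounded Borel function. Equipped with the weighted norm $\|v\|:=\sup_{(s,x)\in D_T}e^{-\kappa(T-s)}|v(s,x)|$, the map $\Phi$ is a contraction for $\kappa$ large (because $\mathbb E_{s,x}\int_0^{T-s}e^{\kappa(T-s-r)}\,dr\le\kappa^{-1}e^{\kappa(T-s)}$), hence has a unique fixed point $v_p$; since $\hat f(\cdot,v_p)\in L^\infty(D_T)\subset L^2(D_T;m_1)$, Proposition~\ref{prop.ppt1}(i)--(ii) shows that $v_p$ is the unique bounded weak, and in fact strong, solution of $\frac{dv}{dt}-(\Delta^{\alpha})_{|D}v=\hat f(\cdot,v)$, $v(0)=\varphi$.

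Next I would show $0\le v_p\le\|\varphi\|_\infty e^{at}$ on $D_T$. The function $\bar v(t,x):=\|\varphi\|_\infty e^{at}\mathbf{1}_D(x)$ satisfies $\frac{d\bar v}{dt}-(\Delta^{\alpha})_{|D}\bar v-\hat f(\cdot,\bar v)=b\bar v^p-\|\varphi\|_\infty e^{at}(\Delta^{\alpha})_{|D}\mathbf{1}_D\ge 0$ (using $-(\Delta^{\alpha})_{|D}\mathbf{1}_D\ge 0$, which follows from the formula for $\EE_D(u,u)$ in Section~\ref{sec2.1}, and $0\le\bar v\le N$ on $[0,T]$) with $\bar v(0)\ge\varphi$, so $\bar v$ is a supersolution; $0$ is a subsolution because $\hat f(x,0)=0\le\varphi$. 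Representing $v_p$, $0$ and $\bar v$ through Proposition~\ref{prop.ppt1}(iii) and applying the Tanaka--Meyer formula to $(v_p-\bar v)^+$ and to $(-v_p)^+$ along the space--time process $\mathscr X^D$, exactly as in the proof of Proposition~\ref{prop3.3}, together with the inequality $\hat f(x,y_1)-\hat f(x,y_2)\le a(y_1-y_2)$ for $y_1\ge y_2$ and Gronwall's lemma, yields $0\le v_p\le\bar v\le N$. On this range $\hat f(\cdot,v_p)=av_p-bv_p^p$, so $v_p$ is a bounded weak solution of \eqref{eq1.3} on $[0,T]$; since solutions on different intervals agree by uniqueness, letting $T\to\infty$ produces a solution on $D\times(0,\infty)$. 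Uniqueness of a bounded weak solution on $[0,T]$ follows from the fixed-point uniqueness above (two bounded solutions lie in $[0,M]$ for some $M$ and hence solve the problem truncated at level $M$), or directly: for the difference $w$ of two solutions, $\tfrac12\tfrac{d}{dt}\|w\|_{L^2(D;m)}^2+\EE_D(w,w)\le L\|w\|_{L^2(D;m)}^2$ with $L$ a local Lipschitz constant, and $w(0)=0$, so $w\equiv0$.

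Finally, the regularity $\tfrac{dv_p}{dt},(\Delta^{\alpha})_{|D}v_p\in C((0,T];L^2(D;m))$ --- the least routine point --- would be obtained by a bootstrap. Since $v_p$ is bounded on $D_T$, the forcing $g:=av_p-bv_p^p$ lies in $L^\infty(D_T)\subset L^2(D_T;m_1)$, so by Proposition~\ref{prop.ppt1}(ii), $v_p\in\WW(0,T)$; in particular $v_p\in L^2(0,T;H^{\alpha}_0(D))$, whence $v_p(\delta)\in H^{\alpha}_0(D)=D\big((-(\Delta^{\alpha})_{|D})^{1/2}\big)$ for a.e. $\delta\in(0,T)$. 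For such a $\delta$, classical maximal $L^2$-regularity for the nonnegative self-adjoint operator $-(\Delta^{\alpha})_{|D}$ on $[\delta,T]$ (see e.g. \cite{Lions}) gives $v_p\in L^2(\delta,T;D((\Delta^{\alpha})_{|D}))\cap H^1(\delta,T;L^2(D;m))\subset C^{1/2}([\delta,T];L^2(D;m))$; since $\|g(t)-g(s)\|_{L^2(D;m)}\le(a+p\|b\|_\infty\|v_p\|_\infty^{p-1})\|v_p(t)-v_p(s)\|_{L^2(D;m)}$, the forcing $g$ is then Hölder continuous on $[\delta,T]$ with values in $L^2(D;m)$, and the standard theory of analytic semigroups with Hölder continuous forcing (see e.g. \cite{Vrabie}) yields $v_p\in C^1((\delta,T];L^2(D;m))\cap C((\delta,T];D((\Delta^{\alpha})_{|D}))$ with $\tfrac{dv_p}{dt}-(\Delta^{\alpha})_{|D}v_p=g$ pointwise in $t$. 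As $\delta\in(0,T)$ is arbitrary, this gives the claim on $(0,T]$. The main obstacle is exactly this last step: because $b$ is only measurable, the forcing $g$ is a priori merely bounded, so one must first manufacture the half-order time regularity of $v_p$ out of the energy and maximal-regularity estimates before analytic smoothing can be invoked.
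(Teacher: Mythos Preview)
Your argument is essentially correct and arrives at the same conclusions, but by a different route than the paper.

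For existence and uniqueness the paper simply invokes \cite[Theorems 3.7, 5.4, 5.8 and Proposition 3.6]{K:JFA}, whereas you give a self-contained Banach fixed-point construction for the truncated problem followed by a comparison argument to remove the truncation. This is more transparent and decouples the result from the machinery of \cite{K:JFA}. One technical point deserves care: you justify the supersolution $\bar v(t,\cdot)=\|\varphi\|_\infty e^{at}\mathbf 1_D$ via ``$-(\Delta^{\alpha})_{|D}\mathbf 1_D\ge 0$, which follows from the formula for $\EE_D(u,u)$''. That computation requires $\mathbf 1_D\in H^{\alpha}(\BR^d)$, which fails for $\alpha\ge\tfrac12$ (indeed $\int_D c(x)\,dx=\infty$ then), so as written the step is only valid for $\alpha<\tfrac12$. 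The comparison itself is true for all $\alpha\in(0,1)$ and is more safely obtained without ever applying the operator to $\mathbf 1_D$: along $\mathscr X^D$ the process $r\mapsto \|\varphi\|_\infty e^{a(T-s-r)}$ is deterministic and $C^1$, so its semimartingale decomposition is trivial, and the Tanaka--Meyer/Gronwall argument you sketch goes through using only $\hat f(x,y_1)-\hat f(x,y_2)\le a(y_1-y_2)$ for $y_1\ge y_2$ and $\varphi\le\|\varphi\|_\infty$. (Equivalently, use the sub-Markov property $P^D_t\mathbf 1_D\le\mathbf 1_D$ in the mild formulation; this is how the paper's bound \eqref{eq6.1} is obtained, by citing \cite[Corollary 5.9]{K:JFA}.)

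For the regularity, both proofs follow the same scheme: first show $t\mapsto v_p(t)$ is $\tfrac12$-H\"older into $L^2(D;m)$ on $[\varepsilon,T]$, deduce the same for the forcing $g=av_p-bv_p^p$ (via the local Lipschitz bound you wrote), and then invoke the classical analytic-semigroup result for H\"older forcings to get $\frac{dv_p}{dt},(\Delta^{\alpha})_{|D}v_p\in C((0,T];L^2)$. The difference is only in how the H\"older step is reached. The paper uses analyticity of $(P^D_t)$ from \cite{LP} and applies \cite[Section 4.3, Theorem 3.1]{Pazy} directly to the mild solution with bounded forcing, which gives H\"older continuity on $[\varepsilon,T]$ with no hypothesis on the initial datum. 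You instead pass through maximal $L^2$-regularity on $[\delta,T]$, which needs $v_p(\delta)\in H^{\alpha}_0(D)=D((-(\Delta^{\alpha})_{|D})^{1/2})$; you obtain this for a.e.\ $\delta$ from $v_p\in L^2(0,T;H^{\alpha}_0(D))$, and then use $H^1(\delta,T;L^2)\hookrightarrow C^{1/2}([\delta,T];L^2)$. Both are valid; the paper's route is a bit shorter since it avoids selecting a good $\delta$.
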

\begin{proof}
The existence of a bounded weak solution to \eqref{eq1.3} follows from  \cite[Theorem 3.7,Theorem 5.4,Theorem 5.8]{K:JFA}.
The uniqueness part is a standard result (see e.g. \cite[Proposition 3.6]{K:JFA}). 
By \cite{LP}, the semigroup $(P^D_t)$ is analytic on $L^2(D;m)$.
Therefore, by \cite[Theorem 3.1, Section 4.3]{Pazy}, $[\varepsilon,T]\ni t\mapsto v_p(t)\in L^2(D;m)$ is $\frac12-$H\"older
continuous for any $\varepsilon>0$. 
Now, the asserted  regularity follows from \cite[Theorem 3.5, Section 4.3]{Pazy}.
\end{proof}

\begin{remark}
\label{rem.th6.impar5}
Let  $v_p\in\WW(0,T)$ be a bounded function.
In light of the above theorem,   (\ref{eq6.0}) is equivalent to each of the following statements:  (a) for any  $t\in (0,T)$ and  $\eta\in H^{\alpha}_0(D)$,
\begin{equation}
\label{eq6.0a}
\Big(\frac{dv_p}{dt}(t),\eta\Big)+\EE_D(v_p(t),\eta)= a(v_p(t),\eta)-(bv_p^p(t),\eta).
\end{equation}
(b) $\frac{d v_p}{dt}\in L^2(D_T)$,   $v_p(t)\in D((\Delta^{\alpha})_{|D})$ a.e. $t\in (0,T)$, $v_p(0)=\varphi$, and 
\[
\frac{d v_p}{dt}(t)-(\Delta^{\alpha})_{|D}v_p(t)=av_p(t)-bv_p^p(t),\quad\mbox{a.e.}\,\,\, t\in (0,T).
\]
\end{remark}

\section{Obstacle problem and asymptotics as $p\rightarrow \infty$ for parabolic equations}

Let $\WW_T(0,T)=\{u\in \WW(0,T):u(T)=0\}$. In this section, we shall
prove asymptotics, with respect to the increasing power of the absorption term,  for parabolic logistic equations. To this end, as in the elliptic case, we  begin with providing some  equivalent formulations of the parabolic obstacle problem (\ref{eq1.4}). We shall also show some regularity results for weak solutions to (\ref{eq1.4}).

Recall that we assumed that (H1), (H2) are satisfied and $D$ is a bounded Lipschitz domain.

\subsection{Obstacle problem}

\begin{definition}
We say that  $v\in C([0,T]; L^2(D;m))\cap L^2(0,T; H^{\alpha}_0(D))$ is a {\em weak solution} to (\ref{eq1.4}) on $[0,T]$ if
\begin{enumerate}
\item[{\rm{(i)}}] $v\le \mathbb{I}_{D\setminus \overline D_0}$ a.e. and $v(0,\cdot)=\varphi$  a.e.,
\item[{\rm{(ii)}}] For every $\eta\in \WW(0,T)$ such that $\eta\le \mathbb{I}_{D\setminus \overline D_0 }$ a.e. we have
\begin{align*}
\int_0^s\Big\langle \frac{d\eta}{dt},\eta & -v\Big\rangle\,dt+\int_0^s\EE_D(v,\eta-v)\,dt\ge \int_0^s(av,\eta-v)\,dt\\&\quad
+\frac12\|\eta(s)-v(s)\|^2_{L^2(D;m)}-
\frac12\|\eta(0)-\varphi\|^2_{L^2(D;m)},\quad s\in [0,T].
\end{align*}
\end{enumerate}
\end{definition}

\begin{remark}
\label{rem.eqovsop1}
It is an elementary check that if additionally to regularity of $v$ required in the definition of weak solution to \eqref{eq1.4},
we know that $v\in\WW(0,T)$, then (ii) is equivalent to the following condition:  for every $\eta\in L^2(0,T;H^{\alpha}_0(D))$ such that $\eta\le \mathbb{I}_{D\setminus \overline D_0 }$ a.e. we have 
\begin{align*}
\int_0^T\Big\langle \frac{dv}{dt},\eta  -v\Big\rangle\,dt+\int_0^T\EE_D(v,\eta-v)\,dt\ge \int_0^T(av,\eta-v)\,dt,\quad v(T)=\varphi.
\end{align*}
Furthermore, if we know that $\frac{dv}{dt}\in L^2(D_T;m_1)$, then the above condition is equivalent to the following one:
for every $\eta\in H^{\alpha}_0(D)$ such that $\eta\le \mathbb{I}_{D\setminus \overline D_0 }$ $m$-a.e. we have 
\begin{align*}
\Big(\frac{dv}{dt}(t),\eta  -v(t)\Big)+\EE_D(v(t),\eta-v(t))\ge (av(t),\eta-v(t))\,dt,\,\,\, \mbox{a.e.}\,\, t\in [0,T],\,\,v(T)=\varphi.
\end{align*}

\end{remark}
Before we proceed to the next result, we recall some auxiliary  notions. 
A function $u:D_T\to \BR$ is called {\em quasi-c\`adl\`ag }(see \cite{K:JEE})
if for q.e. $(s,x)\in D_T$ process $u(\mathscr X^D)$ is c\`adl\`ag on $[0,T-s]$ under measure $P_{s,x}$.
In \cite{K:JEE} (see definition on page 704 in \cite{K:JEE} and comments following it), we introduced a notion of a {\em probabilistic solution} to \eqref{eq1.4} according to which, $u:D_T\to\BR$
is a solution to \eqref{eq1.4} if $u\le \mathbb I_{D\setminus\overline D_0}$ a.e., there exists a positive smooth measure $\nu$
on $D_T$ such that
\begin{equation}
\label{eq.pp1}
u(s,x)=\mathbb E_{s,x}\varphi(X^D_T)+a\mathbb E_{s,x}\int_0^{T-s}u^{(T)}(\mathscr X^D_r)\,dr-\mathbb E_{s,x}\int_0^{T-s}\,dA^{\nu^{(T)}}_r,
\end{equation} 
and for any quasi-c\`adl\`ag function $\eta$ such that $u\le \eta\le \mathbb I_{D\setminus\overline D_0}$ a.e., we have
\begin{equation}
\label{eq.pp2}
\int_{D_T}(\hat \eta-\hat u)\,d\nu=0.
\end{equation} 
Here $\hat u,\hat \eta$ are {\em precise versions} of $u,\eta$, respectively. 
By the very definition of the precise version (see  definition on page 692 in \cite{K:JEE}; see also comments preceding Lemma 5.1 in \cite{Oshima2}), if $u,\eta$ are quasi-continuous, then
$\hat u=u$ and $\hat \eta=\eta$. In this case \eqref{eq.pp2} may be  replaced by the following condition:
for any quasi-continuous function $\eta$ such that $u\le \eta\le \mathbb I_{D\setminus\overline D_0}$ a.e., we have
\begin{equation}
\label{eq.pp3}
\int_{D_T}(\eta-u)\,d\nu=0.
\end{equation} 
By \cite[Theorem 5.3]{K:JEE} (see also the comments after definition on page 704 in \cite{K:JEE}),
formulation (7.1),(7.3) guarantees  the uniqueness of a quasi-continuous probabilistic solution to \eqref{eq1.4}.

\begin{proposition}
\label{prop2.1v}
(1) Assume that $v$ is  quasi-continuous and $v\in L^2(0,T;H^{\alpha}_0(D))$. Then the following statements (i)--(iii) are equivalent.
\begin{enumerate}
\item[\rm(i)] $v$ is a weak solution to \mbox{\rm(\ref{eq1.4})} on $[0,T]$.

\item[\rm (ii)] There exists a positive  $\nu\in \MM_{0,b}(D_T)$ such that
\begin{enumerate}
\item[\rm(a)] $\int_0^T\big\langle \frac{d\eta}{d t},v\big\rangle\,dt+\int_0^T \EE_D(v,\eta)\,dt =(\varphi,\eta(0))+a\int_0^T (v,\eta)\,dt -\int_{D_T}\tilde \eta\,d\nu$  for every bounded $\eta\in \WW_T(0,T)$,
\item[\rm (b)] $\int_{D_T}(v-\eta)\,d\nu=0$ for every quasi-continuous  function $\eta$ on $D_T$ such that $v\le\eta\le \mathbb I_{D\setminus\overline D_0}$ a.e.
\end{enumerate}

\item[\rm (iii)] There exists a c\`adl\`ag process $M$ with $M_0=0$  and a positive measure $\nu\in \MM_{0,b}(D_T)$ such that
\begin{enumerate}
\item[\rm(a)] $M$ is a uniformly integrable $(\mathscr F_t)_{t\ge 0}$-martingale under the measure $P_{s,x}$ for q.e. $(s,x)\in D_T$, and
\begin{align*}
v^{(T)}(\mathscr X^D_t)&=\varphi(X^D_T)+a\int_t^{T-s}v^{(T)}(\mathscr X^D_r)\,dr-\int_t^{T-s}\,dA^{\nu^{(T)}}_r\\&\quad
-\int_t^{T-s}\,dM_r,\quad t\le T-s,\quad  P_{s,x}\mbox{-}a.s.,\quad\mbox{q.e.}\,\,\, (s,x)\in D_T.
\end{align*}

\item[\rm(b)] For every quasi-continuous function $\eta$ on $D_T$ such that $v^{(T)}\le\eta\le \mathbb I_{D\setminus\overline D_0}$ a.e.,
\[
\int_0^{T-s}(\eta-v^{(T)})(\mathscr X_r)\,dA^{\nu^{(T)}}_r=0,\quad P_{s,x}\mbox{-a.s.}\quad\mbox{q.e.}\,\,\, (s,x)\in D_T.
\]
\end{enumerate}
\end{enumerate}

(2) Let $v_n\in\WW(0,T)$ be a quasi-continuous version of a weak solution to 
\begin{equation}
\label{eq7.wcom1}
\frac{dv_n}{dt}-(\Delta^{\alpha})_{|D} v_n=av_n-n(v_n-\mathbb{I}_{D\setminus\overline D_0})^+,\quad v_n(0)=\varphi.
\end{equation}
Then, defining  $\bar v:= \liminf_{n\rightarrow \infty} v_n$, we get that $\bar v^{(T)}$  satisfies (iii)(a).
Moreover, if $\bar v$ is quasi-continuous, then $\bar v^{(T)}$ satisfies (iii)(b).
\end{proposition}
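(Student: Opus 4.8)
The plan is to obtain part~(1) as the parabolic analogue of Proposition~\ref{prop2.1}, and part~(2) by a penalization argument carried through the Feynman--Kac/BSDE representation of Proposition~\ref{prop.ppt1}. For the equivalence (ii)$\Leftrightarrow$(iii) in part~(1): condition (iii)(a) is exactly the BSDE form of the linear problem $\frac{dv}{dt}-(\Delta^{\alpha})_{|D}v=av-\nu$, $v(0)=\varphi$, with the bounded $\BB^D$-smooth datum $\nu$, so by Proposition~\ref{prop.ppt1}(i),(iii) and the theory of linear equations with smooth measure data (\cite{K:JFA,KR:JEE}, cf.\ Remark~\ref{rem.bsde}) it is equivalent to the weak form (ii)(a). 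To match the complementarity conditions I would use the Revuz duality between $\nu^{(T)}$ and $A^{\nu^{(T)}}$: for quasi-continuous $\eta$ with $v\le\eta\le\mathbb I_{D\setminus\overline D_0}$ a.e.\ the integral $\int_0^{T-s}(\eta-v^{(T)})(\mathscr X^D_r)\,dA^{\nu^{(T)}}_r$ has nonnegative integrand and its mean over $D_T$ equals $\int_{D_T}(\eta-v)\,d\nu$; hence (ii)(b) (vanishing in the mean) and (iii)(b) (vanishing $P_{s,x}$-a.s.\ q.e.) are equivalent.

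For (i)$\Leftrightarrow$(ii) in part~(1) I would mirror the elliptic proof. Starting from the variational inequality in the definition of a weak solution, I test with $\eta$ of the form ``a time-mollification of $v$ minus a nonnegative $\xi\in C_c^\infty(D_T)$'' (such $\eta\in\WW(0,T)$ and still satisfies $\eta\le\mathbb I_{D\setminus\overline D_0}$), send the mollification parameter to $0$, and deduce that the distribution $\nu:=-\frac{dv}{dt}+(\Delta^{\alpha})_{|D}v+av$ on $D_T$ is nonnegative; by the Riesz representation theorem it is a positive Radon measure, and the energy information contained in (i) shows it has finite mass and is $\BB^D$-smooth, so $\nu\in\MM_{0,b}(D_T)$ and (ii)(a) holds. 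Testing with quasi-continuous functions squeezed between $v$ and $\mathbb I_{D\setminus\overline D_0}$ gives $\int_{D_T}(v-\eta)\,d\nu\ge0$, which with $v\le\eta$ and $\nu\ge0$ is (ii)(b). The converse (ii)$\Rightarrow$(i) is routine, and can also be read off the reformulation in Remark~\ref{rem.eqovsop1}.

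For part~(2), I first set up the approximating family. For fixed $n$ the nonlinearity $r\mapsto ar-n(r-\mathbb I_{D\setminus\overline D_0}(x))^+$ is globally Lipschitz in $r$, so \eqref{eq7.wcom1} has a unique bounded weak solution $v_n$ (cf.\ Theorem~\ref{th6.impar5}, \cite{K:JFA}); this nonlinearity being nonincreasing in $n$, the comparison principle gives $v_n\ge v_{n+1}\ge0$, so $\bar v:=\liminf_n v_n=\lim_n v_n$ exists and $0\le\bar v\le v_n\le e^{aT}\|\varphi\|_\infty$ on $D_T$. Testing \eqref{eq7.wcom1} with $v_n$ and applying Gronwall's inequality yields $\sup_n\big(\int_0^T\EE_D(v_n,v_n)\,dt+n\int_{D_T}(v_n-\mathbb I_{D\setminus\overline D_0})^+v_n\,dm_1\big)<\infty$, and since $v_n>1$ on $\{v_n>\mathbb I_{D\setminus\overline D_0}\}$ this forces the penalization measures $\nu_n:=n(v_n-\mathbb I_{D\setminus\overline D_0})^+\cdot m_1$ to have uniformly bounded total mass (and, from the equation, bounded norm in $H^{-\alpha}$). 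Hence, along a subsequence, $v_n\rightharpoonup\bar v$ in $L^2(0,T;H^{\alpha}_0(D))$, $v_n\to\bar v$ a.e.\ (and in $L^2(D_T)$ by dominated convergence), $\bar v\le\mathbb I_{D\setminus\overline D_0}$ a.e., and $\nu_n\to\nu$ for some positive $\nu\in\MM_{0,b}(D_T)$.

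Finally I pass to the limit in the probabilistic representation. Each $v_n$ is a strong solution of a linear equation with datum $av_n-n(v_n-\mathbb I_{D\setminus\overline D_0})^+\in L^2(D_T)$, so Proposition~\ref{prop.ppt1}(iii) gives $v_n^{(T)}(\mathscr X^D_t)=\varphi(X^D_T)+a\int_t^{T-s}v_n^{(T)}(\mathscr X^D_r)\,dr-\int_t^{T-s}dA^{\nu_n^{(T)}}_r-\int_t^{T-s}dM^n_r$. The core step is the stability of this representation: using $v_n\searrow\bar v$, the uniform bounds, and $\nu_n\to\nu$, I would invoke the monotone-convergence/stability theory for BSDEs and for additive functionals of converging smooth measures (\cite{KR:JFA,KR:JEE,K:JEE}) to obtain $M^n\to M$ and $\int_0^{\cdot}dA^{\nu_n^{(T)}}_r\to\int_0^{\cdot}dA^{\nu^{(T)}}_r$ uniformly on $[0,T-s]$ (in $P_{s,x}$-probability, q.e.), hence $v_n^{(T)}(\mathscr X^D)\to\bar v^{(T)}(\mathscr X^D)$ uniformly on compacts; letting $n\to\infty$ gives (iii)(a) for $\bar v^{(T)}$. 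If moreover $\bar v$ is quasi-continuous, then for quasi-continuous $\eta$ with $\bar v\le\eta\le\mathbb I_{D\setminus\overline D_0}$ a.e.\ I would write $\int_{D_T}(\eta-\bar v)\,d\nu=\lim_n\int_{D_T}(\eta-v_n)\,d\nu_n+\lim_n\int_{D_T}(v_n-\bar v)\,d\nu_n$, note the first limit is $\le0$ because $v_n>\mathbb I_{D\setminus\overline D_0}\ge\eta$ on $\mathrm{supp}\,\nu_n$, and the second is $0$ by the uniform convergence $v_n^{(T)}(\mathscr X^D)\to\bar v^{(T)}(\mathscr X^D)$; since also $\int_{D_T}(\eta-\bar v)\,d\nu\ge0$, the measure-valued complementarity holds, and nonnegativity of the integrand upgrades it via Revuz duality to the pathwise statement (iii)(b). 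I expect the genuine obstacles to be precisely (a) identifying the weak limit $\nu$ as a bounded $\BB^D$-smooth measure, and (b) the simultaneous convergence of the martingale and additive-functional parts of the BSDE, which is exactly what is needed to localize the limiting penalization measure on the coincidence set $\{\bar v=\mathbb I_{D\setminus\overline D_0}\}$.
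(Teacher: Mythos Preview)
Your treatment of (ii)$\Leftrightarrow$(iii) matches the paper's (linear BSDE/weak-formulation equivalence from \cite{K:JFA} for part~(a), Revuz duality for part~(b)). The rest of your plan, however, inverts the paper's logic and understates one genuine difficulty.

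The paper proves part~(2) \emph{first}, essentially by citation: the penalization limit and the resulting RBSDE representation for $\bar v$ are exactly \cite[Theorem~5.4]{K:JEE}, which already yields (iii)(a) and the complementarity in the ``precise-version'' form~(\ref{eq.pp2}); quasi-continuity of $\bar v$ then upgrades this to (iii)(b). Your detailed outline of monotonicity, uniform energy bounds, and BSDE stability is a faithful sketch of what that theorem proves, so you are redoing cited work rather than something new. Then, crucially, the paper does \emph{not} prove (i)$\Leftrightarrow$(ii) by a Riesz-type extraction. Instead it uses Lions' classical result that the unique weak solution of the parabolic VI coincides with the monotone limit of the penalized solutions $v_n$; hence the weak solution $v$ equals $\bar v$ a.e. Part~(2) gives (iii)(a) for $\bar v$, and the assumed quasi-continuity of $v$ (together with the c\`adl\`ag property of $\bar v^{(T)}(\mathscr X^D)$) promotes $v=\bar v$ from a.e.\ to q.e., whence $\bar v$ is quasi-continuous and (iii)(b) follows. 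The measure $\nu$ arises as a byproduct of the RBSDE limit, and its boundedness is read off \cite[Proposition~5.7]{K:JEE}.

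Your direct route (i)$\Rightarrow$(ii) is plausible but not as light as you suggest. Since $v$ is only assumed in $C([0,T];L^2)\cap L^2(0,T;H^{\alpha}_0)$, not in $\WW(0,T)$, testing with $\eta=v^\varepsilon-\xi$ produces the term $\int_0^T\langle \partial_t v^\varepsilon, v^\varepsilon-v\rangle\,dt$, whose limit as $\varepsilon\to0$ is not automatic; handling it requires a Steklov/Mignot--Puel type argument you do not mention. Moreover, ``energy information shows $\nu$ has finite mass and is $\BB^D$-smooth'' is the nontrivial point: a dual-norm bound does not by itself give $\mbox{Cap}_1$-absolute continuity in the parabolic sense. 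The paper avoids both issues entirely by importing $\nu$ from the RBSDE construction. So your approach is different and can be made to work, but the paper's indirect route via part~(2) and Lions' penalization characterization is cleaner and is what actually closes the argument here.
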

\begin{proof}
By  the proof of \cite[Theorem 5.4]{K:JEE} (see Eq. (5.4) in \cite{K:JEE}), $\bar v^{(T)}(\mathscr X^D)$ satisfies (iii)(a),
and \eqref{eq.pp2},  where $\bar v$ is defined in (2).  If $\bar v$ is quasi-continuous, then by the comment following \eqref{eq.pp2},
we have that \eqref{eq.pp3} holds. Therefore, by the definition of the Revuz duality, we get (iii)(b).
This completes the proof of (2).

From  \cite[Proposition 3.6, Theorem 3.7, Theorem 5.8]{K:JFA} it follows that (ii)(a) and (iii)(a) are equivalent, whereas from the definition of the Revuz duality between $\nu$ and $A^\nu$
it follows that (ii)(b) and (iii)(b) are equivalent. Therefore (ii) is equivalent to (iii).
The proof of (1) shall be  completed by showing that (i) is equivalent to (iii). To do this end, we first note that by  \cite[Theorem 6.2, Chapter 3]{Lions}, there exists a unique weak solution $v$ to (\ref{eq1.4}), and
it is  the  limit of functions $v_n$ solving (\ref{eq7.wcom1}).
Suppose that  $v$ is a solution to  $(iii)$. 
By  \cite[Theorem 5.4]{K:JEE}, $\bar v^{(T)}(\mathscr X^D_t)=v^{(T)}(\mathscr X_t),\, t\in [0,T-s],\, P_{s,x}$-a.s. for q.e.  $(s,x)\in D_T$,
where $\bar v$ is defined in (2). Thus, $v$ ($=\bar v$ a.e.) is a weak solution to \eqref{eq1.4}.
Suppose now, that $v$ is a weak solution to \eqref{eq1.4}.  As already mentioned $v=\bar v$ a.e. 
Therefore, for any $(s,x)\in D_T$,
\[
E_{s,x}\int_0^{T-s}|\bar v-v|(X_r)\,dr=\int_s^T\int_Dp_D(t-s,x,y)|\bar v-v|(y)\,dy=0.
\]
By (2), $\bar v^{(T)}$ satisfies (iii)(a), and so  $\bar v^{(T)}(\mathscr X^D)$  is a c\`adl\`ag process.
Since $v$ is assumed to be quasi-continuous, we have that $v^{(T)}(\mathscr X^D)$ is a c\`adl\`ag process too (see the comments preceding Lemma 5.1 in \cite{Oshima2}). Therefore, using the above equation, we conclude that  $\hat v^{(T)}(\mathscr X^D_t)=v^{(T)}(\mathscr X^D_t),\, t\in [0,T-s],\, P_{s,x}$-a.s. for q.e. $(s,x)\in D_T$.  This implies, in particular, that $\bar v=v$ q.e., and so $\bar v$ is quasi-continuous.
As a result, applying (2), we get that $v^{(T)}(\mathscr X^D)$ satisfies (iii)(b). By \cite[Proposition 5.7]{K:JEE}, $\nu$
is bounded. So, $v$ satisfies (iii).
\end{proof}

In the sequel we will freely use, without special mention, the equivalent notions of solutions to \eqref{eq1.4}
stated in Proposition \ref{prop2.1v}, depending on the source we will refer to.

\begin{remark}
\label{rem.rbsde}
Proposition \ref{prop2.1v}(iii) says that the triple  $(\tilde v^{(T)}(\mathscr X^D), M, A^{\nu^{(T)}})$
is a solution of the so called Reflected Backward Stochastic Differential Equation (RBSDE) with terminal condition $\xi:=\varphi(X^D_T)$,
 right-hand side $f(y):=ay$, and barrier $L_t:= \mathbb I_{D\setminus\overline D_0}(\mathscr X^D_t)$ (see \cite{K:SPA}).
\end{remark}

In the following proposition, we use the notion of perfect PCAFs of $\mathfrak X^D$ (see \cite[Section IV]{BG} for the definition).

\begin{proposition}
\label{prop2.1vv}
Let $v$ be a weak solution to \mbox{\rm(\ref{eq1.4})} on $[0,T]$. Then  $v$ is quasi-continuous and there exists a perfect PCAF $\tilde A^{\nu^{(T)}}$ of $\mathfrak X^D$  such that condition (iii) of Proposition \ref{prop2.1v} holds for every $(s,x)\in D_T$.  Moreover, if $\varphi\in C_0(D)$, then $v\in C([0,T];C_0(D))$.
\end{proposition}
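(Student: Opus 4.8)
The plan is to obtain the three assertions from the penalized approximation of \eqref{eq1.4}, the probabilistic representations of Propositions \ref{prop.ppt1} and \ref{prop2.1v}, and the regularizing properties of $(P^D_t)$.

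First I would use that, by \cite[Theorem 6.2, Chapter 3]{Lions}, $v$ is the limit in $C([0,T];L^2(D;m))\cap L^2(0,T;H^{\alpha}_0(D))$ of the solutions $v_n$ of \eqref{eq7.wcom1}, and that a comparison argument for \eqref{eq7.wcom1} gives $0\le v_n(t)\le e^{at}P^D_t\varphi$ (on $\{v_n<0\}$ the penalty term vanishes and one is reduced to a linear equation with nonnegative initial datum, and globally the penalty has the sign needed for the upper bound). Hence the right-hand side $av_n-n(v_n-\mathbb I_{D\setminus\overline D_0})^+$ of \eqref{eq7.wcom1} is bounded, so by Proposition \ref{prop.ppt1} each $v_n$ has a quasi-continuous $m_1$-version given by the Feynman--Kac formula. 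Since $v_n\to v$ in $L^2(0,T;H^{\alpha}_0(D))$, the parabolic counterpart of \cite[Theorem 2.1.4]{FOT} (cf.\ \cite{Oshima,K:JEE}) produces a subsequence converging $\mbox{Cap}_1$-q.e.\ to a $\BB^D$-quasi-continuous $m_1$-version of $v$, which I then take as $v$; this proves the first assertion.

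Next, with $v$ quasi-continuous, Proposition \ref{prop2.1v}(2) applies: writing $\bar v:=\liminf_n v_n$ (which equals $v$ q.e., the sequence $v_n$ being monotone decreasing), $\bar v^{(T)}$ satisfies (iii)(a) and (iii)(b) of Proposition \ref{prop2.1v} for q.e.\ $(s,x)\in D_T$, with a positive bounded $\nu\in\MM_{0,b}(D_T)$ (boundedness by \cite[Proposition 5.7]{K:JEE}). To upgrade this to every $(s,x)$ I would replace the PCAF by a perfect PCAF $\tilde A^{\nu^{(T)}}$ in Revuz correspondence with $\nu^{(T)}$ via the regularization of \cite[Section IV]{BG}, and then argue that the function
\[
(s,x)\longmapsto \mathbb E_{s,x}\varphi(X^D_T)+a\,\mathbb E_{s,x}\!\int_0^{T-s} v^{(T)}(\mathscr X^D_r)\,dr-\mathbb E_{s,x}\tilde A^{\nu^{(T)}}_{T-s}
\]
is finely continuous on $D_T$ (the first two terms by the strong Feller property of $(P^D_t)$ applied to bounded Borel functions, the last one as the potential of a perfect PCAF) and coincides $\mbox{Cap}_1$-q.e.\ with the quasi-continuous, hence finely continuous, function $v^{(T)}$; since a nonempty finely open set is non-polar, two finely continuous functions agreeing q.e.\ agree everywhere, so the identity holds for every $(s,x)\in D_T$. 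Feeding it back into the reflected BSDE of \cite{K:SPA,K:JEE} --- whose terminal value $\varphi(X^D_T)$, driver $av^{(T)}$ and barrier $\mathbb I_{D\setminus\overline D_0}(\mathscr X^D)$ are now well defined and integrable for every starting point --- then yields, by uniqueness, that (iii) of Proposition \ref{prop2.1v} holds, with $\tilde A^{\nu^{(T)}}$, for every $(s,x)\in D_T$.

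Finally, if $\varphi\in C_0(D)$, then since the penalty datum of \eqref{eq7.wcom1} is bounded, the representation of Proposition \ref{prop.ppt1} together with $P^D_t(C_0(D))\subset C_0(D)$, the strong Feller property of $(P^D_t)$ and Dirichlet regularity of $D$ give $v_n\in C([0,T];C_0(D))$, and the uniform convergence $\sup_{t\in[0,T]}\|v_n(t)-v(t)\|_\infty\to0$ can be obtained, as in the proof of Theorem \ref{th5.1}(ii), from an It\^o-formula estimate for $(v_n^{(T)}-v^{(T)})(\mathscr X^D)$ combined with intrinsic ultracontractivity \eqref{eq.iuc12u} (splitting the additive-functional term at a small time $h$); hence $v\in C([0,T];C_0(D))$. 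I expect the main obstacle to be the passage from ``q.e.'' to ``every $(s,x)$'' together with the construction of the perfect PCAF: the obstacle $\mathbb I_{D\setminus\overline D_0}$ is unbounded on $D_0$ and discontinuous across $\partial D_0$, so one must verify carefully, using the complementarity condition of Proposition \ref{prop2.1v}, that $\nu$ is concentrated on $\{v=1\}\cap(D\setminus\overline D_0)$, and that every potential-theoretic object entering the mild formula above is genuinely finely continuous rather than merely defined q.e.
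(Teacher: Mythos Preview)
Your outline is plausible in parts but misses the paper's central device, which is exactly what resolves the difficulty you flag at the end. The paper does \emph{not} work directly with the irregular barrier $\mathbb I_{D\setminus\overline D_0}$; instead it constructs an auxiliary quasi-continuous barrier
\[
h(s,x)=1+a\,\mathbb E_{s,x}\int_0^{T-s} v^{(T)}(\mathscr X^{D_0}_r)\,dr
\]
satisfying $v^{(T)}\le h\le\mathbb I_{D\setminus\overline D_0}$ (the upper bound because $h\le 1+\|v\|_\infty R^{D_0}1<\infty$ on $D_0$ and $h\ge 1$ everywhere), so that $v$ solves the \emph{same} obstacle problem with $\mathbb I_{D\setminus\overline D_0}$ replaced by $h$. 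With this regular barrier one can invoke \cite[Corollary 4.4]{K:SPA} to get continuity of $A^{\nu^{(T)}}$, and then \cite[Theorem IV.3.8]{Stannat} gives quasi-continuity of $v$; your route via a parabolic analogue of \cite[Theorem 2.1.4]{FOT} might also work, but you would have to locate or prove that analogue. For the perfect-PCAF step the paper does not argue by fine continuity as you suggest: it writes $\tilde v^{(T)}=\tilde v^{(T)}_1-\tilde v^{(T)}_2$ with $\tilde v^{(T)}_2(s,x)=\int p_D(r-s,x,y)\,\nu^{(T)}(dr\,dy)$, notes this is \emph{lower semicontinuous} (because $p_D$ is), so the q.e.\ bound on $\tilde v^{(T)}_2$ extends to every $(s,x)$, and only then invokes \cite[Theorems IV.3.13, V.2.1]{BG}. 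Your ``regularization via \cite{BG}'' step hides precisely this issue: you need the potential of $\nu^{(T)}$ to be everywhere finite before you can build the perfect PCAF, and the l.s.c.\ observation is what bridges q.e.\ to everywhere.

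For $\varphi\in C_0(D)$ the paper again replaces the barrier, now by the \emph{continuous} time-independent function $\hat h(x)=1+\|v\|_\infty R^{D_0}1(x)$ (continuous by the strong Feller property), and then appeals to \cite[Theorem 1]{Stettner1}, a penalty-method result for optimal stopping with continuous obstacle, to obtain $v\in C([0,T];C(D))$; the $C_0(D)$ part follows from $v\le w$. Your alternative via an It\^o estimate on $(v_n^{(T)}-v^{(T)})(\mathscr X^D)$ is not obviously doomed, but to close it you would need $\int_{D_T}|v_n-v|\,d\nu\to 0$, and in the parabolic setting $\nu$ is only known to be in $\MM_{0,b}(D_T)$, not in a dual energy class; in Theorem \ref{th5.1}(ii) this very step used $\mu\in S_0(D)$ in an essential way. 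So the barrier-replacement trick is the missing idea that makes all three parts go through cleanly without having to confront the discontinuity of $\mathbb I_{D\setminus\overline D_0}$ head-on.
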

\begin{proof}
By \cite[Proposition 5.5]{K:JEE}), $v\le w$ , where $w$ is a weak solution of the  Cauchy-Dirichlet problem
\[
\frac{d w}{d t}-(\Delta^{\alpha})_{|D} w=a w,\quad w(0)=\varphi.
\]
It is clear that $w$ is bounded, so $v$ is bounded, too.  Set
\[
h(s,x)=1+\mathbb E_{s,x}\int_0^{T-s}v^{(T)}(\mathscr X^{D_0}_r)\,dr.
\]
By \cite[Theorem 6.3.1]{Oshima}, $h$ is quasi-continuous. By Proposition \ref{prop2.1v}(2),
\begin{align*}
v^{(T)}(s,x)&=\mathbb E_{s,x}v^{(T)}(\mathscr X^D_{\tau_{D_0}})+a\mathbb E_{s,x}\int_0^{\tau_{D_0}\wedge (T-s)}v^{(T)}(\mathscr X^D_r)\,dr-\mathbb E_{s,x}\int_0^{\tau_{D_0}\wedge (T-s)}\,dA^{\nu^{(T)}}_r\\& \le 1+a\mathbb E_{s,x}\int_0^{\tau_{D_0}\wedge (T-s)}v^{(T)}(\mathscr X^D_r)\,dr=h(s,x).
\end{align*}
Thus, $v^{(T)}\le h\le \mathbb I_{D\setminus\overline D_0}$ a.e. Therefore, in fact, $v$ is a weak solution to (\ref{eq1.4}) with $\mathbb I_{D\setminus\overline D_0}$ replaced by $h$. Now, applying  Proposition \ref{prop2.1v}(iii) (see also Remark \ref{rem.rbsde})
to $v$  with $\mathbb{I}_{D\setminus\overline D_0}$ replaced by $h$, and then  \cite[Corollary 4.4]{K:SPA}, we conclude that $A^{\nu^{(T)}}$ is continuous $P_{s,x}$-a.s.
for q.e. $(s,x)\in D_T$.  Therefore, by \cite[Theorem IV.3.8]{Stannat}, $v$ is quasi-continuous.  Set
\[
\tilde v^{(T)}(s,x)= \tilde v^{(T)}_1(s,x)-\tilde v_2^{(T)}(s,x),\quad (s,x)\in D_T,
\]
where
\[
\tilde v^{(T)}_1(s,x)=\int_D \varphi(y)p_D(s,x,y)\,dt+\int_s^{T} \int_D p_D(r-s,x,y) v^{(T)}(r,y)\,dr\,
\]
\[
\tilde v^{(T)}_2(s,x)=\int_s^{T} \int_D p_D(r-s,x,y)\,\nu^{(T)}(dr\,dy).
\]
By Proposition \ref{prop2.1v}(iii) and Revuz duality, for q.e. $(s,x)\in D_T$ we have
\[
\tilde v^{(T)}_2(s,x)=\mathbb E_{s,x}\int_0^{T-s}\,dA^{\nu^{(T)}}_r\le Ta\|v\|_\infty+\|\varphi\|_\infty.
\]
Since $\tilde v^{(T)}_2$ is   lower semi-continuous (as $p_D$ is lower semi-continuous), the above inequality holds for every $(s,x)\in D_T$. Therefore, by \cite[Theorem IV.3.13, Theorem V.2.1]{BG}, there exists a perfect PCAF $\tilde A^{\nu^{(T)}}$ such that
\[
\tilde v^{(T)}_2(s,x)=\mathbb E_{s,x}\int_0^{T-s} d\tilde A^{\nu^{(T)}}_r,\quad (s,x)\in D_T.
\]
Since $\tilde v^{(T)}= v^{(T)}$  a.e.,  we have
\[
\tilde v^{(T)}(s,x)=\mathbb E_{s,x}\varphi(X^D_T)+a\mathbb E_{s,x}\int_0^{T-s}\tilde v^{(T)}(\mathscr X_r)\,dr-\mathbb E_{s,x}\int_0^{T-s}\,d\tilde A^{\nu^{(T)}}_r,\quad (s,x)\in D_T.
\]
Applying now a standard argument (see \cite[Theorem 5.8]{K:JFA}) shows that condition  (iii) of Proposition \ref{prop2.1v} holds for every $(s,x)\in D_T$ with $A^{\nu^{(T)}}$ replaced by $\tilde A^{\nu^{(T)}}$. Assume that $\varphi \in C_0(D)$. Set
\[
\hat h(s,x)=1+\|v\|_{\infty}\mathbb E_{s,x}\int_0^{\infty}1(\mathscr X^{D_0}_r)\,dr.
\]
Since $ h\le\hat h\le \mathbb I_{D\setminus \overline D_0}$,  $v$
is a weak solution to (\ref{eq1.4}) with $\mathbb I_{D\setminus\overline D_0}$ replaced by $\hat h$. Observe that
\[
\hat h(s,x) = 1+\int_0^\infty \int_{D_0}p_{D_0}(r,x,y)\,dy=1+R^{D_0}1(x).
\]
Since $(P^{D_0}_t)$ is strongly Feller (as $\mathbb X$ is strongly Feller), $\hat h\in C(D)$. By probabilistic interpretations of $\tilde v_n$ (cf. \eqref{eq7.wcom1}) and $\tilde v$ (see Proposition \ref{prop2.1v}(2), Proposition \ref{prop.ppt1}(ii)), and \cite[Theorem 1]{Stettner1}, $v\in C([0,T]; C(D))$. Since $v\le w$, and $w \in C([0,T];C_0(D))$, by classical results, $v\in C([0,T];C_0(D))$ as well.
\end{proof}

\subsection{Existence and asymptotics}

\begin{theorem}
\label{th6.1}
\begin{enumerate}
\item[\rm(i)] For every $a\ge 0$ there exists
a unique weak solution $v$ to \mbox{\rm(\ref{eq1.4})}.

\item[\rm(ii)] Let $v_p$, $p>0$, be a weak solution to {\rm{(\ref{eq1.3})}}. Then
for every $\delta\in (0,T]$,
\begin{equation}
\label{eq6.tw}
\sup_{\delta\le t\le T}\|v_p(t)-v(t)\|_\infty+\int_0^T\|v_p(t)-v(t)\|_{H^{\alpha}(D)}\,dt\rightarrow 0
\quad\mbox{\rm as }p\rightarrow\infty.
\end{equation}
\item[\rm(iii)] If, in addition, $\varphi\in H^{\alpha}_0(D)$, then $\frac{d v_p}{d t}\rightarrow \frac{d v}{d t}$ weakly in $L^2(D_T;m_1)$, and if $\varphi\in C_0(D)$, then \mbox{\rm(\ref{eq6.tw})} holds with $\delta=0$.
\end{enumerate}
\end{theorem}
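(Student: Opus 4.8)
The plan is to carry the proof of the elliptic Theorem~\ref{th5.1} over to the space--time setting, replacing the process $\mathbb X^D$ by the space--time process $\mathfrak X^D$, the Green function by the heat kernel $p_D$, and the resolvent bounds by the heat--kernel estimates \eqref{eq.tg.1}--\eqref{eq.tg.2} and the intrinsic ultracontractivity bound \eqref{eq.iuc12u}. Part~(i) is essentially a citation: existence and uniqueness of the weak solution $v$ to \eqref{eq1.4} follow from \cite[Ch.~3, Thm.~6.2]{Lions} ($v$ is the monotone limit of the penalized solutions of \eqref{eq7.wcom1}, as already used in the proof of Proposition~\ref{prop2.1v}(1)), while quasi-continuity of $v$, and $v\in C([0,T];C_0(D))$ when $\varphi\in C_0(D)$, are Proposition~\ref{prop2.1vv}.

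For (ii) I begin with a priori bounds. Since $0$ is a subsolution and the solution $w$ of the linear Cauchy--Dirichlet problem $\frac{dw}{dt}-(\Delta^{\alpha})_{|D}w=aw$, $w(0)=\varphi$, is a supersolution of \eqref{eq1.3}, comparison gives $0\le v_p\le w$, so $\|v_p\|_{L^\infty(D_T)}\le e^{aT}\|\varphi\|_\infty=:M$. Testing \eqref{eq6.0a} with $v_p(t)$ and integrating over $[0,T]$ yields, as in \eqref{eq5.4},
\[
\sup_p\int_0^T\EE_D(v_p(t),v_p(t))\,dt<\infty,\qquad \sup_p\int_{D_T}bv_p^{p+1}\,dm_1<\infty,
\]
and hence, via $bv_p^p\le\|b\|_\infty+bv_p^{p+1}$, the measures $\mu_p:=bv_p^p\cdot m_1$ are bounded in total variation. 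Along a subsequence, $v_p\rightharpoonup v$ in $L^2(0,T;H^{\alpha}_0(D))$, $v_p\to v$ in $L^r(D_T)$ for every $r<\infty$ and $m_1$-a.e. (compact embedding $H^{\alpha}_0(D)\hookrightarrow\hookrightarrow L^q(D)$ plus a standard compactness argument in time, together with the $L^\infty$ bound), and $\mu_p\rightharpoonup\nu$, where $\nu\in\MM_{0,b}(D_T)$ is positive (its $\BB^D$-smoothness coming from the weak formulation and the energy bound, exactly as $\mu\in S_0(D)$ is obtained in Theorem~\ref{th5.1}). Fatou's lemma and (H1) force $v\le\mathbb I_{D\setminus\overline D_0}$ a.e., and passing to the limit in \eqref{eq6.0a} identifies $\frac{dv}{dt}-(\Delta^{\alpha})_{|D}v=av-\nu$, $v(0)=\varphi$. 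The strong convergence $\int_0^T\EE_D(v_p-v,v_p-v)\,dt\to0$ is then obtained exactly as in Theorem~\ref{th5.1}(ii), by testing with $v_p-v$: the absorption contribution satisfies $\int_{D_T}(v_p-v)bv_p^p\,dm_1\ge-\|b\|_\infty\|v_p-v\|_{L^1(D_T)}$ (on $\{v_p\le v\}\cap(D\setminus\overline D_0)$ one has $v_p\le v\le1$, so $v_p^p\le1$, while $b=0$ on $\overline D_0$), which together with $\int_0^T\EE_D(v_p-v,v_p-v)\,dt\ge0$ and the already established convergences pins the limit down to $0$ --- when $v$ fails to belong to $\WW(0,T)$ one runs the integration by parts in time on the penalized approximations of Proposition~\ref{prop2.1v}(2), or directly on the probabilistic representation. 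Finally, testing with $\gamma\eta-v_p$ for $\gamma\uparrow1$ and $\eta\le\mathbb I_{D\setminus\overline D_0}$, and using $\gamma^p\to0$ as in \eqref{eq5.tm}, yields the variational inequality of Remark~\ref{rem.eqovsop1}; hence $v$ is the weak solution to \eqref{eq1.4}, and by the uniqueness in (i) the whole sequence converges.

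For the uniform convergence on $[\delta,T]$ I subtract the probabilistic representations --- Proposition~\ref{prop2.1v}(iii) for $v^{(T)}$ and, for $v_p^{(T)}$, Proposition~\ref{prop.ppt1}(iii) applied with the bounded source $av_p-bv_p^p$. With $Y_t:=(v_p^{(T)}-v^{(T)})(\mathscr X^D_t)$ one has $dY_t=-aY_t\,dt+b(v_p^{(T)})^p(\mathscr X^D_t)\,dt-dA^{\nu^{(T)}}_t+d(M^p-M)_t$ and $Y_{T-s}=0$, so It\^o's formula and taking $\mathbb E_{s,x}$ give
\[
(v_p^{(T)}-v^{(T)})^2(s,x)\le 2a\,\mathbb E_{s,x}\!\int_0^{T-s}\!Y_t^2\,dt-2\,\mathbb E_{s,x}\!\int_0^{T-s}\!Y_t\,b(v_p^{(T)})^p(\mathscr X^D_t)\,dt+2\,\mathbb E_{s,x}\!\int_0^{T-s}\!|Y_t|\,dA^{\nu^{(T)}}_t.
\]
Using $|Y_t|\le2M$ and, again, $Y_t\,b(v_p^{(T)})^p\ge-\|b\|_\infty|Y_t|$, the first two terms are bounded by $C\,\mathbb E_{s,x}\int_0^{T-s}|Y_t|\,dt\le C'\|v_p-v\|_{L^r(D_T)}\to0$ uniformly in $(s,x)$ (H\"older together with \eqref{eq.tg.2}, and $v_p\to v$ in $L^r(D_T)$ for $r<\infty$). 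The last term I split at process time $h<\delta$. For $s\le T-\delta$, the Markov property and Proposition~\ref{prop2.1v}(iii) give
\[
\mathbb E_{s,x}\int_0^{h}\mathbf 1_{\{r<\tau_D\}}\,dA^{\nu^{(T)}}_r\le a\|v\|_\infty h+\sup_{\substack{\delta/2\le t,t'\le T\\|t-t'|\le h}}\|v(t)-v(t')\|_\infty+\sup_{\delta\le t\le T}\|P^D_h v(t)-v(t)\|_\infty=:\varepsilon(h),
\]
and $\varepsilon(h)\to0$ as $h\to0$ (the last term by the Feller property of $(P^D_t)$ and compactness of $\{v(t):\delta\le t\le T\}$ in $C_0(D)$); the remaining part is $\le\hat c\,h^{-d/2\alpha}\int_{D_T}|v_p^{(T)}-v^{(T)}|\,d\nu^{(T)}$ by \eqref{eq.tg.1} (or by \eqref{eq.iuc12u}), and $\int_{D_T}|v_p^{(T)}-v^{(T)}|\,d\nu^{(T)}\to0$ because $\nu$ is $\BB^D$-smooth and $v_p\to v$ in $L^2(0,T;H^{\alpha}_0(D))$, hence, along a subsequence, $\mathrm{Cap}_1$-q.e., the integrand being bounded by $2M$, so dominated convergence applies, and then along the whole sequence. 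Letting $p\to\infty$ and then $h\to0$ gives $\sup_{\delta\le t\le T}\|v_p(t)-v(t)\|_\infty\to0$, which together with the $H^{\alpha}_0(D)$-convergence yields \eqref{eq6.tw}.

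For (iii): if $\varphi\in H^{\alpha}_0(D)$, test \eqref{eq6.0a} with $\frac{dv_p}{dt}$ and use $(bv_p^p,\frac{dv_p}{dt})=\frac{d}{dt}\int_D\frac{bv_p^{p+1}}{p+1}\,dm$; dropping the nonnegative terms and using $\int_Db\varphi^{p+1}\,dm\le\|b\|_\infty|D|$ (valid since $\varphi\le1$ on $D\setminus\overline D_0$ and $b=0$ on $\overline D_0$) together with $\EE_D(\varphi,\varphi)<\infty$, one gets $\sup_p\int_0^T\|\frac{dv_p}{dt}(t)\|^2_{L^2(D)}\,dt<\infty$, whence $\frac{dv_p}{dt}\rightharpoonup\frac{dv}{dt}$ in $L^2(D_T;m_1)$ (the limit being identified distributionally). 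If $\varphi\in C_0(D)$, then $v\in C([0,T];C_0(D))$ by Proposition~\ref{prop2.1vv}, and a uniform-in-$p$ equicontinuity of $t\mapsto v_p(t)$ at $t=0$ in $C_0(D)$ (upper bound from $0\le v_p\le w\in C([0,T];C_0(D))$; lower bound from $v_p(t)\ge P^D_t\varphi-\int_0^tP^D_{t-s}(bv_p^p(s))\,ds$, estimated with the help of $\varphi\le1$ on $\{b>0\}$ and $\sup_p\int_{D_T}bv_p^{p+1}\,dm_1<\infty$) upgrades \eqref{eq6.tw} to $\delta=0$. The main obstacle throughout is the measure $\nu$, present precisely because $b$ is allowed to vanish on $\overline D_0$: it forces the Tanaka-type bookkeeping of the absorption term, the use of the $\BB^D$-smoothness of $\nu$ to get $\int_{D_T}|v_p^{(T)}-v^{(T)}|\,d\nu^{(T)}\to0$, and the separate, $\delta$-dependent treatment of the initial layer.
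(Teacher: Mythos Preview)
Your overall strategy---carry the proof of Theorem~\ref{th5.1} to the space--time setting via It\^o's formula, the heat-kernel bounds, and intrinsic ultracontractivity---is exactly the paper's strategy. The energy estimates, the identification of the limit via the $\gamma\eta$ trick, and the time-derivative bound in (iii) are all the same. The difference is organizational, and it creates a genuine gap in your treatment of the uniform convergence for general $\varphi$.

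The paper proceeds in two steps: first it assumes $\varphi\in H^{\alpha}_0(D)$ (so that $v\in\WW(0,T)$ and the test-function argument with $\eta=|v_p-v|$ in Proposition~\ref{prop2.1v}(ii) is licit, yielding $\int_{D_T}|v_p-v|\,d\nu\to0$), and for the uniform bound it assumes in addition $\varphi\in C_0(D)$, so that Proposition~\ref{prop2.1vv} gives $v\in C([0,T];C_0(D))$ and the Feller argument for $\mathbb E_{s,x}\int_0^h dA^{\nu^{(T)}}_r$ goes through on all of $[0,T]$. Only then does it pass to general $\varphi$ by approximation: choosing $\varphi_\varepsilon\in H^{\alpha}_0(D)\cap C_0(D)$, it controls $|v_p-v_p^\varepsilon|$ and $|v-v^\varepsilon|$ via the penalized problems \eqref{eq7.wcom1} and a BSDE comparison, obtaining $|v^\varepsilon-v|(T-s,x)\le e^{aT}\mathbb E_{s,x}|\varphi_\varepsilon-\varphi|(X^D_T)\le c\,\delta^{-d/2\alpha}\|\varphi_\varepsilon-\varphi\|_{L^1}$ for $s\le T-\delta$ by \eqref{eq.tg.1}. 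This is where the $\delta$ in \eqref{eq6.tw} actually comes from.

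You instead attack general $\varphi$ directly, and this is where your argument breaks. Your bound on $\mathbb E_{s,x}\int_0^h dA^{\nu^{(T)}}_r$ invokes ``compactness of $\{v(t):\delta\le t\le T\}$ in $C_0(D)$'' and the Feller property; but for $\varphi$ merely in $\BB^+_b(D)$, Proposition~\ref{prop2.1vv} gives only quasi-continuity of $v$, not $v(t)\in C_0(D)$ for $t>0$, and you have not supplied a separate argument for that regularity. Likewise, your claim that $\int_{D_T}|v_p-v|\,d\nu\to0$ because ``$v_p\to v$ in $L^2(0,T;H^{\alpha}_0(D))$, hence along a subsequence $\mathrm{Cap}_1$-q.e.'' is not justified: the parabolic capacity $\mathrm{Cap}_1$ is built from $\BB^D$, which involves the time derivative, so $L^2(0,T;H^{\alpha}_0(D))$ convergence alone does not yield $\mathrm{Cap}_1$-q.e.\ convergence. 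The paper avoids this by testing directly with $|v_p-v|$, which requires $v_p,v\in\WW(0,T)$---again the reason for first assuming $\varphi\in H^{\alpha}_0(D)$. Your parenthetical ``when $v$ fails to belong to $\WW(0,T)$ one runs the integration by parts on the penalized approximations'' gestures at a fix but is not an argument. In short: reinstate the two-step structure (smooth $\varphi$ first, then approximate), and the gap closes.
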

\begin{proof}
Part (i) follows from \cite[Theorem 6.2, Chapter 3]{Lions}. By \cite[Corollary 5.9]{K:JFA},
\begin{equation}
\label{eq6.1}
0\le v_p(s,x)\le e^{sa}\|\varphi\|_\infty.
\end{equation}
By Proposition \ref{prop.ppt1},
\begin{equation}
\label{eq7.kwlezy}
v_p^{(T)}(s,x)=\mathbb E_{s,x}\varphi(X^D_T)+a\mathbb E_{s,x}\int_0^{T-s}v^{(T)}_p(\mathscr X^D_r)\,dr- \mathbb E_{s,x}\int_0^{T-s}b(v_p^{(T)})^p(\mathscr X^D_r)\,dr.
\end{equation}
Therefore, by (\ref{eq6.1}) and \cite[Lemma 94, page 306]{DellacherieMeyer},  there exists a subsequence (still  denoted by $\{v_p\}$) such that
$\{v_p\}$ is convergent a.e. From  this and (\ref{eq6.1}), we infer that  for all $q\ge 1$ and $T\ge 0$, $\{v_p\}$ converges  in $L^q(D_T)$ to some $v\in L^q(D_T)$. Taking $\eta=v_p$ as a test function in (\ref{eq6.0})
we obtain
\[
\|v_p(s)\|^2_{L^2(D;m)}+\int_0^s\EE_D(v_p,v_p)\,dt+\int_0^sbv_p^{p+1}\,dt\nonumber\\
\le \|\varphi\|_{L^2(D;m)}^2+a\int_0^s\|v_p\|^2_{L^2(D;m)}\,dt.
\]
Hence, up to a subsequence, $v_p\rightarrow v$ weakly in $L^2(0,T;H^{\alpha}_0(D))$. Observe also that,  since
$\sup_{p\ge 2}\int_0^sbv_p^{p+1}\,dt<\infty$, we have $v\le \mathbb{I}_{D\setminus \overline D_0}$ a.e. Taking $\eta=\frac{dv_p}{dt}$ in (\ref{eq6.0}) we get
\begin{align}
\label{eq6.3}
\nonumber\int_r^s\Big(\frac{dv_p}{dt}\Big)^2\,dt
&+\frac12\EE_D(v_p(s),v_p(s))+\frac{a}{2}\int_Dv^2_p(r)\,dm+\int_D b\frac{{v_p}^{p+1}(s)}{p+1}\,dm\\&
\quad= \frac12 \EE_D(v_p(r),v_p(r))+\frac{a}{2}\int_Dv^2_p(s)\,dm+\int_D b\frac{{v_p}^{p+1}(r)}{p+1}\,dm.
\end{align}
The rest of the proof we divide into two steps.\\
{\bf Step 1.} We  assume additionally that $\varphi\in H^{\alpha}_0(D)$. Then, by (\ref{eq6.1}) and (\ref{eq6.3}),
\[
\int_0^s\Big(\frac{dv_p}{dt}\Big)^2\,dt+\frac12\EE_D(v_p(s),v_p(s))\le \frac12\EE_D(\varphi,\varphi)+\frac{a}{2}m(D)e^{2s}\|\varphi\|^2_\infty+\frac{\|b\|_\infty}{p+1}.
\]
From this we conclude that $\frac{d v}{d t}\in L^2(D_T;m_1)$ and, up to a subsequence, $\frac{d v_p}{d t}\rightarrow \frac{d v}{d t}$ weakly in $L^2(D_T;m_1)$.
Now, taking $\eta=v_p-v$ as a test function in (\ref{eq6.0}), we find
\begin{align*}
&\|v_p(s)-v(s)\|^2_{L^2(D;m)}+\int_0^s\EE_D(v_p-v,v_p-v)\,dt\\
&\quad \le \int_0^sa(v_p-v,v^p)\,dt-\int_0^s(v_p-v,bv_p^p)\,dt\\
&\qquad-\int_0^s\EE_D(v,v_p-v)\,dt-\int_0^s(\frac{d v}{d t}, v_p-v)\,dt.
\end{align*}
Since $v\le \mathbb{I}_{D\setminus \overline D_0}$ a.e., we have
\begin{align*}
-\int_0^s(v_p-v,bv_p^p)\,dt\le \int_0^s((v-v_p)^+,bv_p^p)\,dt
&\le \int_0^s((v-v_p)^+,bv^p)\,dt\\&
\le \|b\|_\infty\int_0^s|v-v_p|\,dt,
\end{align*}
which converges to zero as $p\rightarrow\infty$. Substituting the above inequality into the previous one
and using already  proven convergences of the sequence $\{v_p\}$, we conclude that
\[
\int_0^T\EE_D(v_p-v,v_p-v)\,dt \rightarrow 0
\]
as $p\rightarrow\infty$. From
this  and a parabolic counterpart of the argument given in (\ref{eq5.tm}) we infer that $v$ is a weak solution to (\ref{eq1.4}). Applying now a uniqueness argument shows the convergence of the whole sequence $\{v_p\}$.

To prove the uniform convergence of $\{v_p\}$ in (\ref{eq6.tw}), we first assume additionally that $\varphi\in C_0(D)$.
Then, since $(P^D_t)_{t\ge0}$ is Fellerian, a fixed point argument shows that  $v_p\in C([0,T];C_0(D))$. By   Proposition \ref{prop.ppt1} and  Proposition \ref{prop2.1vv} (we drop superscript $\tilde\,\,\,$), 
\begin{align*}
(v_p^{(T)}-v^{(T)})(\mathscr X^D_t)&=a\int_t^{T-s}(v^{(T)}_p-v^{(T)})(\mathscr X^D_r)\,dr-\int_t^{T-s}b(v_p^{(T)})^p(\mathscr X^D_r)\,dr\\&\quad +
\int_t^{T-s}\,dA^{\nu^{(T)}}_r-\int_t^{\tau_D}\,d(M_r^p-M_r),\quad t\in [0,T-s],\, P_{s,x}\mbox{-a.s.}
\end{align*}
for  $(s,x)\in D_T$.
By It\^o's formula
\begin{align*}
&|v_p^{(T)}-v^{(T)}|^2(s,x)=2aE_{s.x}\int_0^{T-s}|v^{(T)}_p-v^{(T)}|^2(\mathscr X^D_r)\,dr\\&\quad\quad -2\mathbb E_{s,x}\int_0^{T-s}(v_p^{(T)}-v^{(T)})b(v_p^{(T)})^p(\mathscr X^D_r)\,dr+
2\mathbb E_{s,x}\int_0^{T-s}(v_p^{(T)}-v^{(T)})(\mathscr X^D_r)\,dA^{\nu^{(T)}}_r.
\end{align*}
So, by (H1) and (\ref{eq.tg.2}), there exists $q>1$ such that
\begin{align}
\label{eq7.5}
|v^{(T)}_p(s,x)-v^{(T)}(s,x)|^2&\le 2c(a+\|b\|_\infty)\|v_p-v\|_{L^q(D_T)}
\nonumber\\
&\quad+2\mathbb E_{s,x}\int_0^{T-s}|v^{(T)}_p(\mathscr X^D_r)-v^{(T)}(\mathscr X^D_r)|\,dA^{\nu^{(T)}}_r.
\end{align}
By ultracontractivity of $(P^D_t)_{t\ge 0}$ (cf. \eqref{eq.iuc12u}), for $h>0$, we have
\begin{align*}
&\mathbb E_{s,x}\int_h^{(T-s)\vee h}|v^{(T)}_p(\mathscr X^D_r)-v^{(T)}(\mathscr X^D_r)|\,dA^{\nu^{(T)}}_r\\
&\quad=
\int_D\int_{s+h}^{T\vee(s+h)}|v_p(T-r,y)-v(T-r,y)|p_D(r-s,x,y)\nu^{(T)}(dr\,dy)\\
&\quad\le c\beta_h\frac{e^{-h \lambda_1^D} \|\varphi_1^D\|^2_\infty}{\lambda_1^D} \int_D\int_{s+h}^{T\vee(s+h)}|v_p(T-r,y)-v(T-r,y)|\nu^{(T)}(dr\,dy)\\
&\quad\le c\beta_h\frac{e^{-h\lambda_1^D}  \|\varphi_1^D\|^2_\infty}{\lambda_1^D} \int_D\int_0^T|v_p(r,y)-v(r,y)|\nu(dr\,dy).
\end{align*}
Taking $\eta=|v_p-v|$ as a test function in Proposition \ref{prop2.1v}(ii) (remind here that, as shown above,  $v_p,v\in H^1(0,T;L^2(D;m))$, so $|v_p-v|\in H^1(0,T;L^2(D;m))\subset \WW(0,T)$) and using the already proven convergences of $\{v_p\}$ shows  that
the right-hand side of the above inequality tends to zero as $p\rightarrow \infty$. Next, by (\ref{eq6.1}) and Proposition \ref{prop2.1vv},
\begin{align*}
&\mathbb E_{s,x}\int_0^h|v^{(T)}_p(\mathscr X^D_r)-v^{(T)}(\mathscr X^D_r)|\,dA^{\nu^{(T)}}_r\le
c \mathbb E_{s,x}\int_0^h\,dA^{\nu^{(T)}}_r\\
&\quad= ac\mathbb E_{s,x}\int_0^{h}v^{(T)}(\mathscr X^D_r)\,dr +c\mathbb E_{s,x}v^{(T)}(\mathscr X^D_h)-cv^{(T)}(s,x) \\
&\quad\le ac^2h+
c(\mathbb E_{s,x}v^{(T)}(\mathscr X^D_h)-v^{(T)}(s,x)).
\end{align*}
Observe that
\begin{align*}
\mathbb E_{s,x}v^{(T)}(\mathscr X^D_h)-v^{(T)}(s,x)=P^D_h(v(T-s-h,\cdot))(x)-v(T-s,x).
\end{align*}
Since $v\in C([0,T]; C_0(D))$ (see Proposition \ref{prop2.1vv}), then using the  Feller property of $(P^D_t)$ shows that
\[
\sup_{0\le s\le T}\|P^D_h(v(T-s-h,\cdot))-v(T-s,\cdot)\|_\infty\rightarrow 0 \quad\mbox{as}\quad h\searrow 0.
\]
Since we know that $v_p\rightarrow v $ in $L^q(D_T)$, from (\ref{eq7.5}) and the estimates following it, we deduce that $\|v_p-v\|_\infty\rightarrow 0$ as $p\rightarrow \infty$.

{\bf Step 2.} The general case. Let $\varphi_\varepsilon\in H^{\alpha}_0(D)\cap C_0(D)$ be a positive bounded function such that  $\|\varphi_\varepsilon-\varphi\|_{L_2(D;m)}\le \varepsilon$ and $\varphi_\varepsilon\le\mathbb{I}_{D\setminus\overline D_0}$ a.e. Let $v$ be a weak solution to (\ref{eq1.4}),  $v_p^\varepsilon$ be a weak solution to (\ref{eq1.3})  with $\varphi$
replaced by $\varphi_\varepsilon$, and  $v^\varepsilon$ be a weak solution to (\ref{eq1.4})  with $\varphi$ replaced by $\varphi_\varepsilon$. By a standard argument,
\[
\sup_{t\le T}\|v^\varepsilon_p(t)-v_p(t)\|_{L^2(D;m)}+\Big(\int_0^T\EE_D(v^\varepsilon_p-v_p,v^\varepsilon_p-v_p)\,dt\Big)^{1/2}\le 2e^{a2T}\|\varphi_\varepsilon-\varphi\|_{L^2(D;m)},
\]
\[
\sup_{t\le T}\|v^\varepsilon(t)-v(t)\|_{L^2(D;m)}+\Big(\int_0^T\EE_D(v^\varepsilon-v,v^\varepsilon-v)\,dt\Big)^{1/2}\le 2e^{a2T}\|\varphi_\varepsilon-\varphi\|_{L^2(D;m)}.
\]
Hence
\begin{align*}
&\sup_{t\le T}\|v_p(t)-v(t)\|_{L^2(D;m)}+\Big(\int_0^T\EE_D(v_p-v,v_p-v)\,dt\Big)^{1/2}\\
&\quad \le 4e^{a2T}\varepsilon+
\sup_{t\le T}\|v^\varepsilon_p(t)-v^\varepsilon(t)\|_{L^2(D;m)} +\Big(\int_0^T\EE_D(v^\varepsilon_p-v^\varepsilon,v^\varepsilon_p-v^\varepsilon)\,dt\Big)^{1/2}.
\end{align*}
It follows from this and Step 1 that the second term in (\ref{eq6.tw}) tends to zero as $p\rightarrow \infty$.
For the uniform convergence in (\ref{eq6.tw}), let $v_n$ be a weak solution of (\ref{eq7.wcom1}) and $v^\varepsilon_n$
be a weak solution of  (\ref{eq7.wcom1}) with $\varphi$ replaced by $\varphi_\varepsilon$. Set $w^\varepsilon_n= v_n-v^\varepsilon_n$.
Observe that 
\[
\frac{d w^\varepsilon_n}{d t}-(\Delta^{\alpha})_{|D} w^\varepsilon_n=aw^\varepsilon_n+F_n(\cdot, w^\varepsilon_n),\quad w^\varepsilon_n(0)=\varphi-\varphi_\varepsilon,
\]
where $F_n(t,y)=-n(y+v^\varepsilon_n-\mathbb{I}_{D\setminus\overline D_0})^++n(v^\varepsilon_n-\mathbb{I}_{D\setminus\overline D_0})^+$.
By Proposition \ref{prop.ppt1}(ii) for q.e. $(s,x)\in D_T$ and any  $t\in [0,T-s]$,
\begin{align*}
w^{\varepsilon,(T)}_n(\mathscr X^D_t)&=E\Big((\varphi-\varphi_\varepsilon)(X^D_T)+a\int_t^{T-s} w^{\varepsilon,(T)}_n(\mathscr X^D_r)\,dr\\&\quad\quad\quad+\int_t^{T-s} F_n^{(T)}(\mathscr X^D_r,w^{\varepsilon,(T)}_n(\mathscr X^D_r))\,dr\Big|\FF_t\Big).
\end{align*}
From this and \cite[Lemma 2.3]{KR:JFA} (see also Remark \ref{rem.bsde}), we deduce that
\[
|v_n^\varepsilon(T-s,x)-v_n(T-s,x)|\le e^{aT}\mathbb E_{s,x}|\varphi_\varepsilon(X^D_T)-\varphi(X^D_T)|.
\]
By Proposition \ref{prop2.1v},  and (\ref{eq.tg.1}), for every $s\in [0,T-\delta]$,
\begin{align*}
|v^\varepsilon(T-s,x)-v(T-s,x)|&\le e^{aT}\mathbb E_{s,x}|\varphi_\varepsilon(X^D_T)-\varphi(X^D_T)|\\
&=e^{aT}\int_D|\varphi_\varepsilon(y)-\varphi(y)|p_D(T-s,x,y)\,dy\\
&\le c\frac{e^{aT}}{\delta^{d/2\alpha}}\int_D|\varphi_\varepsilon(y)-\varphi(y)|\,dy.
\end{align*}
Analogously, we get the above estimate for $|v_p^\varepsilon(T-s,x)-v_p(T-s,x)|$. From this and Step 1, we get the desired result.
\end{proof}

\section{Asymptotics as $t\rightarrow \infty$}

As in Sections 3-7, we assume that  the hypotheses (H1), (H2) are satisfied  and $D$ is a bounded Lipschitz domain.

\subsection{Cauchy-Dirichlet problem}

\begin{lemma}
\label{lm7.1}
Let $v_p$ be a weak solution to {\rm (\ref{eq1.3})}. Assume that there exists $c>0$ such that $b\ge c$ on $D$. Then
\[
v_p(s,x)\le \max\{\|\varphi\|_\infty,(a/c)^{1/(p-1)}\},\quad x\in D,\, s\ge 0.
\]
\end{lemma}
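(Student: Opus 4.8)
The plan is to show that the constant $K:=\max\{\|\varphi\|_\infty,(a/c)^{1/(p-1)}\}$ --- which is strictly positive since $a>0$, and here $p>1$, the bound being vacuous otherwise --- dominates $v_p$. The single elementary fact I would use is that at any point where $v_p>K$ one has, by $b\ge c$ and $p>1$,
\[
b\,v_p^{p-1}\ge c\,v_p^{p-1}>c\,K^{p-1}\ge a ,
\]
so that on $\{v_p>K\}$ the reaction term $av_p-bv_p^{p}=v_p\,(a-bv_p^{p-1})$ is strictly negative. The point to be careful about is that $K$ is \emph{not} a supersolution of the Cauchy--Dirichlet problem in the naive sense, since it does not vanish on $D^{c}$; so one cannot just quote a comparison principle, and the argument has to be localized to the region $\{v_p>K\}$.

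For the $L^2$ part of the conclusion I would argue by energy estimate. For a.e.\ $t$ the truncation $\eta_t:=(v_p(t)-K)^{+}$ lies in $H^{\alpha}_0(D)$ (it is bounded, vanishes off $D$ because $K>0$, and is a unit contraction of $v_p(t)\in H^{\alpha}_0(D)$ in the Dirichlet form). Testing the equation in the form of Remark~\ref{rem.th6.impar5}(a) against $\eta=\eta_t$ and using the standard chain rule in $\WW(0,T)$ gives, for a.e.\ $t$,
\[
\tfrac12\tfrac{d}{dt}\|\eta_t\|_{L^2(D;m)}^{2}+\EE_D\big(v_p(t),\eta_t\big)=\big(av_p(t)-bv_p^{p}(t),\,\eta_t\big).
\]
Here $\EE_D(v_p(t),\eta_t)\ge\EE_D(\eta_t,\eta_t)\ge0$ (a standard Dirichlet-form inequality, also visible from the kernel representation \eqref{eq2.1.dffhs3}), and the right-hand side is $\le0$ since its integrand vanishes on $\{v_p(t)\le K\}$ and is $\le0$ on $\{v_p(t)>K\}$ by the displayed inequality. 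Hence $t\mapsto\|\eta_t\|^{2}_{L^2(D;m)}$ is nonincreasing; as $\eta_0=(\varphi-K)^{+}=0$ (because $K\ge\|\varphi\|_\infty$), this forces $\eta_t\equiv0$, i.e.\ $v_p(t)\le K$ $m$-a.e.\ in $D$ for every $t\ge0$.

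To upgrade this to the pointwise bound stated in the lemma I would argue probabilistically (alternatively one could invoke continuity of $v_p$ on $(0,\infty)\times D$, as in the proof of Theorem~\ref{th6.1}). Fix $T>0$ and $(s,x)\in D_T$. Since $v_p$ is bounded (Theorem~\ref{th6.impar5}), $f:=av_p-bv_p^{p}$ is bounded, so by Proposition~\ref{prop.ppt1}(iii) there is a uniformly integrable martingale $M$ under $P_{s,x}$, $M_0=0$, with
\[
\tilde v_p^{(T)}(\mathscr X^D_t)=\varphi(X^D_T)+\int_t^{T-s}f^{(T)}(\mathscr X^D_r)\,dr-\int_t^{T-s}\,dM_r,\qquad t\in[0,T-s],
\]
where $\tilde v_p^{(T)}$ is the everywhere-defined version of $v_p^{(T)}$ from Proposition~\ref{prop.ppt1}, which vanishes off $D_T$ and at the cemetery. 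Let $\sigma:=\inf\{t\ge0:\tilde v_p^{(T)}(\mathscr X^D_t)\le K\}\wedge(T-s)$. For $r<\sigma$ one has $\tilde v_p^{(T)}(\mathscr X^D_r)>K>0$, which forces $X^D_{s+r}\in D$ (so $b(X^D_{s+r})\ge c$) and hence $f^{(T)}(\mathscr X^D_r)<0$ by the elementary inequality; and $\tilde v_p^{(T)}(\mathscr X^D_\sigma)\le K$ --- on $\{\sigma<T-s\}$ by c\`adl\`ag-ness of $\tilde v_p^{(T)}(\mathscr X^D)$, and on $\{\sigma=T-s\}$ because $\tilde v_p^{(T)}(\mathscr X^D_{T-s})=\varphi(X^D_T)\le\|\varphi\|_\infty\le K$. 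Subtracting the displayed identity at $t=0$ and $t=\sigma$, taking $\mathbb E_{s,x}$ and using optional stopping ($\mathbb E_{s,x}M_\sigma=0$) gives
\[
v_p^{(T)}(s,x)=\mathbb E_{s,x}\Big[\tilde v_p^{(T)}(\mathscr X^D_\sigma)+\int_0^{\sigma}f^{(T)}(\mathscr X^D_r)\,dr\Big]\le K ,
\]
and letting $T\to\infty$ finishes the proof. The only genuine work here is the bookkeeping of the localization --- notably checking that before $\sigma$ the space--time process still lies in $D_T$, so that the hypothesis $b\ge c$ applies along $[0,\sigma)$; the rest is routine.
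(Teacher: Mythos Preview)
Your proof is correct. The paper proves the lemma in two lines by applying the Tanaka--Meyer formula to the semimartingale $v_p^{(T)}(\mathscr X^D)-M_p$: this gives
\[
(v^{(T)}_p(s,x)-M_p)^+\le \mathbb E_{s,x}(\varphi(X^D_T)-M_p)^+
+\mathbb E_{s,x}\int_0^{T-s}\mathbf{1}_{\{v^{(T)}_p>M_p\}}\bigl(av^{(T)}_p-b(v^{(T)}_p)^p\bigr)(\mathscr X^D_r)\,dr\le 0,
\]
using exactly the same elementary inequality $ay-cy^p\le 0$ for $y\ge M_p$ that you isolate at the start.

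Your energy-estimate argument is the analytic twin of this: testing against $(v_p-K)^+$ in $H^{\alpha}_0(D)$ is precisely the PDE translation of applying Tanaka--Meyer to $(Y-K)^+$, and the Dirichlet-form inequality $\EE_D(v_p,\eta_t)\ge\EE_D(\eta_t,\eta_t)$ plays the role of the local-time term having the right sign. So this first route is really the same proof in different clothing; it has the advantage of being self-contained for a PDE reader and of not invoking stochastic calculus. Your second, stopping-time route is a genuinely different probabilistic argument --- it replaces the convex-function (Tanaka--Meyer) machinery by an optional-stopping localisation to $\{v_p>K\}$ --- and is also valid; the debut-of-a-closed-set reasoning and the observation that $\tilde v_p^{(T)}>0$ forces the process to stay inside $D$ are the right checks. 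The paper's route is the shortest of the three; your stopping-time route is the most elementary in that it uses only the martingale property. The remark about ``letting $T\to\infty$'' is superfluous: you already have $v_p^{(T)}(s,x)\le K$ for every $(s,x)\in D_T$, which is the full statement.
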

\begin{proof}
Set $M_p:=\max\{\|\varphi\|_\infty,(a/c)^{1/(p-1)}\}$. By Proposition \ref{prop.ppt1} and the Tanaka-Meyer formula (see, e.g., \cite[IV.Theorem 70]{Protter})
\begin{align*}
&(v^{(T)}_p(s,x)-M_p)^+\le \mathbb E_{s,x}(\varphi(X^D_T)-M_p)^+\\
&\qquad+\mathbb E_{s,x}\int_0^{T-s}\mathbf{1}_{\{v^{(T)}_p(\mathscr X^D_r)>M_p\}}
\Big(av^{(T)}_p(\mathscr X^D_r)-(b(v^{(T)}_p)^p)(\mathscr X^D_r)\Big)\,dr\le 0,
\end{align*}
the last inequality being a consequence of the fact that $ay-cy^p\le 0$, $y\ge M_p$.
\end{proof}
\begin{corollary}
Under the assumptions of Lemma \ref{lm7.1}, the reaction measure $\mu$ for the unique weak solution $v$ to \mbox{\rm(\ref{eq1.4})} is of the form
$\mu=g\cdot m_1$ with some positive $g\in L^\infty(\mathbb R^+\times D)$.
\end{corollary}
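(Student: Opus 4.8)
The plan is to use Lemma~\ref{lm7.1} to bound the reaction densities $bv_p^p$ uniformly in $p$ in $L^\infty(\mathbb R^+\times D)$, and then to pass to the limit $p\to\infty$. First observe that the hypothesis $b\ge c>0$ forces $\{b=0\}=\emptyset$, so necessarily $D_0=\emptyset$ in (H1); hence $\mathbb I_{D\setminus\overline D_0}=\mathbf 1_D$ and, by (H2), $0\le\varphi\le 1$, so $\|\varphi\|_\infty\le 1$. By Lemma~\ref{lm7.1},
\[
0\le v_p(s,x)\le M_p:=\max\{\|\varphi\|_\infty,(a/c)^{1/(p-1)}\},\qquad x\in D,\ s\ge 0,
\]
and therefore, for every $p\ge 2$,
\[
M_p^p=\max\{\|\varphi\|_\infty^p,(a/c)^{p/(p-1)}\}\le\max\{1,(a/c)^2\}=:K.
\]
Writing $g_p:=bv_p^p$, this yields $\sup_{p\ge 2}\|g_p\|_{L^\infty(\mathbb R^+\times D)}\le\|b\|_\infty K=:C_1<\infty$.

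Next I would let $p\to\infty$. Fix $T>0$. From the proof of Theorem~\ref{th6.1} we know that $v_p\to v$ strongly in $L^2(0,T;H^{\alpha}_0(D))$ and in $L^q(D_T)$ for every $q\ge 1$, that $v$ is the weak solution to \eqref{eq1.4}, and that $\mu$ is the positive measure attached to $v$ by Proposition~\ref{prop2.1v}(ii)(a); moreover, passing to the limit in the weak formulation of \eqref{eq1.3} written against test functions $\eta\in\WW_T(0,T)$ shows that $g_p\cdot m_1\to\mu$ vaguely on $D_T$, i.e. $\int_{D_T}\phi\,g_p\,dm_1\to\int_{D_T}\phi\,d\mu$ for every $\phi\in C_c(D_T)$ (note $C^\infty_c(D_T)\subset\WW_T(0,T)$ and $\tilde\phi=\phi$ for such $\phi$, while $\{g_p\cdot m_1\}$ is bounded in total variation on $D_T$). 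Since $0\le g_p\cdot m_1\le C_1\,m_1$ for all $p\ge2$, testing against $0\le\phi\in C_c(D_T)$ and letting $p\to\infty$ gives $\int_{D_T}\phi\,d\mu\le C_1\int_{D_T}\phi\,dm_1$, whence $\mu\le C_1\,m_1$ as Borel measures on $D_T$. Consequently $\mu\ll m_1$ and $g:=d\mu/dm_1$ satisfies $0\le g\le C_1$ a.e. Letting $T\to\infty$, the reaction measures being consistent in $T$, shows $\mu=g\cdot m_1$ with $g\in L^\infty(\mathbb R^+\times D)$ and $0\le g\le C_1$, which is the assertion. (Equivalently, by the Banach--Alaoglu theorem a subsequence of $\{g_p\}$ converges weak-$*$ in $L^\infty(D_T)$ to some $g$ with $0\le g\le C_1$, and the vague convergence $g_p\cdot m_1\to\mu$ identifies $g$ as the density of $\mu$.)

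The routine items — the integration by parts in $\WW_T(0,T)$, convergence of the linear terms in the weak formulation, and the consistency of the measures $\mu$ for varying $T$ — I would only sketch. The step that requires care is the initial reduction: one must verify that Lemma~\ref{lm7.1} really delivers a bound on $bv_p^p$ that is \emph{uniform in $p$}, which is exactly why the structural observation $D_0=\emptyset$, $\|\varphi\|_\infty\le 1$ is needed (were $\|\varphi\|_\infty>1$ the quantities $M_p^p$ would blow up), together with the crude estimate $M_p^p\le\max\{1,(a/c)^2\}$ for $p\ge 2$. The other point to be careful about is that the accumulation point of $g_p\cdot m_1$ really is the reaction measure $\mu$ of the (unique) solution $v$ to \eqref{eq1.4}, and not of some other limit; this is guaranteed by the uniqueness statement in Theorem~\ref{th6.1}(i) together with the characterization of $\mu$ in Proposition~\ref{prop2.1v}.
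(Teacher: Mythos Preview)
Your proof is correct and follows the same approach as the paper's two-line argument (Lemma~\ref{lm7.1} bounds $bv_p^p$ uniformly in $p$, then Theorem~\ref{th6.1} passes to the limit). You supply the details the paper suppresses, notably the structural observation $D_0=\emptyset$, $\|\varphi\|_\infty\le 1$ needed to make $M_p^p$ uniformly bounded, and the weak-$*$ compactness step identifying the limit of $g_p\cdot m_1$ with the reaction measure $\mu$.
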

\begin{proof}
By Lemma \ref{lm7.1}, the term $bv_p^p$ in \eqref{eq1.3} is bounded uniformly in $p\ge 1$.  Therefore, applying  Theorem \ref{th6.1} gives the result.
\end{proof}

\begin{lemma}
\label{lm7.2}
Let $v_p$ be a weak solution to \mbox{\rm (\ref{eq1.3})} and  $a\in (\lambda_1^D,\lambda_1^{D_0})$. Then for every $\delta, t_0>0$ there exist $M, c>0$ such that
\begin{equation}
\label{eq7.1}
v_p(t,x)\le M,\quad (t,x)\in \mathbb R^+\times D,\quad p\ge 1+\delta,
\end{equation}
and
\begin{equation}
\label{eq7.2}
c\varphi^D_1(x)\le v_p(t,x),\quad t\ge t_0,\, x\in D,\quad p\ge 1+\delta.\qquad
\end{equation}
\end{lemma}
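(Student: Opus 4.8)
The plan is to obtain both bounds by comparing $v_p$ with \emph{time‑independent} sub/supersolutions of the elliptic logistic equation (\ref{eq1.1}) — the very ones used in the proof of Theorem \ref{th5.1} — after checking that the relevant constants can be taken uniform in $p\ge 1+\delta$ (the exponent $2$ in Theorem \ref{th5.1} is replaced by $1+\delta$, using that $s\mapsto s^{p-1}$ is non‑decreasing and $\le s^{\delta}$ on $[0,1]$ when $p-1\ge\delta$). For (\ref{eq7.1}): on $[0,1]$ there is nothing to do, since $0\le v_p(t,x)\le e^{ta}\|\varphi\|_\infty\le e^a\|\varphi\|_\infty$ by (\ref{eq6.1}); for $t\ge1$ I would use a stationary supersolution. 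Choosing $\{q_k\}$ as in Theorem \ref{th3.1} and $k_0$ with $\lambda^D_1[q_{k_0}]\ge a$ (possible because $a<\lambda_1^{D_0}$) and repeating the computation (\ref{eq5.1.1.2}), one gets a constant $c_{k_0}$, independent of $p\ge1+\delta$, such that $w:=c_{k_0}\psi_{k_0}$ is a supersolution of (\ref{eq1.1}), hence — being $t$-independent — of (\ref{eq1.3}). By the Feynman–Kac representation (Proposition \ref{prop.ppt1}) and $bv_p^{p-1}\ge0$ one has $v_p(1,x)\le e^aP^D_1\varphi(x)\le e^a\|\varphi\|_\infty P^D_1\mathbf 1(x)$, and by (\ref{eq.iuc12u}) $P^D_1\mathbf 1(x)\le\beta_1e^{-\lambda_1^D}\big(\int_D\varphi_1^D\,dm\big)\varphi_1^D(x)$; since the semigroup of $-(\Delta^{\alpha})_{|D}+q_{k_0}$ is again intrinsically ultracontractive and $q_{k_0}$ is bounded, $\varphi_1^D$ and $\psi_{k_0}$ are comparable (as in \cite[Theorem 3.4]{DS}, already used in the proof of Theorem \ref{th5.1}), so after enlarging $c_{k_0}$ I may assume $v_p(1,\cdot)\le w$ for all $p\ge1+\delta$. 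A comparison principle for (\ref{eq1.3}) on $[1,\infty)$ — the parabolic analogue of Proposition \ref{prop3.3}, or the Itô–Tanaka–Meyer computation of Lemma \ref{lm7.1}, using $ay_1-by_1^p-(ay_2-by_2^p)\le a(y_1-y_2)$ for $y_1>y_2\ge0$ — then gives $v_p(t,\cdot)\le w$ for all $t\ge1$, so (\ref{eq7.1}) holds with $M:=\max\{e^a\|\varphi\|_\infty,\,c_{k_0}\|\psi_{k_0}\|_\infty\}$.

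For (\ref{eq7.2}) the matching stationary subsolution is $\underline w:=\varepsilon\varphi_1^D$, which (since $-(\Delta^{\alpha})_{|D}\varphi_1^D=\lambda_1^D\varphi_1^D$) is a subsolution of (\ref{eq1.1}) for \emph{all} $p\ge1+\delta$ as soon as $\|b\|_\infty(\varepsilon\|\varphi_1^D\|_\infty)^{\delta}\le a-\lambda_1^D$ (here $\lambda_1^D<a$ is used); fix such an $\varepsilon$. The proof then reduces to one \emph{uniform instantaneous positivity} claim: there exist $\sigma\in(0,t_0)$ and $\kappa>0$, both independent of $p\ge1+\delta$, with $v_p(\sigma,\cdot)\ge\kappa\varphi_1^D$ on $D$. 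Granting this, and replacing $\varepsilon$ by $\varepsilon\wedge\kappa$, the same comparison principle for (\ref{eq1.3}), applied \emph{from below} on $[\sigma,\infty)$, yields $v_p(t,\cdot)\ge\varepsilon\varphi_1^D$ for every $t\ge\sigma$, in particular for $t\ge t_0$; so (\ref{eq7.2}) follows with $c:=\varepsilon$. Note that one cannot start the lower comparison already at $t=0$, because $\varphi$ need not dominate any positive multiple of $\varphi_1^D$ — so the ``lifting'' by the diffusion over the short initial interval $[0,\sigma]$ is unavoidable, and getting it uniformly in $p$ is the whole point.

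The hard part will be this claim. The idea: $v_p$ is a supersolution of the linear equation $\frac{dz}{dt}-(\Delta^{\alpha})_{|D}z=-(bv_p^{p-1})z$, $z(0)=\varphi$, so by Feynman–Kac and $1-e^{-u}\le u$,
\[
v_p(\sigma,x)\ \ge\ \mathbb E_x\varphi(X^D_\sigma)\ -\ \|\varphi\|_\infty\,\mathbb E_x\!\int_0^{\sigma}bv_p^{p-1}(\sigma-r,X^D_r)\,dr .
\]
The first term equals $P^D_\sigma\varphi(x)\ge\alpha_\sigma e^{-\sigma\lambda_1^D}\langle\varphi_1^D,\varphi\rangle\,\varphi_1^D(x)$ by the lower IUC bound (\ref{eq.iuc12l}), with $\langle\varphi_1^D,\varphi\rangle>0$ because $\varphi$ is nontrivial. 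The second term must be bounded by $C_\sigma\varphi_1^D(x)$ with $C_\sigma\to0$ as $\sigma\downarrow0$, uniformly in $p$; the obstruction is that $bv_p^{p-1}$ blows up pointwise in $p$ (on $\{b>0\}\cap\{v_p>1\}$, e.g. near $\partial D_0$). To handle it I would use the a priori energy bound $\sup_{p\ge1+\delta}\int_0^{T}\!\int_D bv_p^{p+1}\,dm\,dt<\infty$ (take $\eta=v_p$ in (\ref{eq6.0}), as in the proof of Theorem \ref{th6.1}), which together with $b=0$ on $\overline D_0$ and $0\le b\le\|b\|_\infty$ gives a $p$-uniform space–time $L^1$ bound on $bv_p^{p-1}$; integrating this against $p_D(r,x,\cdot)$ and using the upper IUC bound (\ref{eq.iuc12u}) should give the desired $C_\sigma\varphi_1^D(x)$ estimate, after which one picks $\sigma<t_0$ so small that $\|\varphi\|_\infty C_\sigma<\tfrac12\alpha_\sigma e^{-\sigma\lambda_1^D}\langle\varphi_1^D,\varphi\rangle$ and takes $\kappa:=\tfrac12\alpha_\sigma e^{-\sigma\lambda_1^D}\langle\varphi_1^D,\varphi\rangle$. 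I expect the real work to be exactly this weighted estimate: one must keep the weight $\varphi_1^D$ on both sides throughout (rather than passing to $L^\infty$) so the constants stay uniform up to $\partial D$ and in $p$, and one must use that the bad set $\{v_p>1\}\cap\{b>0\}$ is small (and shrinks as $p\to\infty$) to turn the energy bound into weighted $L^1$-in-space control near $\partial D_0$.
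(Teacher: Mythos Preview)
Your approach to the upper bound \eqref{eq7.1} is correct and genuinely different from the paper's. The paper enlarges the domain to $D_\varepsilon\supset\overline D$ so that the stationary supersolution $c\psi_{k_0}$ (now built on $D_\varepsilon$) is bounded below by a positive constant on all of $\overline D$ and hence dominates $\varphi$ at $t=0$; the comparison then runs from $t=0$. You instead absorb the initial layer $[0,1]$ with the crude bound $v_p\le e^{a}\|\varphi\|_\infty$, use the upper IUC estimate at $t=1$ to get $v_p(1,\cdot)\le C\varphi_1^D$, invoke the comparability $\varphi_1^D\asymp\psi_{k_0}$ from \cite[Theorem~3.4]{DS}, and then compare on $[1,\infty)$ with the supersolution on $D$ itself. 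Both routes work; yours avoids introducing the auxiliary domain at the price of the IUC detour.

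For the lower bound \eqref{eq7.2}, however, there is a real gap, and the paper's argument supplies exactly the idea you are missing. Your plan hinges on showing
\[
\mathbb E_x\!\int_0^{\sigma}\! bv_p^{p-1}(\sigma-r,X^D_r)\,dr\ \le\ C_\sigma\,\varphi_1^D(x)\quad\text{with } C_\sigma\to0\ \text{uniformly in }p,
\]
using only the space--time $L^1$ bound on $bv_p^{p-1}$ coming from the energy identity. This does not go through: the IUC upper bound $p_D(r,x,y)\le\beta_r e^{-r\lambda_1^D}\varphi_1^D(x)\varphi_1^D(y)$ is useless near $r=0$ because $\beta_r\to\infty$, while on the short-time piece the heat kernel behaves like $r^{-d/2\alpha}$, against which a mere $L^1$ bound on $bv_p^{p-1}$ gives no control. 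Nor is there any reason the $L^1$ mass of $bv_p^{p-1}$ on $[0,\sigma]\times D$ should tend to $0$ uniformly in $p$ as $\sigma\downarrow0$.

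The paper sidesteps all of this by one comparison: since $b\le\|b\|_\infty$, the solution $w_p$ of the \emph{nondegenerate} problem $\frac{dw_p}{dt}-(\Delta^{\alpha})_{|D}w_p=aw_p-\|b\|_\infty w_p^{p}$ with $w_p(0)=\varphi$ satisfies $w_p\le v_p$. Lemma~\ref{lm7.1} now applies to $w_p$ (the coefficient is bounded below by $\|b\|_\infty>0$), so the effective potential $h:=\|b\|_\infty w_p^{p-1}$ is bounded in $L^\infty$ uniformly in $p\ge 1+\delta$, not just in $L^1$. Feynman--Kac with a \emph{bounded} potential then gives $w_p(t_0,x)\ge e^{(a-\|h\|_\infty)t_0}P^D_{t_0}\varphi(x)$, and a single application of the lower IUC bound \eqref{eq.iuc12l} yields $w_p(t_0,\cdot)\ge c_{t_0}\varphi_1^D$ with $c_{t_0}$ independent of $p$. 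From there your subsolution $\varepsilon\varphi_1^D$ and comparison on $[t_0,\infty)$ finish the job exactly as you outlined. The moral: do not try to control $bv_p^{p-1}$ directly; first pass to the auxiliary nondegenerate equation, where the potential is automatically bounded.
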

\begin{proof}
For $\varepsilon>0$, we set $D_\varepsilon=\{x\in \BR^d: \mbox{dist}\{x,D\}<\varepsilon\}$.
Let $\{q_k\}$ be a sequence of bounded positive functions on $D_\varepsilon$ such that
supp$[q_k]\subset D_\varepsilon\setminus \overline D_0$ and for every compact $K\subset D_\varepsilon\setminus \overline D_0$, $\inf_{x\in K}q_k(x)\nearrow \infty$. Let  $\psi_k$ be the principal eigenfunction
for $-(\Delta^{\alpha})_{|D_\varepsilon}+q_k$. As in the proof of Theorem \ref{th5.1} we show   that for fixed $\delta>0$ there exist  $k_0\in\mathbb N$ and $c_{k_0}>0$ such that $\lambda^D_1[q_k]\ge a,\, k\ge k_0$, $c\psi_{k_0}\ge\varphi^{D_\varepsilon}_1$ and $c\psi_{k_0}$ is a supersolution to (\ref{eq1.1}) on $D_\varepsilon$ for $c\ge c_{k_0},\, p\ge 1+\delta$. More precisely, there exists a positive bounded function $h$ on $D_\varepsilon$ such that
\[
-(\Delta^{\alpha})_{|D_\varepsilon}(c\psi_{k_0})=a(c\psi_{k_0})-b(c\psi_{k_0})^p+h
\]
(see the reasoning following (\ref{eq5.1.1.2})).
Of course, since $\psi_{k_0}$ is independent of $t$, we have
\[
\frac{d(c\psi_{k_0})}{dt}-(\Delta^{\alpha})_{|D_\varepsilon}(c\psi_{k_0})=a(c\psi_{k_0})-b(c\psi_{k_0})^p+h.
\]
Let $c$ be chosen so that  $c\psi_{k_0}\ge 1$ on $D$.  By Proposition \ref{prop.ppt1} and the Tanaka-Meyer formula, for every $t\in[0,T-s]$,
\begin{align*}
&\mathbb E_{s,x}(v^{(T)}_p(\mathscr X^D_t)-c\psi_{k_0}(\mathscr X^{D_\varepsilon}_t))^+\\
&\quad\le \mathbb E_{s,x}(\varphi(X^D_T)-c\psi_{k_0}(X^{D_\varepsilon}_T))^+\\
&\qquad+a\mathbb E_{s,x}\int_t^{T-s}\mathbf{1}_{\{v^{(T)}_p(\mathscr X^D_r)>c\psi_{k_0}(\mathscr X^{D_\varepsilon}_r)\}}(v^{(T)}_p(\mathscr X^D_r)-c\psi_{k_0}
(\mathscr X^{D_\varepsilon}_r))\,dr\\
&\qquad
-\mathbb E_{s,x}\int_t^{T-s}\mathbf{1}_{\{v^{(T)}_p(\mathscr X^D_r)>c\psi_{k_0}(\mathscr X^{D_\varepsilon}_r)\}}b((v^{(T)}_p)^p(\mathscr X^D_r)-(c\psi_{k_0})^p(\mathscr X^{D_\varepsilon}_r))\,dr\\
&\qquad- \mathbb E_{s,x}\int_t^{T-s}\mathbf{1}_{\{v^{(T)}_p(\mathscr X^D_r)>c\psi_{k_0}
(\mathscr X^{D_\varepsilon}_r)\}}h(\mathscr X^{D_\varepsilon}_r)\,dr
\\& \quad
\le  \mathbb E_{s,x}(\varphi(X^D_T)-c\psi_{k_0}(X^{D_\varepsilon}_T))^+
+a\mathbb E_{s,x}\int_t^{T-s}(v^{(T)}_p(\mathscr X^D_r)-c\psi_{k_0}(\mathscr X^{D_\varepsilon}_r))^+\,dr.
\end{align*}
Applying Gronwall's lemma gives
\[
\mathbb E_{s,x}(v^{(T)}_p(\mathscr X^D_t)-c\psi_{k_0}(\mathscr X^{D_\varepsilon}_t))^+\le e^{aT} \mathbb E_{s,x}(\varphi(X^D_T)-c\psi_{k_0}(X^{D_\varepsilon}_T))^+,\quad t\in [0,T-s].
\]
Observe that
\begin{align*}
 \mathbb E_{s,x}(\varphi(X^D_T)-c\psi_{k_0}(X^{D_\varepsilon}_T))^+&=
\mathbb E_{s,x}\mathbf{1}_{\{\varphi(X^D_T)>0\}}(\varphi(X^D_T)-c\psi_{k_0}(X^{D_\varepsilon}_T))^+\\&=
\mathbb E_{s,x}\mathbf{1}_{\{\varphi(X^D_T)>0\}}(\varphi(X_T)-c\psi_{k_0}(X_T))^+=0.
\end{align*}
The last equality follows from the fact that $c\psi_{k_0}\ge 1$ on $D$. Consequently,
\[
\mathbb E_{s,x}(v^{(T)}_p(\mathscr X^D_t)-c\psi_{k_0}(\mathscr X^D_t))^+=0,\quad t\in [0,T-s].
\]
Taking $t=0$, we get (\ref{eq7.1}). Now, let $w_p$ be a weak solution to the Cauchy problem
\[
\frac{d w_p}{d t}-(\Delta^{\alpha})_{|D}w_p=aw_p-\|b\|_\infty w_p^p,\quad w_p(0)=\varphi.
\]
By \cite[Corollary 5.9]{K:JFA}, $w_p\le v_p$. By Lemma \ref{lm7.1}, $h:=\|b\|_\infty w^{p-1}_p$ is bounded by a constant independent of $p$. Observe that
\[
\frac{d w_p}{d t}-(\Delta^{\alpha})_{|D}w_p+hw_p=aw_p,\quad w_p(0)=\varphi.
\]
By \cite[Corollary 5.9]{K:JFA}, $w_p\ge \bar w_p$, where $\bar w_p$ is a weak solution to the problem
\[
\frac{d\bar w_p}{d t}-(\Delta^{\alpha})_{|D}\bar w_p+\|h\|_\infty \bar w_p=a\bar w_p,\quad \bar w_p(0)=\varphi.
\]
By the Feynman-Kac formula,
\[
\bar w_p(t,x)=\int_De^{(a-\|h\|_\infty)t}p_D(t,x,y)\,\varphi(y)\,dy.
\]
By the ultracontractivity of $(P^D_t)$ (cf. \eqref{eq.iuc12l})
\[
\bar w_p(t_0,x)\ge \beta_{t_0} e^{-t_0\lambda_1^D}e^{(a-\|h\|_\infty)t_0} \varphi^D_1(x)\int_D\varphi^D_1(y)\varphi(y)\,dy=c_{t_0}   \varphi^D_1(x),
\]
with some $c_{t_0}>0$. Now, observe that for any $c>0$,
\[
\frac{d (c\varphi^D_1)}{d t}-(\Delta^{\alpha})_{|D}(c\varphi^D_1)=a(c\varphi^D_1)-\|b\|_\infty(c\varphi^D_1)^p+
\{\|b\|_\infty(c\varphi^D_1)^p+(\lambda_1^D-a)c\varphi^D_1\}.
\]
For $c\le (a-\lambda_1^D)^{p-1}/(\|b\|_\infty\|\varphi^D_1\|_\infty)$,
the last  term in braces on the right-hand side of the above equation  is less than or equal to zero.
Therefore, by \cite[Corollary 5.9]{K:JFA}, for such $c>0$ we have $w_p(t)\ge (c\wedge c_{t_0})\varphi^D_1$, $t\ge t_0$.
This completes the proof since $v_p\ge w_p$ as shown above.
\end{proof}

The  following simple lemma appears to  be very useful in the proofs of the large time asymptotics of solutions to \eqref{eq1.3}, \eqref{eq1.4} (see e.g.  \cite{Zheng} for the similar technique).

\begin{lemma}
\label{lm.vulem}
Let  $T,a>0$, and $c\in (0,T)$. Assume that $y:(0,\infty)\to [0,\infty)$, $\int_0^\infty y(r)\,dr<\infty$, and  
\begin{equation}
\label{eq.vul1}
y(t)-y(s)\le a\int_s^ty(r)\,dr,\quad 0< s\le t<T.
\end{equation} 
Then 
\begin{equation}
\label{eq.vul3}
y(t)\le e^{\int_0^Ty(r)\,dr}\big(c^{-1}\int_0^Ty(r)\,dr+a^2c\big),\quad t\in [c,T)\,\,\, a.e.
\end{equation} 
Moreover, if $\int_0^\infty y(r)\,dr<\infty$, and \eqref{eq.vul1} holds with $T=\infty$, then 
$\lim_{t\to \infty}y(t)=0$.
\end{lemma}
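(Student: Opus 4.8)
The plan is to derive the bound \eqref{eq.vul3} by comparing $y(t)$ against its average over a subinterval, then to pass to the limit to obtain the decay statement. First I would fix $t\in[c,T)$ at which \eqref{eq.vul1} can be read as a genuine inequality (the a.e. caveat is there only because $y$ need not be continuous — it arises as a limit of energy-type quantities), and I would pick a point $s\in(0,c)$ to be chosen later as an averaging variable. From \eqref{eq.vul1} we get, for every such $s\le t$,
\[
y(t)\le y(s)+a\int_s^t y(r)\,dr\le y(s)+a\int_0^T y(r)\,dr.
\]
Averaging this inequality in $s$ over the interval $(0,c)$ and using $\frac1c\int_0^c y(s)\,ds\le \frac1c\int_0^T y(r)\,dr$ gives a first crude bound $y(t)\le c^{-1}\int_0^T y(r)\,dr + a\int_0^T y(r)\,dr$; but to land exactly on the stated right-hand side one wants to iterate once. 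The cleaner route is: set $Y:=\int_0^T y(r)\,dr$ and first prove the pointwise Gronwall-type estimate $y(t)\le y(s)e^{a(t-s)}$ for a.e. $s\le t$ in $(0,T)$, which follows from \eqref{eq.vul1} by the standard differential-inequality argument applied to $z(t):=y(t)e^{-at}$ (one checks $z(t)-z(s)\le 0$ does not quite hold, so instead one uses the integral form directly: \eqref{eq.vul1} says $g(t):=y(t)-a\int_0^t y$ is non-increasing, hence $y(t)\le g(s)+a\int_0^t y\le y(s)+a\int_s^t y$, and then Gronwall in integral form yields $y(t)\le y(s)e^{a(t-s)}\le y(s)e^{aT}$).

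Next I would average the inequality $y(t)\le y(s)e^{aT}$... — actually the factor $e^{Y}$ rather than $e^{aT}$ in \eqref{eq.vul3} suggests the intended argument uses \eqref{eq.vul1} with the bound $\int_s^t y(r)\,dr\le \int_0^T y(r)\,dr = Y$ in the exponent via the elementary estimate $1+x\le e^x$, iterated. Concretely, from $g$ non-increasing we have $y(t)=g(t)+a\int_0^t y \le g(s)+a\int_0^T y = y(s)-a\int_0^s y + aY \le y(s)+aY$ for $s\le t$. Averaging over $s\in(0,c)$:
\[
y(t)\le \frac1c\int_0^c y(s)\,ds + aY \le \frac{Y}{c}+aY.
\]
To sharpen $aY$ to $a^2c\,e^{Y}$ one feeds this bound back: on $(0,c)$ one does not have $t\ge c$, so instead estimate $\int_0^c y(s)\,ds$ by splitting and using \eqref{eq.vul1} on $(s,c)$; alternatively, and this is the route I would actually take, I would apply \eqref{eq.vul1} on $[c,t]$ to get $y(t)\le y(c)+a\int_c^t y\le y(c)+aY$, and bound $y(c)$: for a.e. $c'\in(0,c)$, $y(c)\le y(c')e^{a(c-c')}\le y(c')e^{aY}$ (using $a(c-c')\le ac\le$ ... here one needs $ac\le Y$, which may fail; so one keeps $e^{ac}$ and notes $ac\le a^2c/a$...). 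The precise bookkeeping to hit the exact constant $c^{-1}Y+a^2c$ times $e^Y$ is the kind of routine optimization I would carry out carefully in the write-up; the structure is: average a Gronwall bound over the free endpoint to trade an unknown boundary value for $c^{-1}Y$, and control the remaining $\int y$ by $Y$.

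Finally, for the decay statement: assume $Y:=\int_0^\infty y(r)\,dr<\infty$ and \eqref{eq.vul1} holds for all $0<s\le t<\infty$. Given $\varepsilon>0$, choose $R$ so large that $\int_R^\infty y(r)\,dr<\varepsilon$. Apply the estimate \eqref{eq.vul3} (or rather its derivation) on the time window $(R,\infty)$ with the same $c$: for a.e. $t\ge R+c$ we get $y(t)\le e^{\varepsilon}\big(c^{-1}\varepsilon + a^2 c\big)$. Now first send $c\to\infty$? No — $c$ is bounded by the window; instead, for fixed small $c$, let $R\to\infty$ so that $\varepsilon\to0$, giving $\limsup_{t\to\infty} y(t)\le e^0\cdot a^2 c = a^2 c$ (for a.e. $t$, but then for all $t$ by the a.e.-sup being the essential limsup, and the monotone structure of $g$ forces the genuine limit to exist and agree). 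Then let $c\to0^+$ to conclude $\limsup_{t\to\infty} y(t)=0$, hence $\lim_{t\to\infty} y(t)=0$. The main obstacle I anticipate is purely bookkeeping: getting the constant in \eqref{eq.vul3} exactly right (the averaging-endpoint trick produces $c^{-1}\int y$ naturally, but isolating the $a^2 c$ term rather than a worse $aY$ requires one extra use of $\int_s^t y\le \int y$ inside an already-established crude bound), together with the mild technical point of handling the "a.e.\ $t$'' qualifier when passing to the genuine limit — which is resolved by observing that $g(t)=y(t)-a\int_0^t y$ is monotone, so $y$ has one-sided limits everywhere and the a.e.\ bound upgrades to an everywhere $\limsup$ bound.
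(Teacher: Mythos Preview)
Your plan for the decay statement is sound and actually cleaner than the paper's: once \eqref{eq.vul3} is in hand, applying it on a late window $(R,\infty)$ with tail mass $Y_R<\varepsilon$, then sending $R\to\infty$ and finally $c\to 0^+$, gives $\limsup_{t\to\infty}y(t)=0$. (The paper instead argues the decay directly by contradiction: if $\limsup y=\beta>0$, the inequality \eqref{eq.vul1} forces $y\ge \beta/4$ on infinitely many disjoint intervals of fixed length, contradicting $\int_0^\infty y<\infty$. Both routes are fine, but note the paper's route is independent of \eqref{eq.vul3}.)

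The genuine gap is in your derivation of \eqref{eq.vul3}. Your averaging argument correctly yields $y(t)\le c^{-1}Y+aY$ (with $Y=\int_0^T y$), but this is \emph{not} the stated bound $(c^{-1}Y+a^2c)\,e^{Y}$, and the discrepancy is not ``routine bookkeeping''. The structural point you are missing is the splitting
\[
ay\;\le\; y^{2}+a^{2},
\]
which turns the (mollified) differential inequality $y'\le ay$ into $y'\le y\cdot y + a^{2}$. One can then apply the \emph{uniform Gronwall lemma} with $g=y$ and $h=a^{2}$: for $t\ge c$,
\[
y(t)\;\le\;\Big(\tfrac{1}{c}\!\int_{t-c}^{t}\!y+\int_{t-c}^{t}\!a^{2}\Big)\exp\!\Big(\int_{t-c}^{t}\!y\Big)\;\le\;\big(c^{-1}Y+a^{2}c\big)e^{Y}.
\]
This is exactly the mechanism that produces both the additive $a^{2}c$ (from $\int h$) and the exponent $e^{Y}$ (from $\int g=\int y$, which is \emph{finite}, unlike $\int a=aT$). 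The paper carries this out by first mollifying $y$ so that a genuine derivative exists, writing $\hat y_\varepsilon'\le a\hat y_\varepsilon\le \hat y_\varepsilon^{2}+a^{2}$, invoking \cite[Lemma~1.1]{Zheng} (the uniform Gronwall estimate), and then letting $\varepsilon\to0$; this also explains the ``a.e.'' qualifier. Your attempts to iterate or to feed back the crude bound cannot manufacture the $a^{2}c$ term without this quadratic splitting, and your Gronwall bound $y(t)\le y(s)e^{a(t-s)}$ puts $e^{aT}$, not $e^{Y}$, in the exponent --- precisely the issue you flagged but did not resolve.
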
 
\begin{proof}
We first prove the second assertion. By \eqref{eq.vul1} and integrability assumption on $y$, we have that $c_\delta:=\sup_{t\ge \delta}y(t)<\infty$ for any $\delta>0$.
Suppose that $\limsup_{t\to \infty}y(t)=\beta>0$. Set $T:= \frac{\beta}{4ac}$. Then there exists an increasing sequence $\{t_n\}$ ($t_1\ge \delta$)
such that $t_{n+1}-t_n\ge 2T$, and $y(t_n)\ge \beta/2,\, n\ge 1$. Consequently, for any $t\in [t_{n+1}-T,t_{n+1}]$,
\[
\beta/2-y(t)\le y(t_{n+1})-y(t)\le a\int_t^{t_{n+1}}y(r)\,dr\le ac(t_{n+1}-t)\le acT=\beta/4.
\]
This in turn implies that
\[
\beta/4\le y(t),\quad t\in [t_{n+1}-T,t_{n+1}],\, n\ge 1,
\]
which  contradicts to integrability of $y$ over $(0,\infty)$.

As to the second assertion, we let $j:\BR\to\BR$ be a positive smooth function with compact support in $B(0,1)$
such that $j(0)=1$, and $\int_{\BR}j(t)\,dt=1$. Set $j_\varepsilon(t):=\frac{1}{\varepsilon}j(t/\varepsilon),\, t\in\BR$,
and $\hat y_\varepsilon(t):=y_\varepsilon(t+\varepsilon)$, with
\[
y_\varepsilon(t):= \int_\BR j_\varepsilon(t-s)y(s)\,ds,\quad t\in (\varepsilon,T-\varepsilon).
\]
Using this notation and \eqref{eq.vul1}, we find that 
\[
\hat y_\varepsilon(t)-\hat y_\varepsilon(s)\le a\int_s^t\hat y_\varepsilon (r)\,dr,\quad 0< s\le t<T-\varepsilon,
\]
and hence in turn that 
\[
\frac{d \hat y_\varepsilon }{d t}(t)\le a \hat y_\varepsilon(t)\le  \hat y^2_\varepsilon(t)+a^2,\quad t\in (0,T-\varepsilon).
\]
By \cite[Lemma 1.1]{Zheng},
\[
\hat y^2_\varepsilon(t)\le e^{\int_0^{T-\varepsilon}\hat y_\varepsilon(r)\,dr}\big(c^{-1}\int_0^{T-\varepsilon}\hat y^2_\varepsilon(r)\,dr+a^2c\big),\quad t\in (c,T-\varepsilon).
\]
Equivalently,
\[
y^2_\varepsilon(t)\le e^{\int_\varepsilon^{T} y_\varepsilon(r)\,dr}\big(c^{-1}\int_\varepsilon^{T} y_\varepsilon(r)\,dr+a^2c\big),\quad t\in (c+\varepsilon,T).
\]
Now, letting $\varepsilon\searrow 0$, we easily get the desired result.
\end{proof}

\begin{proposition}\label{prop8.5}
Let $v_p$ be a weak solution to {\rm{(\ref{eq1.3})}} and  $u_p$ be a weak solution to {\rm{(\ref{eq1.1})}}. Then
\begin{equation}
\label{eq8.3}
\|v_p(t)-u_p\|_\infty +\|v_p(t)-u_p\|_{H^{\alpha}(D)}\rightarrow 0\quad \mbox{as }t\rightarrow \infty.
\end{equation}
\end{proposition}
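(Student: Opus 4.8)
The plan is to establish first that $v_p(t)$ converges to $u_p$ in $L^2(D;m)$ and in the energy norm $\|\cdot\|_{H^\alpha(D)}$ as $t\to\infty$, and then to upgrade the $L^2$-convergence to uniform convergence using the Feynman-Kac representation and intrinsic ultracontractivity. For the energy-space convergence, I would work with the difference $w(t):=v_p(t)-u_p$. Since $u_p$ is a (stationary) weak solution of \eqref{eq1.1} and $v_p$ is a weak solution of \eqref{eq1.3}, $w$ solves a parabolic equation with right-hand side $aw - b\big(v_p^p - u_p^p\big)$ and initial datum $\varphi-u_p$. Testing with $w(t)$ and using $(v_p^p-u_p^p)(v_p-u_p)\ge 0$ (monotonicity of $y\mapsto y^p$), together with the Poincaré-type inequality $\EE_D(w,w)\ge c_1\lambda_1^D\|w\|_{L^2(D;m)}^2$ and the Rayleigh quotient, one gets a differential inequality of the form $\frac{d}{dt}\|w(t)\|_{L^2(D;m)}^2 + 2\EE_D(w(t),w(t)) \le 2a\|w(t)\|_{L^2(D;m)}^2$. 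Since $a<\lambda_1^{D_0}$ is not by itself enough, the key point is that the \emph{effective} linear operator seen by $w$ is $-\Delta^\alpha + b(v_p^{p-1}\vee u_p^{p-1})$-type, whose principal eigenvalue exceeds $a$ by Lemma~\ref{lm3.1} and Theorem~\ref{th3.2} (recall $a=\lambda_1^D[bu_p^{p-1}]$). Making this precise — e.g. by comparing with the linearization at $u_p$ and invoking the spectral gap $\lambda_1^D[bu_p^{p-1}]=a$ — yields $\int_0^\infty \|w(t)\|_{L^2(D;m)}^2\,dt<\infty$ and hence, by the energy identity, $\int_0^\infty \EE_D(w(t),w(t))\,dt<\infty$.

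Next I would invoke Lemma~\ref{lm.vulem}. Set $y(t):=\|w(t)\|_{L^2(D;m)}^2$ (or, better, $y(t):=\EE_D(w(t),w(t))$, using that $v_p,u_p$ are bounded uniformly by Lemma~\ref{lm7.2} and that $t\mapsto\EE_D(v_p(t),v_p(t))$ has the right regularity by Theorem~\ref{th6.impar5}). The monotonicity identity obtained above shows $y$ satisfies \eqref{eq.vul1} with $T=\infty$, and the integrability $\int_0^\infty y(r)\,dr<\infty$ was just established; Lemma~\ref{lm.vulem} then gives $y(t)\to 0$, i.e. $\|v_p(t)-u_p\|_{H^\alpha(D)}\to 0$. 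In particular $\|v_p(t)-u_p\|_{L^2(D;m)}\to 0$, and combined with the uniform $L^\infty$-bounds from Lemma~\ref{lm7.2} and the compact embedding \eqref{eq2.1.equiv15r}, also $\|v_p(t)-u_p\|_{L^q(D;m)}\to 0$ for every $q\in[1,2d/(d-2\alpha))$.

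For the uniform convergence, I would repeat the Itô/Feynman-Kac argument used in the proof of Theorem~\ref{th5.1}(ii) and Theorem~\ref{th6.1}, now comparing $v_p(t+\cdot)$ with $u_p$ over a fixed small window. Writing $v_p^{(T)}(\mathscr X^D)$ and $u_p(X^D)$ via Proposition~\ref{prop.ppt1} and the stationary Feynman-Kac formula for $u_p$, one obtains a backward equation for the difference with right-hand side controlled by $a|w| + \|b\|_\infty|w|\,(v_p^{p-1}+u_p^{p-1})$, which by the uniform bounds of Lemma~\ref{lm7.2} is $\le C|w|$. Applying Itô's formula to $|w|^2$, estimating the Green-function term with \eqref{eq.estgfu2} to get $C\|w(t)\|_{L^q(D;m)}$, and splitting the additive-functional term at a small time $h>0$ using intrinsic ultracontractivity \eqref{eq.iuc12u} exactly as in \eqref{eq.ptg2}--\eqref{eq.ptg4}, one concludes $\|v_p(t)-u_p\|_\infty \le \varepsilon(h) + C_h\|v_p(t)-u_p\|_{L^q(D;m)}$, where $\varepsilon(h)\to 0$ as $h\searrow 0$ by the Feller property of $(P^D_t)$. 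Letting first $t\to\infty$ and then $h\searrow 0$ gives \eqref{eq8.3}.

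The main obstacle is the first step: extracting the integrability $\int_0^\infty\|v_p(t)-u_p\|^2\,dt<\infty$ rather than mere boundedness. The naive energy estimate only controls $\EE_D(w,w)$ by $a\|w\|^2$, and since $a>\lambda_1^D$ this gives no decay. One genuinely needs to exploit the nonlinear absorption: the term $-\int_D b(v_p^p-u_p^p)(v_p-u_p)\,dm$ is not merely nonnegative but, on the set where $v_p$ stays bounded below (guaranteed for $t\ge t_0$ by \eqref{eq7.2}), it behaves like $-p\int_D b\,\xi^{p-1}w^2\,dm$ for $\xi$ between $u_p$ and $v_p$, and combined with $a=\lambda_1^D[bu_p^{p-1}]$ and a perturbation/monotonicity argument (Lemma~\ref{lm3.1}) this closes the gap. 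Handling the transient initial layer $t\in(0,t_0)$ separately (where only $\|w(t)\|_{L^2}$ boundedness is available from \eqref{eq6.1}) and then running the argument from $t_0$ onward is the careful bookkeeping that makes this work.
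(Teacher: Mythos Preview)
Your proposal has a genuine gap in the first step, precisely where you flag the difficulty yourself. The spectral argument you sketch does not close: from $a=\lambda_1^D[bu_p^{p-1}]$ and Lemma~\ref{lm3.1} you do get $\lambda_1^D[pbu_p^{p-1}]>a$, but the effective potential in the energy identity for $w=v_p-u_p$ is $pb\xi^{p-1}$ with $\xi(t,\cdot)$ between $v_p(t,\cdot)$ and $u_p$. To obtain a uniform coercivity margin you would need $\lambda_1^D[pb\xi(t)^{p-1}]\ge a+\delta$ for all $t\ge t_0$, and the only pointwise lower bound available a priori is $\xi(t,\cdot)\ge c\varphi_1^D$ from Lemma~\ref{lm7.2}. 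This gives no comparison of $pb\xi^{p-1}$ with $bu_p^{p-1}$ (the constant $c$ may be tiny), so the inequality $\int_0^\infty\|w(t)\|_{L^2}^2\,dt<\infty$ is not established. Any linearization-at-$u_p$ argument is local and presupposes that $v_p(t)$ is already close to $u_p$, which is circular.

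The paper avoids this entirely by working with $y_p(t)=\|\tfrac{dv_p}{dt}(t)\|_{L^2}^2$ rather than $\|v_p(t)-u_p\|_{L^2}^2$. Differentiating the equation in $t$ (via mollification) and testing with $\tfrac{dv_p}{dt}$ yields $y_p(t_2)-y_p(t_1)\le 2a\int_{t_1}^{t_2}y_p$, while the integrability $\int_\varepsilon^\infty y_p<\infty$ is read off directly from the Lyapunov-type identity \eqref{eq6.3} together with the uniform bound of Lemma~\ref{lm7.2}; no spectral gap is needed. Lemma~\ref{lm.vulem} then gives $y_p(t)\to 0$. Combined with $\sup_{t\ge t_0}\EE_D(v_p(t),v_p(t))<\infty$ (again from \eqref{eq6.3}), one extracts a subsequence $t_n\to\infty$ with $v_p(t_n)\to w_p$ weakly in $H^\alpha_0(D)$, passes to the limit in \eqref{eq6.0a} using $\tfrac{dv_p}{dt}(t_n)\to 0$, and identifies $w_p=u_p$ via Proposition~\ref{prop3.2}. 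The $H^\alpha$-convergence then follows by testing the difference of \eqref{eq6.0a} and \eqref{eq5.0} with $v_p(t)-u_p$.

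For the uniform part your outline is in the right spirit but slightly off: there is no reaction measure $A^\mu$ here, and the nonlinear term $b(v_p^p-u_p^p)$ has the sign of $v_p-u_p$, so in the Tanaka--Meyer estimate it simply drops out. The paper obtains $|v_p(T,x)-u_p(x)|\le \mathbb E_{0,x}|\varphi-u_p|(X^D_T)+a\,\mathbb E_{0,x}\!\int_0^T|v_p-u_p|(T-r,X^D_r)\,dr$, bounds the first term by $cT^{-d/2\alpha}$ via \eqref{eq.tg.1}, and splits the time integral at $S<T$: the tail $[S,T]$ is controlled by intrinsic ultracontractivity \eqref{eq.iuc12u} (giving $e^{-\lambda_1^D S}$), and the head $[0,S]$ by \eqref{eq.estgfu2} times $\sup_{T-S\le r\le T}\|v_p(r)-u_p\|_{L^{q^*}}$, which tends to zero by the $L^q$-convergence already proved.
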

\begin{proof}
Let $j:\BR\to\BR$ be as in the proof of Lemma \ref{lm.vulem}.
For any $u\in L^1_{loc}(D_T)$ (extended by zero on $D_T^c$) we denote
\[
u^{(\varepsilon)}(t):=\int_{\BR}j_\varepsilon(t-s)u(s)\,ds.
\]
Observe that (cf. Theorem \ref{th6.impar5})
\[
\frac{d^2 v^{(\varepsilon)}_p}{d t^2}-(\Delta^{\alpha})_{|D}\frac{d v^{(\varepsilon)}_p}{d t}=a \frac{d v^{(\varepsilon)}_p}{d t}-pb\big[v_p^{p-1}\frac{d v_p}{d t}\big]^{(\varepsilon)}.
\]
Multiplying the above equation by $\frac{d v^{(\varepsilon)}_p}{d t}$ and integrating over $D$ we find
\begin{align*}
&\frac12\frac{d}{dt}\Big\|\frac{d v^{(\varepsilon)}_p}{d t}(t)\Big\|^2_{L^2(D;m)}+\EE_D(\frac{d v^{(\varepsilon)}_p}{d t}(t),\frac{d v^{(\varepsilon)}_p}
{d t}(t))\\
&\qquad +\int_Db\big[v^{p-1}(t)\frac{d v_p}{d t}\big]^{(\varepsilon)}(t)\frac{d v^{(\varepsilon)}_p}{d t}(t)\, dm
=a\Big\|\frac{d v^{(\varepsilon)}_p}{d t}(t)\Big\|^2_{L^2(D;m)},\quad t\in [\varepsilon,T-\varepsilon].
\end{align*}
Integrating over $[t_1,t_2]\subset [\varepsilon,T-\varepsilon]$, and letting $\varepsilon\searrow 0$ yields 
\begin{align*}
\Big\|\frac{d v_p}{d t}(t_2)\Big\|^2_{L^2(D;m)}-\Big\|\frac{d v_p}{d t}(t_1)\Big\|^2_{L^2(D;m)}
\le 2a\int_{t_1}^{t_2}\Big\|\frac{d v_p}{d t}(t)\Big\|^2_{L^2(D;m)}\,dt.
\end{align*}
Write $y_p(t)=\|\frac{d v_p}{d t}(t)\|^2_{L^2(D;m)} $. Then
\begin{equation}
\label{eq7.3}
y_p(t_2)-y_p(t_1)\le 2a\int_{t_1}^{t_2}y_p(t)\,dt,\quad t_2\ge t_1>0.
\end{equation}
Since $v_p(t)\in H^{\alpha}_0(D)$ for $t>0$, we conclude from  (\ref{eq6.3}) and Lemma \ref{lm7.2} that  $\int_\varepsilon^\infty y_p(t)\,dt<\infty$ for any $\varepsilon>0$. Therefore, by  Lemma \ref{lm.vulem}, $y_p(t)\rightarrow 0$.  By
(\ref{eq6.3}) and Lemma \ref{lm7.2} again,
$\sup_{t\ge t_0}\EE_D(v_p(t),v_p(t))<\infty$ for every $t_0>0$. As a result,  there exists a sequence $\{t_n\}\subset \BR^+$ such that $t_n\rightarrow \infty$ and  $\{v_p(t_n)\}$
is convergent as $n\rightarrow\infty$ to some $w_p\in H^{\alpha}_0(D)$ weakly in $H^{\alpha}(D)$ and strongly in $L^q(D;m)$ for any $q\ge 1$.
Since $v_p$ is a weak solution to (\ref{eq1.3}), we have (cf. Remark \ref{rem.th6.impar5})
 \[
 (\frac{d v_p}{d t}(t_n),\eta)+\EE_D(v_p(t_n),\eta)=a\int_Dv_p(t_n)\eta\,dm-\int_D bv_p^p(t_n)\eta\,dm
 \]
 for any $\eta\in H^{\alpha}_0(D)$. Letting $n\rightarrow \infty$ we obtain
 \[
\EE_D(w_p,\eta)=a\int_Dw_p\eta\,dm-\int_Db w_p^p\eta\,dm
 \]
for any $\eta\in H^{\alpha}_0(D)$. By Lemma \ref{lm7.2}, $w_p$ is strictly positive, and so by Proposition \ref{prop3.2}, $w_p=u_p$. By the  uniqueness argument, the convergence of $\{v_p(t_n)\}$ can be strengthened to the convergence of $v_p(t)$ as $t\rightarrow \infty$. Now, subtracting  (\ref{eq6.0a}) from (\ref{eq5.0}), and  then taking $\eta=v_p(t)-u_p$
as a test function, and using the  already proven convergences of $\{v_p(t)\}$ as $t\rightarrow\infty$, we deduce   that $v_p(t)\rightarrow u_p$ in $H^{\alpha}(D)$ as $t\rightarrow\infty$.

To prove the uniform convergence  in (\ref{eq8.3}), we first observe that
\[
\frac{d (v_p-u_p)}{d t}-(\Delta^{\alpha})_{|D}(v_p-u_p)=a(v_p-u_p)-(bv_p^p-bu_p^p),\quad (v_p-u_p)(0)=\varphi-u_p.
\]
Consequently, by Proposition \ref{prop.ppt1} and the Tanaka-Meyer formula, for every $x\in D$,
\[
|v_p^{(T)}(0,x)-u_p^{(T)}(0,x)|\le \mathbb E_{0,x}|\varphi(X^D_T)-u_p(X^D_T)|+a\mathbb E_{0,x}\int_0^T|v^{(T)}_p-u^{(T)}_p|(\mathscr X^D_r)\,dr.
\]
Equivalently,
\[
|v_p(T,x)-u_p(x)|\le \mathbb E_{0,x}|\varphi(X^D_T)-u_p(X^D_T)|+a\mathbb E_{0,x}\int_0^T|v_p-u_p|(T-r, X^D_r)\,dr
\]
for every $x\in D$. By (\ref{eq.tg.1}) and \eqref{eq7.1},
\[
E_{0,x}|\varphi(X^D_T)-u_p(X^D_T)|\le \hat cMT^{-d/2\alpha},\quad x\in D.
\]
Let $0< S<T$. We have
\begin{align*}
\mathbb E_{0,x}\int_0^T|v_p-u_p|(T-r, X^D_r)\,dr&=\int_D\int_0^S|v_p-u_p|(T-r,y)p_D(r,x,y)\,dy\\
&\quad+\int_D\int_S^T|v_p-u_p|(T-r,y)p_D(r,x,y)\,dy.
\end{align*}
By    (\ref{eq7.1}) and ultracontractivity of $(P^D_t)_{t\ge0}$ (cf. \eqref{eq.iuc12u}),
\[
\int_D\int_S^T|v_p-u_p|(T-r,y)p_D(r,x,y)\,dy\le \beta_Sm(D)\frac{2M\|\varphi_1^D\|^2_\infty}{\lambda_1^D}e^{-\lambda_1^DS}.
\]
By (\ref{eq.estgfu2}), for any $q\in [1,d/(d-2\alpha))$
\[
\int_D\int_0^S|v_p-u_p|(T-r,y)p_D(r,x,y)\,dy\le \sup_{x\in D}\|G(x,\cdot)\|_{L^q(D;m)}\sup_{T-S\le r\le T} \|v_p(r)-u_p\|_{L^{q*}(D;m)}.
\]
Summing up the above inequalities, we conclude that  for any $x\in D$,
\begin{align}
\label{eq7.uc}
\nonumber |v_p(T,x)-u_p(x)|&\le \hat cMT^{-d/2\alpha}+\beta_Sm(D)\frac{2M\|\varphi_1^D\|^2_\infty}{\lambda_1^D}e^{-\lambda_1^DS}\\
&\quad+ \sup_{x\in D}\|G(x,\cdot)\|_{L^q(D;m)}\sup_{T-S\le r\le T}\|v_p(r)-u_p\|_{L^{q^*}(D;m)}.
\end{align}
Since  $v_p(t)\rightarrow u_p$ in $L^{q^*}(D;m)$ as shown above, we conclude from (\ref{eq7.uc}), by letting $T\to \infty$
and then $S\to \infty$, that  $\|v_p(T)-u_p\|_{\infty}\rightarrow0$ as $T\rightarrow\infty$.
\end{proof}

\subsection{Obstacle problem}

\begin{theorem}
Let $\varphi\in H^{\alpha}_0(D)$, $v$ be a weak solution to \mbox{\rm(\ref{eq1.4})} and $u$ be a weak solution to {\rm{(\ref{eq1.2})}}. Then
\begin{equation}
\label{eq8.6}
\|v(t)-u\|_\infty+\|v(t)-u\|_{H^{\alpha}(D)}\rightarrow 0\quad\mbox{as }t\rightarrow \infty.
\end{equation}
\end{theorem}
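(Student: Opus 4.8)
The plan is to follow the scheme of Proposition~\ref{prop8.5}, with the absorption term $bv_p^p$ of \eqref{eq1.3} replaced by the reaction measure of the obstacle problem, and to exploit that, by (H2), $0\le\varphi\le\mathbb I_{D\setminus\overline D_0}$, so that the penalization energy vanishes at time $0$. \textbf{Step 1 (energy estimates).} Let $v_n\in\WW(0,T)$ solve \eqref{eq7.wcom1}; by Proposition~\ref{prop2.1v}(2) and \cite[Theorem~6.2, Chapter~3]{Lions}, $v_n\to v$. A comparison argument gives $0\le v_n\le w$, where $w$ solves $\frac{dw}{dt}-(\Delta^{\alpha})_{|D}w=aw$, $w(0)=\varphi$, so $\sup_n\|v_n\|_\infty<\infty$. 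Testing \eqref{eq7.wcom1} with $\frac{dv_n}{dt}$ (after the time mollification used in the proof of Proposition~\ref{prop8.5}) and discarding the nonnegative penalization term $n\int_{\{v_n>1\}\cap(D\setminus\overline D_0)}(\frac{dv_n}{dt})^2\,dm$ on the left-hand side, we obtain, for all $T>0$,
\[
\int_0^T\Big\|\frac{dv_n}{dt}(t)\Big\|^2_{L^2(D;m)}\,dt+\frac12\EE_D(v_n(T),v_n(T))\le\frac12\EE_D(\varphi,\varphi)+\frac a2\,m(D)\|w\|^2_\infty=:C,
\]
uniformly in $n$ and $T$, the point being that the initial penalization term $\frac n2\|(\varphi-\mathbb I_{D\setminus\overline D_0})^+\|^2_{L^2(D;m)}$ equals $0$. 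Passing to the limit and using weak lower semicontinuity of $\EE_D$ yields $\int_0^\infty\|\frac{dv}{dt}(t)\|^2_{L^2(D;m)}\,dt<\infty$ and $\sup_{T>0}\EE_D(v(T),v(T))<\infty$; in particular $\frac{dv}{dt}\in L^2(D_T;m_1)$, so by Remark~\ref{rem.eqovsop1}, $v(t)$ satisfies the pointwise-in-time variational inequality there for a.e.\ $t$.

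\textbf{Step 2 (decay and identification of the limit).} Mollifying $v_n$ in time, differentiating \eqref{eq7.wcom1} in $t$, testing with $\frac{dv_n}{dt}$ and again dropping the nonnegative penalization term, gives $y_n(t_2)-y_n(t_1)\le 2a\int_{t_1}^{t_2}y_n(t)\,dt$ for $0<t_1\le t_2$, with $y_n(t)=\|\frac{dv_n}{dt}(t)\|^2_{L^2(D;m)}$; letting $n\to\infty$ this passes to $y(t)=\|\frac{dv}{dt}(t)\|^2_{L^2(D;m)}$ (cf.\ \eqref{eq7.3}). Since also $\int_0^\infty y(t)\,dt<\infty$, Lemma~\ref{lm.vulem} gives $y(t)\to0$, i.e.\ $\frac{dv}{dt}(t)\to0$ in $L^2(D;m)$ as $t\to\infty$. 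By $\sup_T\EE_D(v(T),v(T))<\infty$, \eqref{eq2.1.equiv1} and the compact embedding $H^{\alpha}_0(D)\hookrightarrow L^q(D;m)$ ($q\in[1,2d/(d-2\alpha))$), along some $t_n\to\infty$ we have $v(t_n)\rightharpoonup\bar u$ weakly in $H^{\alpha}_0(D)$ and strongly in $L^q(D;m)$, with $0\le\bar u\le\mathbb I_{D\setminus\overline D_0}$ a.e.\ and $\bar u$ bounded. Passing to the limit along $t_n$ in the variational inequality for $v(t)$, using $\frac{dv}{dt}(t_n)\to0$, the strong $L^q$-convergence and weak lower semicontinuity of $\EE_D$, we get $\EE_D(\bar u,\eta-\bar u)\ge a(\bar u,\eta-\bar u)$ for every $\eta\in H^{\alpha}_0(D)$ with $\eta\le\mathbb I_{D\setminus\overline D_0}$ a.e. Recalling that $a\in(\lambda_1^D,\lambda_1^{D_0})$ (this is forced by the existence of $u$; cf.\ Theorem~\ref{th5.1}(i)), Lemma~\ref{lm7.2} and Theorem~\ref{th6.1} give, for each $t_0>0$, a constant $c>0$ with $c\varphi_1^D\le v_p(t,\cdot)$ for $t\ge t_0$, $p\ge2$; letting $p\to\infty$ yields $c\varphi_1^D\le v(t)$ a.e.\ for $t\ge t_0$, hence $\bar u\ge c\varphi_1^D>0$. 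So $\bar u$ is a weak solution to \eqref{eq1.2}, and by uniqueness (Theorem~\ref{th5.1}(i), \cite{K:arx1}) $\bar u=u$; as the limit is subsequence-independent, $v(t)\rightharpoonup u$ weakly in $H^{\alpha}_0(D)$ and strongly in $L^q(D;m)$ as $t\to\infty$. To get convergence in $H^{\alpha}(D)$, take $\eta=u$ in the variational inequality for $v(t)$ and $\eta=v(t)$ in \eqref{eq3.dvi} for $u$, add and rearrange:
\[
\EE_D(v(t)-u,v(t)-u)\le a\|v(t)-u\|^2_{L^2(D;m)}+\Big\|\frac{dv}{dt}(t)\Big\|_{L^2(D;m)}\|v(t)-u\|_{L^2(D;m)}\longrightarrow0,
\]
so $\|v(t)-u\|_{H^{\alpha}(D)}\to0$ by \eqref{eq2.1.equiv1}.

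\textbf{Step 3 (uniform convergence).} Argue as in the uniform-convergence part of Proposition~\ref{prop8.5}. Write the representation of Proposition~\ref{prop2.1vv} for $v$ and that of Proposition~\ref{prop2.1}(iii) for $u$ (with $u$ serving as its own terminal value, by optional stopping) on the common process $\mathbb X^D$ over $[0,T]$, subtract, and apply the Tanaka--Meyer formula (see \cite[IV.Theorem~70]{Protter}) to $|v^{(T)}(\mathscr X^D_\cdot)-u(X^D_\cdot)|$. Both reaction-measure contributions drop: since $u$ is bounded, $A^{\mu}$ increases only on $\{u=\mathbb I_{D\setminus\overline D_0}\}\subset\{u=1\}\cap(D\setminus\overline D_0)$, where $v^{(T)}\le\mathbb I_{D\setminus\overline D_0}=1=u$, and symmetrically $A^{\nu^{(T)}}$ increases only where $v^{(T)}=1\ge u$; hence $\mathbf 1_{\{v^{(T)}>u\}}\,dA^{\mu}=0$ and $\mathbf 1_{\{u>v^{(T)}\}}\,dA^{\nu^{(T)}}=0$. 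This gives, for $x\in D$,
\[
|v(T,x)-u(x)|\le\mathbb E_{0,x}|\varphi(X^D_T)-u(X^D_T)|+a\,\mathbb E_{0,x}\int_0^T|v(T-r,X^D_r)-u(X^D_r)|\,dr.
\]
The first term is $\le\hat c\,T^{-d/2\alpha}(\|\varphi\|_\infty+\|u\|_\infty)m(D)$ by \eqref{eq.tg.1}. Splitting the integral at $S\in(0,T)$, bounding $\int_S^Tp_D(r,x,\cdot)\,dr$ via \eqref{eq.iuc12u} and $|v-u|$ by $\|v\|_\infty+\|u\|_\infty$, and bounding $\int_0^Sp_D(r,x,\cdot)\,dr\le G_D(x,\cdot)$ together with \eqref{eq.estgfu2} and the $L^{q^*}$-convergence $\|v(r)-u\|_{L^{q^*}(D;m)}\to0$ from Step~2, we let $T\to\infty$ and then $S\to\infty$ to conclude $\|v(T)-u\|_\infty\to0$.

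\textbf{Main obstacle.} The crux is Step~1: the analogues \eqref{eq6.3}, \eqref{eq7.3} of the energy identity and of the time-derivative differential inequality are not directly available for the obstacle problem, because the reaction term is a bounded measure rather than a function. This is precisely what forces us to work at the level of the penalized equations \eqref{eq7.wcom1}, to use that the penalization term is nonnegative and vanishes initially (which relies on $\varphi\le\mathbb I_{D\setminus\overline D_0}$ from (H2)), and to carry these estimates through the penalization limit. Once Step~1 is in place, Steps~2 and~3 follow the template of Proposition~\ref{prop8.5} and Theorem~\ref{th5.1}.
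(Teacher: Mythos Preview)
Your overall strategy is close to the paper's, but you use the penalized approximations $v_n$ of \eqref{eq7.wcom1} where the paper uses the logistic approximations $v_p$; and in Step~3 you apply Tanaka--Meyer directly to the pair $(v,u)$, whereas the paper simply lets $p\to\infty$ in the already established pointwise inequality \eqref{eq7.uc}. Your Step~3 idea is in fact neat and works, once one checks (via Proposition~\ref{prop2.1vv} and the barrier $\hat h$ there, or by testing with $\eta=v+\phi$, $\phi\in C^\infty_c(D_0)$, in Proposition~\ref{prop2.1v}(ii)(b)) that $\nu$ charges only $\{v=1\}\cap((0,T)\times(D\setminus D_0))$; you should say this explicitly rather than just asserting the support property.

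There is, however, a genuine gap in Step~2. The sentence ``letting $n\to\infty$ this passes to $y(t)$'' hides the whole difficulty: from the uniform bound $\sup_n\int_0^\infty y_n<\infty$ and weak convergence $\frac{dv_n}{dt}\rightharpoonup\frac{dv}{dt}$ in $L^2(D_T;m_1)$ you do \emph{not} get $y_n(t)\to y(t)$, nor can you pass the pointwise inequality $y_n(t_2)-y_n(t_1)\le 2a\int_{t_1}^{t_2}y_n$ directly to $y$. The paper handles exactly this point for $y_p$: one sets $k_p(t)=-y_p(t)+2a\int_0^t y_p$, uses monotonicity of $k_p$ and Helly's selection to define a limit $\hat y$, shows $\hat y$ satisfies the same differential inequality (hence $\hat y(t)\to 0$ by Lemma~\ref{lm.vulem}), and finally uses weak lower semicontinuity to get $\|\frac{dv}{dt}(t)\|_{L^2}^2\le\hat y(t)$ a.e.\ (see \eqref{eq.vulem4}--\eqref{eq.vulem9}). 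You need the same construction with $y_n$ in place of $y_p$; without it, your conclusion $y(t)\to 0$ is unjustified.

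A smaller point: in Step~1 the term you call ``the nonnegative penalization term $n\int_{\{v_n>1\}\cap(D\setminus\overline D_0)}(\frac{dv_n}{dt})^2\,dm$'' is in fact what arises in Step~2 (after differentiating in $t$ and testing with $\frac{dv_n}{dt}$). When you test \eqref{eq7.wcom1} itself with $\frac{dv_n}{dt}$, the penalization contributes $\frac{n}{2}\frac{d}{dt}\|(v_n-\mathbb I_{D\setminus\overline D_0})^+\|^2_{L^2(D;m)}$, whose time integral is nonnegative precisely because $(\varphi-\mathbb I_{D\setminus\overline D_0})^+=0$ by (H2). Your conclusion in Step~1 is correct, but the displayed term is wrong.
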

\begin{proof}
We divide the proof into two steps.\\
{\bf Step 1.} Convergence in the energy norm. By (\ref{eq6.3}) and (\ref{eq7.1}),
\begin{align}
\label{eq7.4}
\int_0^s\Big(\frac{dv_p}{dt}\Big)^2\,dt+\frac12\EE_D(v_p(s),v_p(s))\le  \frac12 \EE_D(\varphi,\varphi)+\frac{a}{2}M^2 m(D)+\|b\|_\infty m(D).
\end{align}
Therefore, under the notation of the prof of Proposition \ref{prop8.5},
\begin{equation}
\label{eq.vulem4}
\sup_{p\ge 1}\int_0^\infty y_p(r)\,dr<\infty.
\end{equation}
By \eqref{eq7.3} and Lemma \ref{lm.vulem} for any $\delta>0$,
\begin{equation}
\label{eq.vulem5}
\sup_{p\ge 1}\sup_{t\ge \delta}y_p(t)=:d_\delta<\infty.
\end{equation}
Set $k_p(t):=-y_p(t)+2a\int_0^ty_p(r)\,dr,\, t>0$. By \eqref{eq7.3}, $k_p$ is non-decreasing, and
 \begin{equation}
\label{eq.vulem6}
y_p(t)-y_p(s)=2a\int_s^ty_p(r)\,dr-\int_s^t\,dk_p(r),\quad 0<s<t.
\end{equation}
By \eqref{eq.vulem4}, \eqref{eq.vulem5}, there exist $y\in L^1(0,\infty)$ and non-increasing right-continuous
function $k$ such that
\[
\int_0^ty_p(r)\,dr\to\int_0^ty(r)\,dr,\quad k_p(t)\to k(t),\quad \mbox{a.e.}\,\,\, t>0
\]
Set $\hat y(t):= 2a\int_0^ty(r)\,dr-k(t),\, t>0$. Clearly, $\hat y(t)=\limsup_{p\to \infty}y_p(t)=y(t)$ a.e. $t>0$.
Thus,
 \begin{equation}
\label{eq.vulem7}
\hat y(t)-\hat y(s)=2a\int_s^t\hat y_p(r)\,dr-\int_s^t\,dk_p(r),\quad 0<s<t.
\end{equation}
Hence
\[
\hat y(t)-\hat y(s)\le 2a\int_s^t\hat y(r)\,dr,\quad 0<s<t.
\]
By \eqref{eq.vulem4}, $\hat y\in L^1(0,\infty)$, and so, by Lemma \ref{lm.vulem},
 \begin{equation}
\label{eq.vulem8}
\hat y(t)\to 0,\quad t\to\infty.
\end{equation}
By Theorem \ref{th6.1}(iii),
\[
\int_s^t\int_D\big(\frac{d v}{d t}\big)^2\,dm_1\le\liminf_{p\to\infty} \int_s^t\int_D\big(\frac{d v_p}{d t}\big)^2\,dm_1=\liminf_{p\to\infty}\int_s^ty_p(r)\,dr=\int_s^t\hat y(r)\,dr.
\]
Thus,
 \begin{equation}
\label{eq.vulem9}
\|\frac{d v}{d t}(t)\|^2_{L^2(D;m)}\le \hat y(t),\quad \mbox{a.e.}\,\,\, t>0.
\end{equation}
By Theorem \ref{th6.1} and (\ref{eq7.4}),
$\mbox{ess\,sup}_{t\ge 0}\EE_D(v(t),v(t))<\infty.$
Consequently,  there exist a sequence $\{t_n\}$, and $u\in H^{\alpha}_0(D)$ such that $t_n\rightarrow \infty$, $\frac{d v} {d t}(t_n)\rightarrow 0$
in $L^2(D;m)$, $v(t_n)\rightarrow u$ weakly in $H^{\alpha}_0(D)$ and for every $q\ge1$, $v(t_n)\rightarrow u$ in $L^q(D;m)$.
We may assume that $\{t_n\}$ is chosen so that (cf. Remark \ref{rem.eqovsop1})
\[
\Big(\frac{d v}{d t}(t_n),\eta-v(t_n)\Big)_{L^2(D:m)}+\EE_D(v(t_n),\eta-v(t_n))\ge a(v(t_n),\eta-v(t_n))_{L^2(D;m)}
\]
for every $\eta\in H^{\alpha}_0(D)$ such that $\eta\le\mathbb{I}_{D\setminus\overline D_0}$ a.e. Letting $n\rightarrow\infty$
and using (\ref{eq7.2}) shows that $u$ is a weak solution to (\ref{eq1.2}). By the uniqueness for (\ref{eq1.2}) (see \cite{K:arx1}) and the fact that $v\in \WW(0,T)\subset C([0,T]; L^2(D;m))$, $T\ge 0$, we have $v(t)\rightarrow u$ as $t\rightarrow \infty$ strongly in $L^2(D;m)$. By (\ref{eq6.0a}),
\begin{align*}
&\EE_D(v_p(t)-v_p(s),v_p(t)-v_p(s))\\
&\quad\le a\|v_p(t)-v_p(s)\|^2_{L^2(D;m)}
+\Big\|\frac{d v_p}{d t}(t)-\frac{d v_p}{d t}(s)\Big\|_{L^2(D;m)}\|v_p(t)-v_p(s)\|_{L^2(D;m)}.
\end{align*}
From  (\ref{eq.vulem5}) and Theorem \ref{th6.1}, we infer that $v\in C((0,T]; H^{\alpha}_0(D))$, and  for all $s,t\ge 1$,
\begin{align*}
\EE_D(v(t)-v(s),v(t)-v(s))\le a\|v(t)-v(s)\|^2_{L^2(D;m)}+2d_1\|v(t)-v(s)\|_{L^2(D;m)}.
\end{align*}
From this and already proven properties of $v$,  we conclude that  $v(t)\rightarrow u$ in $H^{\alpha}_0(D)$ as $t\rightarrow \infty$.

{\bf Step 2.} The uniform convergence in (\ref{eq8.6}). By Theorem \ref{th5.1} and Theorem \ref{th6.1}, letting $p\rightarrow\infty$ in (\ref{eq7.uc}) we get
\begin{align*}
|v(T,x)-u(x)|&\le \hat cMT^{-d/2\alpha}+\beta_Sm(D)\frac{2M\|\varphi_1^D\|_\infty}{\lambda_1^D}e^{-S}\\
&\quad+
\sup_{x\in D}\|G_D(x,\cdot)\|_{L^{q}(D;m)}\sup_{T-S\le r\le T}\|v(r)-u\|_{L^{q^*}(D;m)},\quad x\in D.
\end{align*}
for $x\in D$. By the asymptotics proved in the first step, we conclude at once  from the above inequality  that $\|v(T)-u\|_\infty\rightarrow 0$ as $T\rightarrow \infty$.
\end{proof}

\subsection*{Acknowledgements}
{\small This work was supported by Polish National Science Centre
(Grant No. 2017/25/B/ST1/00878).}

\end{document}